\numberwithin{equation}{section}
\newcommand{\diver}{\text{\rm div}}
\newcommand{\R}{{\mathbb R}} 
\newcommand{\p}{\varphi}
\renewcommand{\(}{\left(}
\renewcommand{\)}{\right)}
\newtheorem{proposition}{Proposition}[section]
\newtheorem{theorem}{Theorem}[section]
\newtheorem{lemma}{Lemma}[section]
\newtheorem{corollary}{Corollary}[section]
\newtheorem{remark}{Remark}[section]
\title[Weighted Sobolev inequalities on convex cones]{Sobolev inequalities with jointly concave weights on convex cones}
\author[Zolt\'an M. Balogh, Cristian E. Guti\'errez and Alexandru Krist\'aly]{Zolt\'an M. Balogh, Cristian E. Guti\'errez and Alexandru Krist\'aly}
\thanks{\today}
\thanks{Z. M. Balogh was supported by the Swiss National Science Foundation grant 165507. C. E. Guti\'errez was partially supported by NSF grant DMS--1600578. A. Krist\'aly was supported by the National Research, Development and Innovation Fund of Hungary, financed under the K$\_$18 funding scheme, Project No.  127926.}
\address{Institute of Mathematics \\University of Bern\\Sidlerstrasse 5\\ 3012 Bern, Switzerland}
\email{zoltan.balogh@math.unibe.ch}
\address{Department of Mathematics\\Temple University\\Philadelphia, PA 19122}
\email{gutierre@temple.edu}
\address{Department of Economics\\Babe\c s-Bolyai University\\ Str.\ Teodor Mihali 58-60\\400591
 		Cluj-Napoca, Romania \& Institute of Applied Mathematics, \'Obuda
 		University, B\'ecsi \'ut 96b,  1034 
 		Budapest, Hungary}
\email{alex.kristaly@econ.ubbcluj.ro; kristaly.alexandru@nik.uni-obuda.hu}
\keywords{Sobolev inequalities; homogeneous weights; convex cones; optimal mass transport}
\subjclass[2010]{46E35, 35A23, 47J20}
\date{}
\begin{document}
\begin{abstract}
  Using optimal mass transport arguments, we prove weighted Sobolev inequalities of the form 
$$\(\int_E |u(x)|^q\,\omega(x) \,dx\)^{1/q}\leq  K_0\,\(\int_E |\nabla u(x)|^p\,\sigma(x)\,dx\)^{1/p},\ \ u\in C_0^\infty(\mathbb R^n),\eqno{{\rm (WSI)}}$$
where $p\geq 1$ and $q>0$ is the corresponding Sobolev critical exponent. Here $E\subseteq \mathbb R^n$ is an open convex cone, and $\omega,\sigma:E\to (0,\infty)$ are two homogeneous weights verifying a general concavity-type structural condition.  The constant $K_0= K_0(n, p, q, \omega, \sigma) >0$ is given by an explicit formula. 
Under mild regularity assumptions on the weights, we also prove  that  $K_0$  is optimal in (WSI) if and only if $\omega$ and $\sigma$ are equal up to a multiplicative factor.  Several previously known results, including the cases for monomials and radial weights, are covered by our statement. Further examples and applications to PDEs are also provided.
 
\end{abstract}
\maketitle
\vspace{-1.2cm}

\tableofcontents

\section{Introduction}

 Driven by numerous applications to the calculus of variations and pdes, there is a rich literature of weighted Sobolev inequalities, e.g., Bakry, Gentil and Ledoux  \cite{BGL}, Kufner \cite{Kufner}, and Saloff-Coste \cite{Saloff-Coste}.  Our purpose in this paper is to prove  Sobolev inequalities for two weights of the form
\begin{equation}
\(\int_E |u(x)|^q\,\omega(x) \,dx\)^{1/q}\leq  K_0\,\(\int_E |\nabla u(x)|^p\,\sigma(x)\,dx\)^{1/p}\ \ \textrm{for all}\ u\in C_0^\infty(\R^n), \tag{WSI}\label{WSI}
\end{equation}
 with $K_0>0$ independent on $u\in C_0^\infty(\R^n)$. 
 Here $E\subseteq \mathbb R^n$ is an open convex cone, and $\omega,\sigma:E\to (0,\infty)$ are two homogeneous weights verifying some general concavity-type structural conditions to be described.  
 
There are a few ways to prove inequalities of this type when the weights $\omega$ and $\sigma$ are equal. One recent approach, based on  the ABP method, is due to Cabr\'e,  Ros-Oton and Serra, see \cite{Cabre-Ros-Oton} for monomial weights, and \cite{Cabre-Ros-Oton-Serra} for homogeneous weights. 
A second method used is based on optimal transport and was initiated by Cordero-Erausquin, Nazaret and Villani in \cite{CENV} to show the classical unweighted Sobolev inequalities. 
This second method has been further developed by Nguyen \cite{Nguyen} to deal with the case of monomial weights 
$\omega = \sigma= x_1^{\alpha_1} \ldots x_n^{\alpha_n}$ with $\alpha_i\geq 0$, $i=1,...,n$. In addition, Ciraolo, Figalli and Roncoroni \cite{CFR} recently considered the case of general $\alpha$-homogeneous weights $\omega = \sigma$ with the property that 
$\sigma^{1/\alpha}$ is concave. 

In this paper,  we continue the aforementioned line of research for two different weights $\omega$ and $\sigma$ satisfying a \textit{joint structural concavity condition} and prove  \eqref{WSI} under this assumption using optimal transport. In fact, the study of \eqref{WSI} is motivated by reaction-diffusion problems (see Cabr\'e and  Ros-Oton \cite{Cabre-Ros-Oton-2}) and Sobolev inequalities on Heisenberg groups for  axially symmetric functions (see Section \ref{sec:final comments and open questions}).  Furthermore, the cases considered in \cite{CENV}, \cite{Nguyen} and \cite{CFR} turn out to be particular cases of our results which also contain the results of Castro \cite{Castro} for possible different monomial weights, see Section \ref{sec:examples and applications}.

We begin introducing notation and the general set up. 
Let $n\geq 2$, and let $E\subseteq \mathbb R^n$ be an open convex cone, i.e., an open convex set such that $\lambda x \in E$ for all $\lambda >0$ and $x\in E$; in particular, $0\in \overline E.$
Let $p\geq 1$ and $\omega,\sigma:E\to (0,\infty)$ be two locally integrable weights in $\overline E$, continuous in $E$, and satisfying the homogeneity conditions
\begin{equation}\label{eq:homogeneity of weights}
\omega(\lambda\, x)=\lambda^\tau\,\omega(x),\qquad 
\sigma(\lambda\,x)=\lambda^\alpha\,\sigma(x)\ \ \textrm{for all}\ \lambda>0, x\in E,
\end{equation}
 where the  parameters $\tau,\alpha\in\mathbb R$ verify 
\begin{equation}\label{eq:p-range}
1\leq  p<\alpha+n\leq\tau+p+n,
\end{equation}
and
\begin{equation}\label{eq:p-range-bis}
\alpha \geq \(1-\frac{p}{n}\)\tau.
\end{equation}
Clearly, the local integrability of $\omega$ and $\sigma$ implies that $\tau+n>0$ and $\alpha+n>0,$ respectively. Moreover, \eqref{eq:p-range} implies that $\alpha > -n +1$. 
We remark that both integrals in \eqref{WSI} are considered only on $E$ and the functions $u$ involved need not vanish on $\partial E.$
By scaling, \eqref{WSI} implies the dimensional balance condition  
\begin{equation}\label{eq:dimensional balance}
\dfrac{\tau+n}{q}=\dfrac{\alpha+n}{p}-1.
\end{equation}
The choice of the precise parameter range given by \eqref{eq:p-range} and \eqref{eq:p-range-bis} is not arbitrary; indeed, these ranges are \textit{necessary} for the validity of \eqref{WSI} as it is shown in Section \ref{sec:necessity of the conditions for wsi}. 
From \eqref{eq:dimensional balance} and (\ref{eq:p-range}), we immediately obtain that 
$$q=\frac{p(\tau+n)}{\alpha+n-p}\geq p.$$ 
An important quantity, called fractional dimension $n_a$, is given by  
\begin{equation} \label{eq:fracdim}
\dfrac{1}{n_a}=\dfrac{1}{p}-\dfrac{1}{q}.
\end{equation}
From \eqref{eq:dimensional balance}, the inequality (\ref{eq:p-range-bis}) is equivalent to 
$$n_a\geq n.$$
It may happen that $n_a=+\infty$ which is equivalent to $p=q$, i.e., to $\alpha=p+\tau$.
As usual, denote $p'=\dfrac{p}{p-1}$ for $p>1$, and $p'=+\infty$ when $p=1$.

In addition to the homogeneity assumption \eqref{eq:homogeneity of weights} and necessary conditions \eqref{eq:p-range}-\eqref{eq:dimensional balance}, 
we assume 
that the weights $\omega,\sigma:E\to (0,\infty)$ are differentiable a.e. in $E$ and satisfy either one of the following  joint structural concavity conditions.

\setlist[enumerate,1]{start=0}
\begin{enumerate}[label=\textbf{C-\arabic*}]
	\item\label{item:C_0}: If $n_a>n,$ then there exists a constant $C_0>0$ such that 
	\begin{equation} \label{c-0-condition}
	\(\(\dfrac{\sigma(y)}{\sigma(x)}\)^{1/p} \,\(\dfrac{\omega(x)}{\omega(y)}\)^{1/q}\)^{n_a/(n_a-n)}
	\leq
	C_0\,\(\dfrac{1}{p'}\dfrac{\nabla \omega(x)}{\omega(x)}+\dfrac{1}{p}\dfrac{\nabla \sigma(x)}{\sigma(x)}\)\cdot y
	\end{equation} 
	for a.e. $x\in E$ and for all $y\in E$. 
	\item\label{item:C_1}: If  $n_a=n$, then $\sup_{x\in E}\dfrac{\omega(x)^{1/q}}{\sigma(x)^{1/p}}=:C_1\in (0,\infty)$, and
	
	\begin{equation}\label{c-1-feltetel}
	0
	\leq
	\(\dfrac{1}{p'}\dfrac{\nabla \omega(x)}{\omega(x)}+\dfrac{1}{p}\dfrac{\nabla \sigma(x)}{\sigma(x)}\)\cdot y
	\end{equation}
	for a.e. $x\in E$ and for all $y\in E$.
\end{enumerate}

\noindent We notice that whenever $\omega=\sigma$ is a homogeneous weight of degree $\alpha>0$ and $C_0=\frac{1}{\alpha}$, relation \eqref{c-0-condition} in \ref{item:C_0} turns to be equivalent to the concavity of  $\sigma^{1/\alpha}$, see \cite[Lemma 5.1]{Cabre-Ros-Oton-Serra}. Even more, Proposition \ref{prop-1} reveals an unexpected  rigidity connection between condition  \ref{item:C_0} and the concavity of the weights $\omega$ and $\sigma $ in a limiting case.

Our main results are that under either one of these assumptions \eqref{WSI} holds. Our first main result is then as follows.

	\begin{theorem}\label{main-theorem-1}
	Let $p>1$,  $E\subseteq \mathbb R^n$ be an open convex cone  and weights $\omega,\sigma:E\to (0,\infty)$ satisfying relations \eqref{eq:homogeneity of weights}-\eqref{eq:dimensional balance}, continuous in $E$ and differentiable a.e. in $E$. Then we have
	\begin{itemize}
		\item[{\rm (i)}]   if 
		condition {\rm \ref{item:C_0}}  holds  for some $C_0>0$, then \eqref{WSI} holds
		with
		\[
		K_0=\max\left\{C_0\,\(1-\dfrac{n}{n_a}\),\dfrac{1}{n_a}\right\}\,\,q\,\(\dfrac{1}{p'}+\dfrac{1}{q}\)\inf_{\int_E v(y)\,dy=1, v\in C_0^\infty(\mathbb R^n), v\geq 0}
		\dfrac{\(\int_E v(y)\,|y|^{p'}\,dy\)^\frac{1}{p'}}{\int_E v(y)^{1-\frac{1}{n_a}}\omega\(y\)^{-\frac{1}{q}}\,
			\sigma\(y\)^{\frac{1}{p}}\,dy};
		\]

		\item[{\rm(ii)}] if condition {\rm \ref{item:C_1}} holds for some $C_1>0$, then \eqref{WSI} holds with 
		\[
		K_0=\frac{C_1}{n}\,q\,\(\dfrac{1}{p'}+\dfrac{1}{q}\)\,\inf_{\int_E v(y)\,dy=1, v\in C_0^\infty(\mathbb R^n), v\geq 0}
		\dfrac{\(\int_E v(y)\,|y|^{p'}\,dy\)^\frac{1}{p'}}{\int_E v(y)^{1-\frac{1}{n}}\,dy}.
		\]
	\end{itemize}	
\end{theorem}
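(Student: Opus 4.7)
My plan is to apply the optimal mass transport method of Cordero-Erausquin, Nazaret and Villani \cite{CENV}, in the two-weight adaptation initiated by Nguyen \cite{Nguyen} and Ciraolo-Figalli-Roncoroni \cite{CFR}, with condition \ref{item:C_0} (resp.\ \ref{item:C_1}) serving as the weighted replacement for a one-weight concavity hypothesis. Fix $u\in C_0^\infty(\R^n)$, $u\ge 0$, normalise $\int_E u^q\omega\,dx=1$, and choose a probability density $v\in C_0^\infty(\R^n)$, $v\ge 0$, with $\int v=1$ and compact support in $E$. Let $\varphi$ be the convex Brenier potential whose gradient $T=\nabla\varphi$ pushes $u^q\omega\,dx$ forward to $v\,dy$; the Monge--Amp\`ere equation $u(x)^q\omega(x)=v(T(x))\det DT(x)$ then holds a.e.\ in $E$, and since $E$ is a convex cone with $\mathrm{supp}(v)\subset E$, one has $T(x)\in\overline E$ a.e.\ and $T(x)\cdot\nu(x)\le 0$ for $x\in\partial E$ by the supporting hyperplane property of the cone.

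The core step is to rewrite the denominator of $K_0$ via change of variables and the Monge--Amp\`ere equation:
\[
\int_E v^{1-1/n_a}\omega^{-1/q}\sigma^{1/p}\,dy=\int_E u^{q(1-1/n_a)}\,\omega^{1/p'}(x)\sigma^{1/p}(x)\,B(x,T(x))\,(\det DT(x))^{1/n_a}\,dx,
\]
with $B(x,y):=(\omega(x)/\omega(y))^{1/q}(\sigma(y)/\sigma(x))^{1/p}$, using the identity $1-1/n_a=1/p'+1/q$. I would then bound the integrand pointwise by applying the weighted AM--GM to the $n$ nonnegative eigenvalues of $DT=D^2\varphi$ (weight $1/n_a$ each) together with an auxiliary scalar carrying weight $1-n/n_a$; the clever choice of this scalar is $B(x,T(x))^{n_a/(n_a-n)}$, which yields
\[
B(x,T(x))(\det DT)^{1/n_a}\le\frac{1}{n_a}\,\mathrm{tr}(DT)+\Bigl(1-\frac{n}{n_a}\Bigr)B(x,T(x))^{n_a/(n_a-n)}.
\]
Condition \ref{item:C_0} applied at $(x,T(x))$ bounds the last term by $C_0(1-n/n_a)\,G(x)\cdot T(x)$ with $G:=\frac{1}{p'}\nabla\log\omega+\frac{1}{p}\nabla\log\sigma$. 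Using the product-rule identity $\omega^{1/p'}\sigma^{1/p}G=\nabla(\omega^{1/p'}\sigma^{1/p})$, and the non-negativity of both $\mathrm{tr}(DT)$ (since $D^2\varphi\ge 0$) and $G\cdot T$ (forced by \ref{item:C_0}), the integrand is therefore controlled by
\[
M\,u^{q(1-1/n_a)}\,\mathrm{div}\bigl(\omega^{1/p'}\sigma^{1/p}T\bigr),\qquad M:=\max\bigl\{\tfrac{1}{n_a},\,C_0(1-\tfrac{n}{n_a})\bigr\}.
\]
In case (ii), $n_a=n$ collapses the auxiliary weight and \ref{item:C_1} yields the analogous bound with $M=C_1/n$.

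Integration over $E$ followed by integration by parts generates the boundary term $\int_{\partial E}u^{q(1-1/n_a)}\omega^{1/p'}\sigma^{1/p}T\cdot\nu\,d\mathcal H^{n-1}$, which is nonpositive by the cone geometry and may therefore be discarded. Using $q(1-1/n_a)-1=q/p'$ and $q(1-1/n_a)=q(1/p'+1/q)$, one arrives at
\[
\int_E v^{1-1/n_a}\omega^{-1/q}\sigma^{1/p}\,dy\le M\,q\Bigl(\tfrac{1}{p'}+\tfrac{1}{q}\Bigr)\int_E u^{q/p'}|\nabla u|\,|T|\,\omega^{1/p'}\sigma^{1/p}\,dx.
\]
A single application of H\"older's inequality with exponents $p$ and $p'$ splits the right-hand side into $\bigl(\int_E|\nabla u|^p\sigma\,dx\bigr)^{1/p}\bigl(\int_E u^q\omega|T|^{p'}\,dx\bigr)^{1/p'}$, and the pushforward identity $\int_E u^q\omega|T|^{p'}\,dx=\int_E v|y|^{p'}\,dy$ converts the second factor into the numerator in the definition of $K_0$. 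Combining with the normalisation $\|u\|_{L^q(\omega)}=1$ and taking the infimum over admissible $v$ yields \eqref{WSI} with the claimed constant.

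The principal obstacle is the clever choice of the auxiliary scalar in the weighted AM--GM: only the specific value $B(x,T(x))^{n_a/(n_a-n)}$ produces a residual term matching the left-hand side of \ref{item:C_0} exactly, and the appearance of $\max\{1/n_a,\,C_0(1-n/n_a)\}$ reflects the two different constants attached to the two summands assembled into the single divergence $\mathrm{div}(\omega^{1/p'}\sigma^{1/p}T)$. A further technical subtlety is that $\varphi$ is merely convex, so $D^2\varphi$ must be understood in Alexandrov's sense and the distributional divergence of $T$ dominates the trace of its absolutely continuous part; this preserves the direction of the integration-by-parts inequality. Finally, the passage from $u$ compactly supported in $E$ to arbitrary $u\in C_0^\infty(\R^n)$ is handled by a standard approximation argument using the continuity of the weights in $E$ and the convex-cone geometry.
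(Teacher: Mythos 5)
Your proposal is correct and follows essentially the same route as the paper: Brenier map for $u^q\omega\,dx\to v\,dy$, rewriting the denominator of $K_0$ via the Monge--Amp\`ere equation, a weighted AM--GM whose auxiliary term is exactly the left-hand side of \ref{item:C_0} evaluated at $(x,\nabla\varphi(x))$, assembly into $\mathrm{div}_A(\omega^{1/p'}\sigma^{1/p}\nabla\varphi)$ with the constant $\max\{1/n_a,\,C_0(1-n/n_a)\}$, integration by parts with the cone-boundary sign, H\"older, and the pushforward identity for $\int v|y|^{p'}dy$. The only cosmetic difference is that you perform the AM--GM in one step on $n+1$ quantities, whereas the paper first uses $\det D^2_A\varphi\le(\Delta_A\varphi/n)^n$ and then a two-term Young inequality; these are equivalent.
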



The proof of this theorem is based on optimal transport arguments \`a la  Cordero-Erausquin,  Nazaret and Villani \cite{CENV}.
The statement of the theorem is general enough to cover several well-known results and flexible enough to apply to new cases as well. A well-known Sobolev inequality for radial weights of the form $\omega(x) = |x|^{\tau}$ and $\sigma(x) = |x|^{\alpha}$ (see  Caffarelli, Kohn and Nirenberg  \cite{CKN}) follows as a corollary of this theorem. 
 Considering equal weights  $\omega=\sigma$ in Theorem \ref{main-theorem-1}/(i) we recover the isotropic weighted Sobolev inequality in \cite[Appendix A]{CFR} and \cite{Nguyen} when $\omega=\sigma=w$ is a monomial weight. 
When $\omega$ and $\sigma$ are monomial weights not necessarily equal, Theorem \ref{main-theorem-1} contains also the main result of Castro \cite{Castro}, providing in addition an explicit Sobolev constant in \eqref{WSI}. 
Moreover, our setting allows that some parameters $\tau_i\in \mathbb R$ 
in the monomial $\omega(x_1,...,x_n)=x_1^{\tau_1}\cdots x_n^{\tau_n}$ can take negative values, 
which is an unexpected phenomenon that does not appear in the papers \cite{Cabre-Ros-Oton, CFR, Nguyen}.

When $p=1$, with a proof similar to that of Theorem \ref{main-theorem-1}, we obtain isoperimetric-type inequalities for two weights. In this case,  we have $\frac{1}{n_a}+\frac{1}{q}=1$ and $\frac{1}{p'}=0$, and both conditions \ref{item:C_0} and \ref{item:C_1} are understood with these values; 
see \eqref{degenerate-C-0} and the end of the proof of Lemma \ref{lm:condition for two weights}. 
 For further use, let $B:=\{x\in \mathbb R^n:|x|\leq 1\}$. 
Our second main result is then the following.

\begin{theorem}\label{main-theorem-2} 
	Let $p=1$,  $E\subseteq \mathbb R^n$ be an open convex cone  and weights $\omega,\sigma:E\to (0,\infty)$ satisfying relations \eqref{eq:homogeneity of weights}-\eqref{eq:dimensional balance}, continuous in $E$ and differentiable a.e. in $E$. Then we have
	\begin{itemize}
		\item[{\rm (i)}]   if 
		condition {\rm \ref{item:C_0}}  holds  for some $C_0>0$, then \eqref{WSI} holds
		with
		\[
		K_0=\max\left\{C_0\,\(1-\dfrac{n}{n_a}\),\dfrac{1}{n_a}\right\}
		\dfrac{\(\int_{B\cap E} \omega(y)\,dy\)^{1-\frac{1}{n_a}}}{\int_{B\cap E} 
			\sigma\(y\)\,dy};
		\]
		\item[{\rm (ii)}] if condition {\rm \ref{item:C_1}} holds for some $C_1>0$, then \eqref{WSI} holds with 
		\[
		K_0=\frac{C_1}{n}\,\dfrac{\(\int_{B\cap E} \omega(y)\,dy\)^{1-\frac{1}{n}}}{\int_{B\cap E} 
			\omega\(y\)^{1-\frac{1}{n}}\,dy}.
		\]
	\end{itemize}
	Moreover, inequality \eqref{WSI} extends to functions with $\sigma$-bounded variation on $E$.	
\end{theorem}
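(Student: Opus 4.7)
\textbf{Proof plan for Theorem \ref{main-theorem-2}.}

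The plan is to imitate the proof of Theorem \ref{main-theorem-1} at $p=1$, exploiting two simplifications specific to this exponent: the H\"older duality step vanishes, and the test density in the optimal-transport argument can be taken \emph{explicit} (supported on $B\cap E$) rather than obtained via an infimum. Fix $u\in C_0^\infty(\mathbb R^n)$ with $u\ge 0$ and normalize so that $\int_E u^q\omega\,dx=1$. Introduce the source probability density $f(x)=u(x)^q\omega(x)$ on $E$ and pick as target
$$g(y)=\frac{\omega(y)\,\chi_{B\cap E}(y)}{\int_{B\cap E}\omega\,dz}\quad \text{in case (i)},\qquad g(y)=\frac{\omega(y)^{1-\frac{1}{n}}\,\chi_{B\cap E}(y)}{\int_{B\cap E}\omega^{1-\frac{1}{n}}\,dz}\quad \text{in case (ii)}.$$
These choices are motivated by the following heuristic: if one formally inserts $v=\omega\,\chi_{B\cap E}/\int_{B\cap E}\omega$ into the expression for $K_0$ of Theorem \ref{main-theorem-1} and sends $p\to 1^+$, one has $p'\to\infty$, $\big(\int v|y|^{p'}\big)^{1/p'}\to 1$ and $q(1/p'+1/q)\to 1$, and the resulting constant matches exactly the one announced in Theorem \ref{main-theorem-2}. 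Case (ii) is analogous with $v^{1-1/n}$ in the denominator.

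Next I would invoke Brenier's theorem to produce a convex potential $\varphi$ whose gradient $T:=\nabla\varphi$ pushes $f$ forward to $g$; in particular $T(x)\in\overline{B\cap E}$, and so $|T(x)|\le 1$ for $f$-a.e.\ $x$. The Monge--Amp\`ere identity $f(x)=g(T(x))\det D_A T(x)$, combined with the arithmetic--geometric mean inequality on the eigenvalues of $D_AT$ (which gives $(\det D_AT)^{1/n_a}\le \mathrm{tr}(D_AT)/n_a$) and the degenerate $p=1$ version of the structural hypothesis \ref{item:C_0} or \ref{item:C_1} (as indicated by \eqref{degenerate-C-0}), reproduces essentially verbatim the pointwise estimate that drives the proof of Theorem \ref{main-theorem-1}. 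Integrating over $E$ and performing an integration by parts (the boundary term vanishes by the convex-cone geometry and the compact support of $u$) yields
$$1=\int_E u^q\omega\,dx\le K_0\int_E \nabla u(x)\cdot T(x)\,\sigma(x)\,dx,$$
with the $K_0$ from the statement. The dualization step now trivializes: the pointwise bound $\nabla u\cdot T\le|\nabla u|\,|T|\le|\nabla u|$ (which at $p>1$ would require H\"older's inequality) immediately yields \eqref{WSI}.

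Finally, the extension to $u\in BV_\sigma(E)$ follows by the standard density of $C_0^\infty(\mathbb R^n)\cap BV_\sigma(E)$ in $BV_\sigma(E)$ and the lower semicontinuity of the $\sigma$-weighted total variation with respect to $L^q(\omega)$ convergence. The main obstacle in this program is the bookkeeping at $p=1$: one must carefully interpret \ref{item:C_0} and \ref{item:C_1} in the regime $1/p'=0$, $1-1/n_a=1/q$, and check that no step in the proof of Theorem \ref{main-theorem-1} silently divides by $p-1$ or by a quantity degenerating as $p\to 1^+$. This is exactly what the parenthetical reference to \eqref{degenerate-C-0} in the statement flags, and verifying it is the technical core of the argument.
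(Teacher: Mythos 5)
Your overall strategy is exactly the paper's: normalize $u$, transport $u^q\omega\,dx$ onto an explicit probability density supported on $B\cap E$, run the Monge--Amp\`ere/AM--GM/divergence estimate of Lemma \ref{lm:condition for two weights} in its degenerate $p=1$ form, integrate by parts dropping the boundary term by the cone geometry, and replace the H\"older step by the trivial bound $\nabla u\cdot\nabla\phi\le |\nabla u|$ since $|\nabla\phi|\le 1$ on the support of the target. Case (i) is correct, including your consistency check that plugging $v=\omega\,\chi_{B\cap E}/\int_{B\cap E}\omega$ into the Theorem \ref{main-theorem-1} constant reproduces the announced $K_0$ (indeed $\int v^{1-1/n_a}\omega^{-1/q}\sigma\,dy=\int_{B\cap E}\sigma\,dy\big/\big(\int_{B\cap E}\omega\,dy\big)^{1-1/n_a}$ because $1-\tfrac{1}{n_a}=\tfrac1q$ when $p=1$).

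The one genuine error is your target density in case (ii). With $g=\omega^{1-1/n}\chi_{B\cap E}\big/\int_{B\cap E}\omega^{1-1/n}$ the argument produces
\[
K_0=\frac{C_1}{n}\,\frac{\(\int_{B\cap E}\omega^{1-\frac1n}\,dy\)^{1-\frac1n}}{\int_{B\cap E}\omega^{(1-\frac1n)^2}\,dy},
\]
since the quantity that ends up in the denominator is $\int_E g(y)^{1-1/n}\,dy$, not $\int_E g(y)^{1-1/n}$ with the weight cancelled. This is a perfectly valid Sobolev constant (any admissible target yields one), but it is not the constant asserted in Theorem \ref{main-theorem-2}/(ii), so as written your proof does not establish the statement in case (ii). The fix is simply to keep the \emph{same} target $v=\omega\,\chi_{B\cap E}/\int_{B\cap E}\omega$ as in case (i); then, using $n_a=n$,
\[
\int_E v(y)^{1-\frac1n}\,dy=\frac{\int_{B\cap E}\omega(y)^{1-\frac1n}\,dy}{\(\int_{B\cap E}\omega(y)\,dy\)^{1-\frac1n}},
\]
and $K_0=\frac{C_1}{n}\big/\int_E v^{1-1/n}\,dy$ is exactly the announced constant. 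Everything else in your plan, including the $BV_\sigma$ extension by density and lower semicontinuity, matches the paper.
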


This statement covers the main results in \cite{Cabre-Ros-Oton-Serra} on weighted isoperimetric inequalities when $\omega = \sigma$. To be more precise, let us introduce a few definitions to conclude from Theorem \ref{main-theorem-2} isoperimetric inequalities.
 A function $f:\mathbb R^n\to\mathbb R$ has $\sigma$-bounded variation on $E$ if 
 $$V_\sigma(f,E)=\sup\left\{\int_{E} f(x){\rm div}(\sigma(x) X(x))dx:X\in C_0^1(E,\mathbb R^n),|X(x)|\leq 1,\forall x\in E\right\}<+\infty.$$
Let $BV_\sigma(\mathbb R^n)$ be the set of these functions. It is clear that $\dot W_\sigma^{1,1}(\mathbb R^n)\subset BV_\sigma(\mathbb R^n)$ and for every  $u\in \dot W_\sigma^{1,1}(\mathbb R^n)$, we have $$\int_{E} |\nabla u(x)|\sigma(x)dx=V_\sigma(u,E).$$
Here for each  $p\geq 1$,  
$\dot W_\sigma^{1,p}(\mathbb R^n)$ denotes the set of  all measurable functions $u:\mathbb R^n\to \mathbb R$ such that the level sets $\{x\in E:|u(x)|>s\},$ $s>0,$ have finite $\sigma$-measure and  $|\nabla u|\big|_E\in L_\sigma^p(E)$, the space of functions that are $p$-th integrable with respect to $\sigma$ in $E$.

A measurable set $\Omega\subset \mathbb R^n$ has $\sigma$-bounded variation on $E$ if $\mathbbm{1}_\Omega\in BV_\sigma(\mathbb R^n)$, and its weighted perimeter with respect to the convex cone $E$ is given by
$$ P_\sigma(\Omega,E)=V_\sigma(\mathbbm{1}_\Omega,E).$$
  The conclusions of Theorem \ref{main-theorem-2} can be then reformulated in terms of  weighted isoperimetric inequalities, i.e., for any set $\Omega\subset \mathbb R^n$ having $\sigma$-bounded variation on $E$, one has   
  \begin{equation}\label{isoperimetric-ineq}
  K_0^{-1}\(\int_{\Omega\cap E}\omega(x)dx\)^{1-\frac{1}{n_a}}\leq P_\sigma(\Omega,E),
  \end{equation}
where $K_0>0$ is the constant given by Theorem \ref{main-theorem-2}.
When $\omega=\sigma$,  \eqref{isoperimetric-ineq} 
is the sharp weighted isoperimetric inequality of \cite{Cabre-Ros-Oton-Serra}, and \cite{Nguyen} in the monomial case. Moreover, for different monomial weights we recover from \eqref{isoperimetric-ineq} the results of Abreu and Fernandes \cite{AF}.

The next question considered is to describe the \textit{equality} cases in Theorems \ref{main-theorem-1} and \ref{main-theorem-2}. As expected, the candidates for extremal functions belong to $\dot W_\sigma^{1,p}(\mathbb R^n)$ rather than to $C_0^\infty(\mathbb R^n)$. Therefore, we may assume that  \eqref{WSI} is extended to functions in $\dot W_\sigma^{1,p}(\mathbb R^n)$. The equality cases in Theorems \ref{main-theorem-1} and \ref{main-theorem-2} are described in the following result.

\begin{theorem}\label{main-theorem-1-equality}
	Let $p\geq 1$, $E\subseteq \mathbb R^n$ be an open convex cone and weights $\omega,\sigma:E\to (0,\infty)$ satisfying relations \eqref{eq:homogeneity of weights}-\eqref{eq:dimensional balance}, continuous in $E$, differentiable a.e. in $E$, and one of them locally Lipschitz in $E.$
	 Then we have:
	\begin{itemize}
		\item[{\rm (i)}]   if 
		condition {\rm \ref{item:C_0}}  holds  for some $C_0>0$ and $n_a<+\infty$, then there exist nonzero extremal functions in \eqref{WSI} $($with the constant $K_0$ in Theorem {\rm \ref{main-theorem-1}/(i) or Theorem \ref{main-theorem-2}/(i)}$)$ if and only if $\omega$ and $\sigma$ are equal up to a multiplicative factor, $\sigma^\frac{1}{\alpha}$ is concave, and $C_0=\frac{1}{n_a-n}$; 
		\item[{\rm (ii)}]   if 
		condition {\rm \ref{item:C_0}}  holds and $n_a=+\infty$, there are no extremal functions in \eqref{WSI};
		\item[{\rm(iii)}] if condition {\rm \ref{item:C_1}} holds for some $C_1>0$, then there exist nonzero extremal functions in \eqref{WSI} $($with the  constant $K_0$ in Theorem {\rm \ref{main-theorem-1}/(ii) or Theorem \ref{main-theorem-2}/(ii)}$)$ if and only if both weights are constant, i.e., $\omega\equiv c_\omega>0$ and $\sigma\equiv c_\sigma>0$ with $c_\omega^\frac{1}{q}=C_1c_\sigma^\frac{1}{p}$. 		
	\end{itemize}
	
\end{theorem}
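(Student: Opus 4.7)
The plan is to trace back through the optimal-transport proof of Theorems \ref{main-theorem-1} and \ref{main-theorem-2} and identify the rigidity conditions imposed on $(\omega,\sigma,C_0)$ by the existence of a nonzero extremal. That proof chains three inequalities, each of which must become equality a.e.\ on the support of an extremal: (a) an arithmetic--geometric-mean step on the eigenvalues of $D^2\varphi$, where $T = \nabla\varphi$ is the Brenier map transporting $|u|^q\omega$ onto a reference density; (b) the joint structural condition \ref{item:C_0} or \ref{item:C_1} evaluated at $(x, T(x))$; and (c) a H\"older step arising from the weighted divergence theorem on the cone, which produces no boundary contribution thanks to the cone geometry.

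\textbf{The ``if'' direction in (i).} When $\omega = c_0 \sigma$ with $\sigma^{1/\alpha}$ concave, \cite[Lemma 5.1]{Cabre-Ros-Oton-Serra} shows that \ref{item:C_0} holds with $C_0 = 1/(n_a-n)$, and \eqref{WSI} collapses to the one-weight inequality of \cite{CFR, Nguyen}. The extremals there, namely $u_\ast(x) = (1+|x|^{p'})^{-(n_a-p)/p}$ for $p>1$ and $u_\ast = \mathbbm{1}_{B \cap E}$ for $p=1$, saturate \eqref{WSI} with the constants of Theorem \ref{main-theorem-1}(i) and Theorem \ref{main-theorem-2}(i), giving the claimed existence.

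\textbf{The ``only if'' direction in (i).} Let $u \not\equiv 0$ be an extremal. A direct computation using \eqref{eq:homogeneity of weights} and \eqref{eq:dimensional balance} gives $\alpha/p - \tau/q = (n_a-n)/n_a$, so both sides of \eqref{c-0-condition} are $1$-homogeneous in $y$. Equality in (a) forces $D^2\varphi \equiv \lambda I$ a.e., and the standard integration argument then pins $\lambda$ to be constant and $T$ to be (up to a translation that is killed by matching the cone support) a homothety around the origin. Equality in (b) along the resulting graph $\{(x,\lambda x)\}$, as $x$ sweeps the support of $u$, then forces the $1$-homogeneous function $\bigl(\sigma(y)^{1/p}\omega(y)^{-1/q}\bigr)^{n_a/(n_a-n)}$ to agree on $E$ with the linear map $y \mapsto C_0\bigl(\tfrac{1}{p'}\nabla\omega/\omega + \tfrac{1}{p}\nabla\sigma/\sigma\bigr)\cdot y$. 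Combined with the homogeneity degrees $\tau$ and $\alpha$ this pins $\omega = c_0\sigma$ on $E$. Once the weights are proportional, the rigidity half of \cite[Lemma 5.1]{Cabre-Ros-Oton-Serra} identifies $C_0 = 1/(n_a-n)$ and the concavity of $\sigma^{1/\alpha}$. The local-Lipschitz hypothesis on one of the weights is used to justify the chain rule and the divergence theorem in the ambient Sobolev class $\dot W^{1,p}_\sigma(\mathbb R^n)$.

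\textbf{Parts (ii) and (iii).} For (ii), when $n_a = +\infty$ (equivalently $p = q$), the minimizing sequences for the Sobolev constant in Theorem \ref{main-theorem-1}(i) concentrate at the origin or escape to infinity, so no nonzero $\dot W^{1,p}_\sigma$ function can attain the bound. For (iii), equality in \eqref{c-1-feltetel} must hold for a.e.\ $x\in E$ and every $y\in E$; since $E$ is an open cone, this forces $\tfrac{1}{p'}\nabla\omega/\omega + \tfrac{1}{p}\nabla\sigma/\sigma \equiv 0$ a.e., and Euler's identity together with \eqref{eq:homogeneity of weights} then yields $\tau = \alpha = 0$, so both weights are constant. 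The relation $c_\omega^{1/q} = C_1 c_\sigma^{1/p}$ follows from saturation of the supremum defining $C_1$. The main obstacle will be the rigidity step in (i): promoting equality in \ref{item:C_0} along the graph of an extremizing Brenier map into global proportionality of $\omega$ and $\sigma$ on $E$. The key point is that, once AM--GM equality constrains $T$ to be a homothety, the variation of both $x$ and the homothety ratio sweeps an open subset of $E\times E$, turning a one-parameter family of pointwise equalities into a global structural identity on the cone.
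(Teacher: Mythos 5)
The central gap is in your ``only if'' argument for (i). Equality in condition \ref{item:C_0} along the transport map holds only at the pairs $(x,\nabla\phi(x))=(x,\lambda x+x_0)$, i.e.\ on an $n$-dimensional graph inside the $2n$-dimensional product $E\times E$; your closing claim that varying $x$ ``and the homothety ratio'' sweeps an open subset of $E\times E$ is false, since $\lambda$ (and $x_0$) are fixed by the given extremal, so you never obtain the asserted identity between the $1$-homogeneous function $\bigl(\sigma(y)^{1/p}\omega(y)^{-1/q}\bigr)^{n_a/(n_a-n)}$ and a linear form on all of $E$. The paper's mechanism is entirely different and is the heart of the proof: equality along the graph, combined with Euler's identity and the requirement $C_0\,(1-n/n_a)=1/n_a$ coming from the $\max$ in \eqref{eq:divergence estimate two weights}, only pins $C_0=\frac{1}{n_a-n}$ and the ordering $\tau\le\alpha$. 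The proportionality $\omega=c\,\sigma$ is then extracted from the \emph{inequality} \ref{item:C_0} itself (valid for a.e.\ $x$ and \emph{all} $y\in E$) at this critical constant: one shows $\alpha=\tau$, observes that the function $g_x(y)$ built from \eqref{c-0-reduced-bis-bis} is nonnegative on $E$ and vanishes at the interior point $y=x$, and the first-order condition $\nabla g_x(x)=0$ forces $\nabla(\omega/\sigma)=0$ a.e.; the local Lipschitz hypothesis (via the concavity of the auxiliary function $h$ in \eqref{h-function-def}, not merely to ``justify the chain rule'') upgrades this to $\omega/\sigma$ constant. This is Proposition \ref{prop-1}, which your proposal has no substitute for. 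A secondary inaccuracy: the translation $x_0$ is \emph{not} killed by the cone geometry; it survives in the extremals $u_\gamma(x)=(\gamma+|x+x_0|^{p'})^{-(n+\alpha-p)/p}$ and is only constrained by $\nabla\omega(x)\cdot x_0=0$.

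Part (ii) is also not proved by your argument: the assertion that minimizing sequences ``concentrate at the origin or escape to infinity'' is an unproven claim requiring a separate compactness analysis. The paper's route is direct and elementary: when $n_a=+\infty$ equality forces $\Delta_A\phi=0$ a.e., hence $\nabla\phi$ constant, which is incompatible with the Monge--Amp\`ere equation \eqref{eq:MA equation two weights-1-0}. Your sketch of (iii) reaches the right conclusion but asserts equality in \eqref{c-1-feltetel} ``for every $y\in E$'', which is not available (equality holds only at $y=\nabla\phi(x)$); the paper instead uses saturation of the supremum defining $C_1$ to get $\omega^{1/q}=C_1\sigma^{1/p}$ pointwise, which couples the two gradients and, together with $\tau=\alpha=0$ and the cone structure, yields $\nabla\omega=0$.
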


Theorem \ref{main-theorem-1-equality} follows by a careful analysis of the equality cases in the proof of Theorems \ref{main-theorem-1} and \ref{main-theorem-2}. 
Besides the regularity properties of the optimal transport map -- similar to those in \cite{CENV} (see also \cite{Nguyen} when the weights are two equal monomials) -- the main novelty in our argument is a rigidity phenomenon showing up from conditions  \ref{item:C_0} and \ref{item:C_1} which implies that the weights $\omega$ and $\sigma$ are \textit{equal} up to a multiplicative factor.  For a technical reason, in order to establish  Theorem \ref{main-theorem-1-equality}, our argument requires further regularity on the weights with respect to Theorems \ref{main-theorem-1} and \ref{main-theorem-2}, 
that is, one of them is assumed to be locally Lipschitz.
On one hand, Theorem \ref{main-theorem-1-equality} shows in a certain sense the limits of our approach. 
In particular, no characterization can be provided for the equality cases in axially symmetric Sobolev inequalities on the Heisenberg group $\mathbb H^1$, since in that case $\omega/\sigma\neq$constant (see Section \ref{sec:final comments and open questions}). 
On the other hand, Theorem \ref{main-theorem-1-equality} shows that the results from \cite{Cabre-Ros-Oton-Serra}, \cite{CFR} and \cite{Nguyen} are optimal in the sense that the only reasonable scenario to obtain sharp \eqref{WSI} inequalities with the constants given above is when the two weights are constant multiples of each other. 
The difference between the cases $p>1$ and $p=1$ in Theorem \ref{main-theorem-1-equality}/(i) and (iii) appears in the shape of the extremal functions. In the former case it is Talenti-type radial function (independently on the weight), while in the latter case is the indicator function of $B\cap E$.

 We complete this introduction summarizing the organization of the paper. 
In Section \ref{sec:proof of main theorems} we prove Theorems \ref{main-theorem-1} and  \ref{main-theorem-2}. Section \ref{sec:discussion of conditions} begins with a discussion concerning a concavity rigidity arising from condition \ref{item:C_0}, and then we provide the proof of Theorem \ref{main-theorem-1-equality}.  
In Section \ref{sec:examples and applications} we give various examples and applications of our results. 
In particular, examples of pairs of weights $(\omega,\sigma)$ satisfying conditions \ref{item:C_0} and \ref{item:C_1} are given in Section \ref{subsection5-1} showing that several known results are simple corollaries of Theorems \ref{main-theorem-1} and \ref{main-theorem-2}. In Section \ref{eigenvalue-1} we provide some applications by estimating the spectral gap in a weighted eigenvalue problem and discuss the existence of nontrivial weak solution for a weighted PDE. 
Finally, in Section \ref{sec:necessity of the conditions for wsi}, we show that \eqref{eq:p-range}--\eqref{eq:dimensional balance} are necessary conditions for the validity of \eqref{WSI}, and next in Section \ref{sec:final comments and open questions}
we establish the relation between \eqref{WSI} and Sobolev inequalities in the Heisenberg group. We finish the paper with final comments and open questions.\\


{\bf Acknowledgements:} We thank Alessio Figalli and Xiao Zhong for several motivating conversations on this subject.
C. E. G. and A. K. wish to thank Zolt\'an Balogh for his kind invitation to the University of Bern during part of the Fall of 2019, and they are extremely grateful for the wonderful hospitality and support.

  \section{Proof of Theorems \ref{main-theorem-1} and \ref{main-theorem-2} }\label{sec:proof of main theorems}
 
We start this section with  some preliminary remarks on conditions  
\ref{item:C_0} and \ref{item:C_1}. 
Let us notice, that from Euler's theorem for homogeneous functions, one has 
$\nabla \omega (x) \cdot x = \tau \omega(x) $ and $\nabla \sigma (x) \cdot x = \alpha \sigma(x) $ for a.e. $x \in E$. 
Picking $y=x\in E$ in \ref{item:C_0} yields $1\leq C_0\(\frac{\tau}{p'}+\frac{\alpha}{p}\),$
implying that if \ref{item:C_0} holds, then at least one of the parameters $\tau$ or $\alpha$ must be strictly positive. 
Clearly, \ref{item:C_1} holds for constant weights.

\begin{remark}\rm \label{remark-1}
	(i) 
Using \eqref{eq:dimensional balance} and \eqref{eq:fracdim}, condition \ref{item:C_0} can be written  in terms of $\alpha$ and $\tau$ as follows
	\begin{equation} \label{eq:explicit-c-0}  \left( \left(\frac{\sigma(y)}{\sigma(x)}\right)^{\tau +n} \left( \frac{\omega(x)}{\omega(y)}\right)^{\alpha +n -p}\right )^{\frac{1}{n(\alpha-\tau) + p \tau}} \leq 
	C_0\,\(\dfrac{1}{p'}\dfrac{\nabla \omega(x)}{\omega(x)}+\dfrac{1}{p}\dfrac{\nabla \sigma(x)}{\sigma(x)}\)\cdot y,
	\end{equation}
for a.e. $x\in E$ and all $y\in E$.	
	
	(ii) When $n_a=+\infty$ (i.e., $p=q$, which is equivalent to $\alpha=p+\tau$), from (i), it is easy to see, that condition \ref{item:C_0} takes the form 
	\begin{eqnarray}\label{c-0-reduced-0}
	\(\dfrac{\sigma(y)}{\sigma(x)}\dfrac{\omega(x)}{\omega(y)}\)^{1/p} 
	\leq
	C_0\,\(\dfrac{1}{p'}\dfrac{\nabla \omega(x)}{\omega(x)}+\dfrac{1}{p}\dfrac{\nabla \sigma(x)}{\sigma(x)}\)\cdot y\ 	\ \ {\rm for\ a.e.}\ x\in E\ {\rm  and\ all}\ y\in E.
	\end{eqnarray}

(iii) When $n_a\to n$ in condition \ref{item:C_0}, the only reasonable relation we obtain is precisely (\ref{c-1-feltetel}) in condition \ref{item:C_1}. Indeed, if we fix $x,y\in E$ such that $\dfrac{\omega(x)^{1/q}}{\sigma(x)^{1/p}}<\dfrac{\omega(y)^{1/q}}{\sigma(y)^{1/p}}$ then the left hand side of \eqref{c-0-condition} tends to 0 whenever $n_a\to n$.

	(iv) When $n_a=n$,  \eqref{eq:dimensional balance} implies 
	$\dfrac{\tau}{q}=\dfrac{\alpha}{p}$, and so by \eqref{eq:homogeneity of weights} the function $\dfrac{\omega^{1/q}}{\sigma^{1/p}}$ is homogeneous of degree zero. Thus, the constant $C_1$ in condition \ref{item:C_1} equals
$$C_1 := \sup_{x\in E \cap \mathbb S^{n-1} }\frac{\omega(x)^{1/q}}{\sigma(x)^{1/p}} < \infty.$$
In spite of the fact that $\dfrac{\omega^{1/q}}{\sigma^{1/p}}$ is homogeneous of degree zero, the last condition is not automatically satisfied; indeed, the function $(x_1, x_2) \mapsto \frac{x_1}{x_2}$ is 0-homogeneous in $E=(0,\infty)^2$ but it certainly blows up when $x_2 \to 0^+$.
\end{remark}

{
	

}
 
 \subsection{Weighted divergence type inequalities}
 	\setcounter{equation}{0}
 The proof of Theorems \ref{main-theorem-1} and 	\ref{main-theorem-2} are based on a pointwise divergence type inequality stated in the following lemma. 
 	Let us recall that if $\phi:\mathbb R^n\to \mathbb R$ is a convex function, $D^2_A\phi$ denotes its Hessian in the sense of Alexandrov, i.e., the absolutely continuous part of the distributional Hessian of $\phi$, see e.g. Villani \cite{Villani}. In the same sense, let $\Delta_A \phi={\rm tr}D^2_A\phi$ be the Laplacian and for $f\in C^1(\mathbb R^n)$, let $\diver_A(f\nabla \phi)=\nabla f\cdot \nabla \phi+f\Delta_A \phi.$

 	\begin{lemma}\label{lm:condition for two weights}
 		Let $\omega,\sigma:E\to (0,\infty)$ be weights satisfying
 		\eqref{eq:homogeneity of weights}-\eqref{eq:dimensional balance}, continuous in $E$ and differentiable a.e. in $E$. Let $\phi:\mathbb R^n\to \mathbb R$ be a convex function such that $\nabla \phi(E)\subseteq E$.

Then we have 

\begin{itemize}
	\item[{\rm (i)}] If 
	{\rm \ref{item:C_0}} holds with $C_0>0$, then for a.e. $x\in E$ one has
		$$\omega(x)^{1-\frac{1}{n_a}}\,\omega\(\nabla \phi(x)\)^{-1/q}\,
	\sigma\(\nabla \phi(x)\)^{1/p}
	\,
	\(\det D^2_A\phi(x)\)^{1/n_a}
	\leq 
	\tilde C_0\,
	\diver_A\(\omega(x)^{1/p'}\sigma(x)^{1/p}\,\nabla \phi\),$$
with 
\begin{equation}\label{eq:divergence estimate two weights}
	\tilde C_0=\max\left\{C_0\,\(1-\dfrac{n}{n_a}\),\dfrac{1}{n_a}\right\}.
	\end{equation}
	
	\item[{\rm (ii)}] If 
	{\rm \ref{item:C_1}} holds with $C_1>0$, then 
	$$	
	\omega(x)^{1-\frac{1}{n_a}}
	\(\det D^2_A\phi(x)\)^{1/n_a}
	\leq 
	\frac{C_1}{n_a}\,
	\diver_A\(\omega(x)^{1/p'}\sigma(x)^{1/p}\,\nabla \phi\)\ \textrm{for a.e.}\ x\in E.
	$$

\end{itemize}
 	\end{lemma}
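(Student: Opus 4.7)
The plan is to expand the divergence on the right-hand side by the product rule, then use the structural condition \ref{item:C_0} (respectively \ref{item:C_1}) with $y=\nabla\phi(x)\in E$ to control the ratio of weights by the logarithmic-derivative term, and finally apply the arithmetic-geometric mean inequality on the Hessian together with Young's inequality to recombine everything into the divergence.

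More concretely, first I would compute, where everything is defined in the Alexandrov sense,
\begin{equation*}
\diver_A\bigl(\omega(x)^{1/p'}\sigma(x)^{1/p}\,\nabla\phi\bigr)=\omega(x)^{1/p'}\sigma(x)^{1/p}\Bigl[A(x)+\Delta_A\phi(x)\Bigr],
\end{equation*}
where I set
\begin{equation*}
A(x):=\left(\dfrac{1}{p'}\dfrac{\nabla\omega(x)}{\omega(x)}+\dfrac{1}{p}\dfrac{\nabla\sigma(x)}{\sigma(x)}\right)\cdot\nabla\phi(x),
\end{equation*}
which is meaningful a.e.\ since $\omega,\sigma$ are differentiable a.e. Next, using the identity $1-\frac{1}{n_a}=\frac{1}{p'}+\frac{1}{q}$ (a direct consequence of \eqref{eq:fracdim}), I would rewrite
\begin{equation*}
\omega(x)^{1-\frac{1}{n_a}}\omega(\nabla\phi(x))^{-1/q}\sigma(\nabla\phi(x))^{1/p}=\omega(x)^{1/p'}\sigma(x)^{1/p}\cdot B(x),
\end{equation*}
with $B(x):=\Bigl(\frac{\sigma(\nabla\phi(x))}{\sigma(x)}\Bigr)^{1/p}\Bigl(\frac{\omega(x)}{\omega(\nabla\phi(x))}\Bigr)^{1/q}$. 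Thus, after dividing by the common positive factor $\omega(x)^{1/p'}\sigma(x)^{1/p}$, the claim in (i) reduces to
\begin{equation*}
B(x)\,(\det D_A^2\phi(x))^{1/n_a}\le\tilde C_0\bigl[A(x)+\Delta_A\phi(x)\bigr].
\end{equation*}

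For part (i), I would now apply condition \ref{item:C_0} with $y=\nabla\phi(x)\in E$, which yields $B(x)\le (C_0 A(x))^{1-n/n_a}$; in particular $A(x)\ge 0$. The classical inequality between arithmetic and geometric means applied to the eigenvalues of the positive semidefinite matrix $D_A^2\phi(x)$ gives $(\det D_A^2\phi(x))^{1/n}\le \Delta_A\phi(x)/n$, hence $(\det D_A^2\phi(x))^{1/n_a}\le(\Delta_A\phi(x)/n)^{n/n_a}$. Combining these two bounds and then invoking Young's inequality with conjugate weights $\lambda=1-n/n_a$ and $\mu=n/n_a$ summing to $1$,
\begin{equation*}
(C_0 A)^{1-n/n_a}\bigl(\Delta_A\phi/n\bigr)^{n/n_a}\le\left(1-\tfrac{n}{n_a}\right)C_0 A+\tfrac{1}{n_a}\Delta_A\phi\le\tilde C_0\,[A+\Delta_A\phi],
\end{equation*}
with $\tilde C_0=\max\{C_0(1-n/n_a),\,1/n_a\}$, exactly as in \eqref{eq:divergence estimate two weights}. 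The boundary case $n_a=+\infty$ should follow by passing to the limit (or directly from \eqref{c-0-reduced-0}), and for $p=1$ the convention $1/p'=0$ should be respected throughout the same computation, so that the estimate degenerates to the statement announced after Theorem \ref{main-theorem-2}.

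For part (ii), with $n_a=n$ and \ref{item:C_1}, the reduction is simpler: $A(x)\ge 0$ now comes from \eqref{c-1-feltetel}, and the bound $\omega(x)^{1/q}/\sigma(x)^{1/p}\le C_1$ gives $\omega(x)^{1-1/n}\le C_1\,\omega(x)^{1/p'}\sigma(x)^{1/p}$. Combining this with the AM--GM bound $(\det D_A^2\phi(x))^{1/n}\le\Delta_A\phi(x)/n$ and the nonnegativity of $A(x)$, one obtains the claimed pointwise inequality directly. I expect the main technical point to be the justification of the pointwise product rule and Alexandrov identities at points where $\nabla\phi$ is twice differentiable in the Alexandrov sense while $\omega,\sigma$ are classically differentiable — a measure-zero exceptional set argument, standard in the optimal-transport framework of \cite{CENV} and \cite{Villani}, but requiring that the inequality be read as $0\cdot\infty=0$ wherever $\det D_A^2\phi=0$ or the weights degenerate.
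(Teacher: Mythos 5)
Your proposal is correct and follows essentially the same route as the paper's proof: the definition of $\diver_A$ (product rule), the AM--GM bound $(\det D^2_A\phi)^{1/n}\leq \Delta_A\phi/n$, the weighted Young inequality with exponents $1-n/n_a$ and $n/n_a$, condition \ref{item:C_0} (resp.\ \ref{item:C_1}) evaluated at $y=\nabla\phi(x)$, and the final $\max$ bound using $A(x)\geq 0$ and $\Delta_A\phi\geq 0$; your normalization by the common factor $\omega(x)^{1/p'}\sigma(x)^{1/p}$ and the handling of the degenerate cases $n_a=+\infty$ (via \eqref{c-0-reduced-0}) and $p=1$ (via $1/p'=0$) match the paper's case analysis. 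No gaps.
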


\begin{proof} 
 		
 
 	Let us begin proving (i). We divide the proof into several cases. 
 	
 	\textit{Case 1}: $p>1$ and $n_a<+\infty$.
	Since $\nabla \phi(E)\subseteq E$, $\omega\(\nabla \phi(x)\)$ and $\sigma\(\nabla \phi(x)\)$ are well-defined for a.e. $x\in E.$	Therefore, for a.e. $x\in E$, we have 
 		\begin{align*}
 		&\omega(x)^{1-\frac{1}{n_a}}\,\omega\(\nabla \phi(x)\)^{-1/q}\,
 		\sigma\(\nabla \phi(x)\)^{1/p}
 		\,
 		\(\det D^2_A\phi(x)\)^{1/n_a}\\
 		&\leq
 		\omega(x)^{1-\frac{1}{n_a}}\,\omega\(\nabla \phi(x)\)^{-1/q}\,
 		\sigma\(\nabla \phi(x)\)^{1/p}
 		\,
 		\(\dfrac{\Delta_A \phi(x)}{n}\)^{n/n_a}\qquad \text{(from the AM-GM inequality)}\\
 		&=\omega(x)^{1-\frac{1}{n_a}}\,
 		\(\dfrac{\omega\(\nabla \phi(x)\)^{-1/q}\,
 			\sigma\(\nabla \phi(x)\)^{1/p}}{\omega(x)^{-n/qn_a}\,\sigma(x)^{n/pn_a}}\)
 		\,
 		\(\dfrac{\Delta_A \phi(x)}{n}\,\omega(x)^{-1/q}\,\sigma(x)^{1/p}\)^{n/n_a}\\
 		&=
 		\omega(x)^{1-\frac{1}{n_a}}\,
 		\(\(\dfrac{\omega\(\nabla \phi(x)\)^{-1/q}\,
 			\sigma\(\nabla \phi(x)\)^{1/p}}{\omega(x)^{-n/qn_a}\,\sigma(x)^{n/pn_a}}\)^{n_a/(n_a-n)}\)^{1-\frac{n}{n_a}}
 		\,
 		\(\dfrac{\Delta_A \phi(x)}{n}\,\omega(x)^{-1/q}\,\sigma(x)^{1/p}\)^{n/n_a}\\
 		&\leq
 		\omega(x)^{1-\frac{1}{n_a}}\,
 		\(
 		\(1-\frac{n}{n_a}\)
 		\(\dfrac{\omega\(\nabla \phi(x)\)^{-1/q}\,\sigma\(\nabla \phi(x)\)^{1/p}}{\omega(x)^{-n/qn_a}\,\sigma(x)^{n/pn_a}}\)^{n_a/(n_a-n)}
 		+
 		\dfrac{1}{n_a}
 		\,\omega(x)^{-1/q}\,\sigma(x)^{1/p}\,\Delta_A \phi(x)\)\\
 		&\leq
 		\omega(x)^{1-\frac{1}{n_a}}\,
 		\(1-\frac{n}{n_a}\)
 		\(\dfrac{\(C_0\,\(\dfrac{1}{p'}\dfrac{\nabla \omega(x)}{\omega(x)}+\dfrac{1}{p}\dfrac{\nabla \sigma(x)}{\sigma(x)}\)\cdot \nabla \phi(x)\)^{(n_a-n)/n_a}\,\,\omega(x)^{-1/q}\sigma(x)^{1/p} }{\omega(x)^{-n/qn_a}\,\sigma(x)^{n/pn_a}}\)^{n_a/(n_a-n)}\\
 		&\qquad \qquad +
 		\dfrac{1}{n_a}
 		\omega(x)^{1-\frac{1}{n_a}}\,\omega(x)^{-1/q}\,\sigma(x)^{1/p}\,\Delta_A \phi(x)\qquad \qquad\text{(from \ref{item:C_0})}
 		\\
 		&=
 		\omega(x)^{1-\frac{1}{n_a}}\,
 		\(1-\frac{n}{n_a}\)
 		\dfrac{C_0\,\(\dfrac{1}{p'}\dfrac{\nabla \omega(x)}{\omega(x)}+\dfrac{1}{p}\dfrac{\nabla \sigma(x)}{\sigma(x)}\)\cdot \nabla \phi(x)}{\omega(x)^{1/q}\,\sigma(x)^{-1/p}}
 		+
 		\dfrac{1}{n_a}
 		\omega(x)^{1-\frac{1}{n_a}}\,\omega(x)^{-1/q}\,\sigma(x)^{1/p}\,\Delta_A \phi(x)\\
 		&=
 		\omega(x)^{1/p'}\,\sigma(x)^{1/p}\,\(1-\frac{n}{n_a}\)
 		\,C_0\,\(\dfrac{1}{p'}\dfrac{\nabla \omega(x)}{\omega(x)}+\dfrac{1}{p}\dfrac{\nabla \sigma(x)}{\sigma(x)}\)\cdot \nabla \phi(x)
 		+
 		\dfrac{1}{n_a}
 		\omega(x)^{1/p'}\,\sigma(x)^{1/p}\,\Delta_A \phi(x)\\
 		&
 		\leq
 		\max\left\{C_0\,\(1-\frac{n}{n_a}\),\dfrac{1}{n_a}\right\}
 		\(\omega(x)^{1/p'}\,\sigma(x)^{1/p}
 		\,\(\dfrac{1}{p'}\dfrac{\nabla \omega(x)}{\omega(x)}+\dfrac{1}{p}\dfrac{\nabla \sigma(x)}{\sigma(x)}\)\cdot \nabla \phi(x)
 		+
 		\omega(x)^{1/p'}\,\sigma(x)^{1/p}\,\Delta_A \phi(x)\)\\
 		&=
 		\max\left\{C_0\,\(1-\frac{n}{n_a}\),\dfrac{1}{n_a}\right\}
 		\,
 		\diver_A\(\omega(x)^{1/p'}\sigma(x)^{1/p}\,\nabla \phi\),
 		\end{align*}
 		which proves (i) whenever $p>1$. 
In the above estimates we used that both terms $\Delta_A \phi(x)$ and $\left( \frac{1}{p'} \frac{\nabla \omega (x)}{\omega(x)} + \frac{1}{p} \frac{\nabla \sigma(x)}{\sigma(x)} \right) \cdot \nabla \phi(x) $ are nonnegative.  	
	
  \textit{Case 2}: $p=1$ and $n_a<+\infty$.	Then $\frac{1}{n_a}+\frac{1}{q}=1$ and $\frac{1}{p'}=\frac{p-1}{p}=0$; accordingly,   condition \ref{item:C_0} takes the form 
 		\begin{equation}\label{degenerate-C-0}
 		\(\dfrac{\sigma(y)}{\sigma(x)} \,\(\dfrac{\omega(x)}{\omega(y)}\)^{(n_a-1)/n_a}\)^{n_a/(n_a-n)}
 		\leq
 		C_0\dfrac{\nabla \sigma(x)}{\sigma(x)}\cdot y\ \ \ \textrm{for all}\ x,y\in E.
 		\end{equation}
 		A similar argument as before gives 
 		\begin{equation*}	 
 		\omega(x)^{1-\frac{1}{n_a}}\,\omega\(\nabla \phi(x)\)^{-1/q}\,
 		\sigma\(\nabla \phi(x)\)
 		\,
 		\(\det D^2_A\phi(x)\)^{1/n_a}
 		\leq 
 		\tilde C_0\,
 		\diver_A\(\sigma(x)\,\nabla \phi(x)\)\ \textrm{for a.e.}\ x\in E,
 		\end{equation*}
 		which is the desired inequality.
 		
	\textit{Case 3}: $p>1$ and $n_a=+\infty$. Since  $n_a=+\infty$,	we have $q=p$. Thus, by \eqref{c-0-reduced-0} and $\Delta_A \phi(x)\geq 0$ for a.e. $x\in E,$ it turns out that 
 		\begin{eqnarray*}
 		\omega(x)\,\omega\(\nabla \phi(x)\)^{-1/p}\,
 		\sigma\(\nabla \phi(x)\)^{1/p}
 		&\leq&
 		C_0\omega(x)^{\frac{1}{p'}}\,\sigma\(x\)^{1/p}\,
 		\(\dfrac{1}{p'}\dfrac{\nabla \omega(x)}{\omega(x)}+\dfrac{1}{p}\dfrac{\nabla \sigma(x)}{\sigma(x)}\)\cdot \nabla\phi(x)\\
 		&\leq&
 		C_0
 		\,
 		\diver_A\(\omega(x)^{1/p'}\sigma(x)^{1/p}\,\nabla \phi(x)\),
 		\end{eqnarray*}
 		which is the required inequality with  $\tilde C_0=C_0$.

 		\textit{Case 4}: $p=1$ and $n_a=+\infty$. Since in this case $p=q=1$, 
 		 condition \ref{item:C_0} reduces to
 		\begin{equation}\label{degenerate-C-0-0}
 		\dfrac{\sigma(y)}{\sigma(x)} \dfrac{\omega(x)}{\omega(y)}
 		\leq
 		C_0\dfrac{\nabla \sigma(x)}{\sigma(x)}\cdot y\ \ \ \textrm{for a.e. $x\in E$ and all}\ y\in E.
 		\end{equation}
 		Therefore, by \eqref{degenerate-C-0-0} and $\Delta_A \phi(x)\geq 0$ for a.e. $x\in E,$ one has
 		\begin{equation*}		
 		\omega(x)\,\omega\(\nabla \phi(x)\)^{-1}\, \sigma\(\nabla \phi(x)\)	\leq 
 		C_0\,
 		\nabla\sigma(x)\cdot\nabla \phi(x)
 		\leq 
 		 C_0\,
 		\diver_A\(\sigma(x)\,\nabla \phi(x)\)\ \textrm{for a.e.}\ x\in E,
 	\end{equation*}
 		concluding the proof of (i).

 		To show(ii), we divide the proof into two parts.
 		
\textit{Case 1}: $p>1$ and $n_a=n$.	 Since $n_a=n$, one has $\frac{1}{p} - \frac{1}{q} = \frac{1}{n}$.  Moreover, by the definition of $C_1>0$ in condition
		\ref{item:C_1} it follows that		
 		\begin{equation}\label{kozbeeso}
 			\omega(x)^{1-\frac{1}{n}}\leq C_1\omega(x)^{1/p'}\sigma(x)^{1/p},\ \ x\in E.
 		\end{equation}
 		 Then for a.e. $x\in E$ one has
 		\begin{eqnarray*}
 		\omega(x)^{1-\frac{1}{n_a}}
 		\(\det D^2_A\phi(x)\)^{1/n_a}
 		&=& \omega(x)^{1-\frac{1}{n}}
 		\(\det D^2_A\phi(x)\)^{1/n}\\ 
 		&\leq & 
 		\omega(x)^{1-\frac{1}{n}}
 		\dfrac{\Delta_A \phi(x)}{n}\text{\hspace{1.5cm}(from the AM-GM inequality)}\\ 
 		&\leq & 
 		\frac{C_1}{n}\omega(x)^{1/p'}\sigma(x)^{1/p}
 	{\Delta_A \phi(x)}\text{\hspace{1.5cm}}\\ 
 		&\leq & 
 		\frac{C_1}{n}\(\(\dfrac{1}{p'}\dfrac{\nabla \omega(x)}{\omega(x)}+\dfrac{1}{p}\dfrac{\nabla \sigma(x)}{\sigma(x)}\)\cdot \nabla \phi(x)+\omega(x)^{1/p'}\sigma(x)^{1/p}
 		{\Delta_A \phi(x)}\)\\&&\hspace{5cm} \text{\hspace{0.5cm}(from $\nabla \phi(E)\subseteq E$ and \ref{item:C_1})}
 		\\&=&
 		\frac{C_1}{n}\diver_A\(\omega(x)^{1/p'}\sigma(x)^{1/p}\,\nabla \phi(x)\),
 		\end{eqnarray*}
 		which concludes the proof whenever $p>1$. 
 		
 		\textit{Case 2}: $p=1$ and $n_a=n$. Since $p=1$, one has $\frac{1}{p'} = 0$, and  condition \ref{item:C_1} reads as $\sup_{x\in E}\dfrac{\omega(x)^{1/q}}{\sigma(x)}=C_1\in (0,\infty)$ and  $0
 		\leq
 		\nabla \sigma(x)\cdot y$ {for all} $ x,y\in E.$ In particular, since $\frac{1}{q}=1-\frac{1}{n}$, then  $\omega(x)^{1-\frac{1}{n}}\leq C_1\sigma(x)$ for every $x\in E.$
 		A similar argument as in the previous case  provides the inequality 
 		\begin{equation*}		
 			\omega(x)^{1-\frac{1}{n_a}}
 		\(\det D^2_A\phi(x)\)^{1/n_a}\leq\frac{C_1}{n}\diver_A\(\sigma(x)\,\nabla \phi(x)\)\  \ \textrm{for a.e.}\ x\in E,
 	    \end{equation*}
 		which concludes the proof of the lemma. 
 	\end{proof}

\subsection{Proof of Theorem \ref{main-theorem-1}}
 
 	From Lemma \ref{lm:condition for two weights} we can now give the proof of the desired weighted Sobolev inequalities on convex cones.
%
 
  Let $u\in C_0^\infty(\mathbb R^n)$ be fixed.
 		If $\mathcal L^n({\rm supp}(u)\cap E)=0$, we have nothing to prove; hereafter, $\mathcal L^n$ stands for the $n$-dimensional Lebesgue measure. Thus, we may assume that $\mathcal L^n({\rm supp}(u)\cap E)>0$ and to simplify the notation, let $U={\rm supp}(u)$. We may assume that $u$ is nonnegative and by scaling
 		$$\int_E u(x)^q\,\omega(x)\,dx=1.$$
 		We also fix $v\in C_0^\infty(\mathbb R^n)$ a nonnegative function  satisfying 
 		\[
 		 \int_E v(y)\,dy=1.
 		\]
 		Consider the probability measures in $E$, $\mu=u^q\,\omega\,dx$ and $\nu=v\,dy$, and
 		let $T$ be the optimal map with respect to the quadratic cost such that $T_\sharp \mu=\nu$. By Brenier's theorem there is $\phi$ convex in $\mathbb R^n$ such that $T=\nabla \phi$ and $\nabla \phi(E)\subseteq {\rm supp}\nu \subseteq E$.
 		This is equivalent to the following Monge-Amp\`ere equation
 		\begin{equation}\label{eq:MA equation two weights-1-0}
 		u^q(x)\,\omega(x)=v\(\nabla \phi(x)\)\,\det D^2_A\phi(x)\ \ \textrm{for a.e.}\ x\in U\cap E.
 		\end{equation}
 	
 	Proof of (i). 
 		Raising \eqref{eq:MA equation two weights-1-0} to the power $1-\frac{1}{n_a}$ and rewriting the resulting equation yields
 		\begin{equation}\label{eq:MA equation two weights-2-0}
 		v^{1-\frac{1}{n_a}}\(\nabla \phi(x)\)\,h(\nabla \phi(x))\,\det D^2_A\phi(x)
		=u^{q\(1-\frac{1}{n_a}\)}(x)\,\omega^{1-\frac{1}{n_a}}(x)\,h(\nabla \phi(x))[\det D^2_A\phi(x)]^\frac{1}{n_a},
 		\end{equation}
 		where $h(x)=\omega\(x\)^{-1/q}\,
 		\sigma\(x\)^{1/p}.$
 		Integrating this identity over $U\cap E$, changing variables on the LHS, and using Lemma \ref{lm:condition for two weights}/(i) on the RHS, yields
 		\begin{align*}
 		\int_E v(y)^{1-\frac{1}{n_a}}\,h(y)\,dy
 		&\leq
 		\tilde C_0\,\int_{U\cap E} u(x)^{q\(1-\frac{1}{n_a}\)}\,\diver_A\(\sigma(x)^{1/p}\omega(x)^{1/p'}\,\nabla \phi\)\,dx:=\tilde C_0\,I.
 		\end{align*}
 		Since $\Delta_A\phi\leq \Delta_{\mathcal D'}\phi$, where $\Delta_{\mathcal D'}$ is the distributional Laplacian, integrating by parts one gets 
 	\begin{eqnarray}\label{integr-part}
 	I&\leq &\int_{U\cap E} u^{q\(1-\frac{1}{n_a}\)}(x)\,{\rm div}_{\mathcal D'}(\omega(x)^{\frac{1}{p'}}\sigma(x)^{\frac{1}{p}}\nabla \phi(x))dx\nonumber\\&= &
 		\int_{\partial {(U\cap E)}} u^{q\(1-\frac{1}{n_a}\)}(x)\,\omega(x)^{\frac{1}{p'}}\sigma(x)^{\frac{1}{p}}\nabla \phi(x) \cdot {\bf n}(x)\,ds(x)\nonumber \\&&-q\(1-\frac{1}{n_a}\)\int_{U\cap E} u^{q\(1-\frac{1}{n_a}\)-1}(x)\,\omega(x)^{\frac{1}{p'}}\sigma(x)^{\frac{1}{p}}\nabla \phi(x)\cdot \nabla u(x)\,dx,
 	\end{eqnarray}
 		where ${\bf n}(x)$ is the outer normal vector at $x\in \partial (U\cap E).$ 
		Since $E$ is a convex cone,  $y\cdot {\bf n}(x)\leq 0$ for each $y\in \bar E$ and $x\in \partial E$. 
		In particular, $\nabla \phi(x)\cdot {\bf n}(x)\leq 0$ for each $x\in \partial E$, since $\nabla \phi(\overline E)\subseteq \overline E$. 
		On the other hand,		
		$\partial (U\cap E)\subset \partial U\cup \partial E.$ So
		we obtain that the integrand in the boundary integral is nonpositive for $x\in \partial E$ and is zero for $x\in \partial U$ since $q\(1-\frac{1}{n_a}\)>0$.
%
%
 		Therefore, the boundary integral in \eqref{integr-part} can be dropped and by H\"older's inequality
		it follows that 
 		\begin{equation*}
 		I\leq q\(1-\frac{1}{n_a}\)\(\int_{E} u^{q}(x)\omega(x)|\nabla \phi(x)|^{p'}dx\)^\frac{1}{p'}\(\int_{E} |\nabla u(x)|^p\sigma (x)dx\)^\frac{1}{p},
 		\end{equation*}
		since $\(q\(1-\frac{1}{n_a}\)-1\)p'=q$.
 	Using once again the Monge-Amp\`ere equation \eqref{eq:MA equation two weights-1-0} yields
 		\begin{equation*}
 		\int_E u(x)^{q}\,\omega(x)\,|\nabla \phi(x)|^{p'}\,dx
 		=
 		\int_E v\(\nabla \phi(x)\)\,|\nabla \phi(x)|^{p'}\,\det D^2_A\phi(x)\,dx
		=
 		\int_E v(y)\,|y|^{p'}\,dy.
 		\end{equation*}
 	Therefore, the above estimates give
 		$$\int_E v(y)^{1-\frac{1}{n_a}}\,h(y)\,dy
		\leq \tilde C_0\,q\(1-\frac{1}{n_a}\)\(\int_E v(y)\,|y|^{p'}\,dy\)^\frac{1}{p'}\(\int_{E} |\nabla u(x)|^p\sigma (x)\,dx\)^\frac{1}{p}.$$
 		which completes the proof of (i).
 		
 		Proof of (ii). Since \ref{item:C_1} holds, one has $n_a=n$. From \eqref{eq:MA equation two weights-1-0}, we have
 		\begin{equation*}\label{eq:MA equation two weights-2-1}
 		v^{1-\frac{1}{n_a}}\(\nabla \phi(x)\)\,\det D^2_A\phi(x)=u^{q\(1-\frac{1}{n_a}\)}(x)\,\omega^{1-\frac{1}{n_a}}(x)[\det D^2_A\phi(x)]^\frac{1}{n_a},\ x\in E.
 		\end{equation*}
 		Integrating the last equation and using Lemma \ref{lm:condition for two weights}/(ii) gives
 			\begin{equation}\label{isop-kell}
 		\int_E v(y)^{1-\frac{1}{n_a}}\,dy
 		\leq
 		\frac{C_1}{n_a}\,\int_{U\cap E} u(x)^{q\(1-\frac{1}{n_a}\)}\,\diver_A\(\sigma(x)^{1/p}\omega(x)^{1/p'}\,\nabla \phi\)\,dx.
 		\end{equation}
 		We now proceed as in case (i), obtaining that 
 		$$	\int_E v(y)^{1-\frac{1}{n_a}}\,dy
 		\leq
 		\frac{C_1}{n_a}q\(1-\frac{1}{n_a}\)\(\int_E v(y)\,|y|^{p'}\,dy\)^\frac{1}{p'}\(\int_{E} |\nabla u(x)|^p\sigma (x)\)^\frac{1}{p},$$
 		which completes the proof of the theorem. 	
 	\hfill $\square$


 \subsection{Proof of Theorem \ref{main-theorem-2}}
 
Let us start with an arbitrarily fixed nonnegative function $u\in C_0^\infty(\mathbb R^n)$ with the property $\int_E u(x) ^{\frac{n_a}{n_a-1}} \omega(x) dx = 1$, and $v(y):=\frac{\omega(y)}{\int_{B\cap E}\omega(y)dy}\mathbbm{1}_{B\cap E}(y)$. Let us consider the optimal transport map $T = \nabla \phi$ such that   $T_{\sharp} \mu = \nu $ for $\mu = u^{\frac{n_a}{n_a-1}} \omega dx $ and $\nu = v dx$. We may repeat the arguments from Theorem \ref{main-theorem-1} with suitable modifications. 
 	
 Proof of (i).	If  \ref{item:C_0} holds, then since $\nabla \phi(x)\in {\rm supp}v=B\cap E$ for a.e. $x\in U\cap E$, we can use  Lemma \ref{lm:condition for two weights}/(i) for $p=1$. In this case we notice that $1-\frac{1}{n_a} = \frac{1}{q}$.  The divergence theorem and $\nabla \phi(x)\in B\cap E$ for a.e. $x\in U\cap E$ imply 
 	\begin{eqnarray*}
 		\int_{B\cap E} v(y)^{1-\frac{1}{n_a}}\,\omega\(y\)^{-\frac{1}{q}}\,
 		\sigma\(y\)\,dy
 		&\leq&
 		\tilde C_0\,\int_{U\cap E} u(x)^{q\(1-\frac{1}{n_a}\)}\,\diver_A\(\sigma(x)\,\nabla \phi\)\,dx\\
 		&=&\tilde C_0\,\int_{U\cap E} u(x)\,\diver_A\(\sigma(x)\,\nabla \phi\)\,dx\\
 		&\leq&\tilde C_0\left(\int_{\partial {(U\cap E)}} u(x)\sigma(x)\nabla \phi(x) \cdot {\bf n}(x)\,ds(x)\right.\\&&\left.-\int_{U\cap E} \sigma(x)\nabla u(x)\cdot \nabla \phi(x)\,dx \right)\\
 		&\leq&\tilde C_0\int_{U\cap E} \sigma(x)|\nabla u(x)||\nabla \phi(x)| \,dx \\
 		&\leq&\tilde C_0\int_{ E} |\nabla u(x)|\sigma(x) \,dx. 
 	\end{eqnarray*}
 	Using again the relation  $1-\frac{1}{n_a} = \frac{1}{q}$, we obtain 
	
	$$\int_{B\cap E} v(y)^{1-\frac{1}{n_a}}\,\omega\(y\)^{-\frac{1}{q}}\,
 	\sigma\(y\)\,dy=\frac{\int_{B\cap E} 
 	\sigma\(y\)\,dy}{\(\int_{B\cap E} \omega(y)\,dy\)^{1-\frac{1}{n_a}}}.$$
 	
 	Proof of (ii).  Suppose, that condition \ref{item:C_1} holds for some $C_1 >0$.  In this case, instead of (\ref{isop-kell}), we use Lemma \ref{lm:condition for two weights}/(ii) for $p=1$. We conclude 	\begin{align*}
 	\int_{B\cap E} v(y)^{1-\frac{1}{n_a}}\,dy
 	&\leq
 	\frac{C_1}{n_a}\,\int_{U\cap E} u(x)^{q\(1-\frac{1}{n_a}\)}\,\diver_A\(\sigma(x)\,\nabla \phi\)\,dx=\frac{C_1}{n_a}\,\int_{U\cap E} u(x)\,\diver_A\(\sigma(x)\,\nabla \phi\)\,dx.
 	\end{align*}
 	Proceeding as before yields 
 	$$	\int_E v(y)^{1-\frac{1}{n_a}}\,dy
 	\leq
 	\frac{C_1}{n_a}\int_{E} |\nabla u(x)|\sigma (x)dx,$$
 	which concludes the proof. 
 	
 	Clearly, both  (i) and (ii) can be extended to functions with $\sigma$-bounded variation on $E$.	
 \hfill $\square$

 \begin{remark} \rm Theorems \ref{main-theorem-1} and  \ref{main-theorem-2} can be formulated in the \textit{anisotropic} setting as well, by considering any norm instead of the usual Euclidean one.
 The only technical difference is the use of H\"older's inequality for the norm and its polar transform, see e.g.\ \cite{CFR, CENV}. 
 	When $\omega=\sigma=1$, the weights are homogeneous of degree zero and one has $n_a = n$. Choosing $E=\mathbb R^n$,   condition \ref{item:C_1} trivially holds with constant $C_1=1$. Thus  Theorems \ref{main-theorem-1}/(ii) and \ref{main-theorem-2}/(ii) yield the well-known sharp Sobolev inequality  ($p>1$) and sharp isoperimetric inequality $(p=1)$, respectively, in Del Pino and  Dolbeault \cite{delPinoDolbeault-1, delPinoDolbeault-2} and Cordero-Erausquin,  Nazaret and Villani \cite[Theorems 2 and 3]{CENV}.
 \end{remark}

\section{Discussion of the equality cases: proof of Theorem \ref{main-theorem-1-equality}}\label{sec:discussion of conditions}

In this section we are going to prove Theorem \ref{main-theorem-1-equality}, that is, to  identify the equality cases in Theorems \ref{main-theorem-1} and \ref{main-theorem-2}. 
As we already pointed out after the statement of Theorem \ref{main-theorem-2}, we may extend \eqref{WSI} from $C_0^\infty(\mathbb R^n)$ to  $\dot W_\sigma^{1,p}(\mathbb R^n)$, that is larger space in order to search for a suitable candidate as an extremal function. To do this extension, a careful approximation argument is needed which is similar to the one carried out in \cite[Lemma 7]{CENV} for the unweighted case, and that was adapted to equal monomial weights in \cite{Nguyen}. In fact, the idea to do this is to extend the integration by parts formula \eqref{integr-part} to functions $u$ in $\dot W_\sigma^{1,p}(\mathbb R^n)$, a technical issue discussed in detail in \cite{CENV, Nguyen}. Since the same technique can be adapted also to our setting, we thus omit the details.  

In order to prove Theorem \ref{main-theorem-1-equality} we shall need some preliminary results. First, we have the following characterization of concavity. 

\begin{lemma}\label{concave-characterization}
	Let $E\subseteq \mathbb R^n$ be an open convex set and $h:E\to \mathbb R$ be a continuous function which is a.e. differentiable in $E$. Then the following statements are equivalent: 
	\begin{itemize}
		\item[{\rm (a)}] $h$ is concave in $E;$
		\item[{\rm (b)}] for a.e. $x\in E$ and all $y\in E$ one has 
		$h(y) - h(x)\leq \nabla h (x) \cdot (y-x).$
	\end{itemize}
\end{lemma}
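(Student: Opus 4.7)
The plan is to handle the two implications separately. The implication \textbf{(a) $\Rightarrow$ (b)} is the familiar first-order characterization of concavity at points of differentiability: fix $x\in E$ at which $h$ is differentiable and an arbitrary $y\in E$. Concavity applied along the segment from $x$ to $y$ gives, for every $t\in(0,1)$,
\[
\frac{h\bigl(x+t(y-x)\bigr)-h(x)}{t}\ \geq\ h(y)-h(x).
\]
Letting $t\to 0^+$, the left-hand side converges to the directional derivative $\nabla h(x)\cdot(y-x)$, yielding (b). This works at every point where $h$ is differentiable, so in particular for a.e.\ $x\in E$.

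For the converse \textbf{(b) $\Rightarrow$ (a)}, the idea is to build the concave envelope of $h$ out of the a.e.\ supporting affine functions provided by (b), and then show that this envelope must coincide with $h$ by continuity. Let $G\subseteq E$ be the full-measure set on which $h$ is differentiable and inequality (b) holds, and set
\[
\tilde h(x)\ :=\ \inf_{z\in G}\bigl[\,h(z)+\nabla h(z)\cdot(x-z)\,\bigr],\qquad x\in E.
\]
Inequality (b) itself gives $\tilde h\geq h$ on $E$, while plugging $z=x$ into the infimum for $x\in G$ yields $\tilde h(x)\leq h(x)$; hence $\tilde h=h$ on $G$, i.e.\ almost everywhere in $E$. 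Moreover, $\tilde h$ is bounded below by $h>-\infty$ and bounded above by the value of any single affine bound, so it is finite-valued; as an infimum of affine functions it is automatically concave on $E$.

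The last step, and the only nontrivial technical point, is to upgrade the a.e.\ identity $\tilde h=h$ to a pointwise one. Here I would invoke the classical fact that a finite-valued concave function on an open convex subset of $\mathbb R^n$ is continuous (indeed locally Lipschitz on compact subsets); combined with the continuity of $h$ and with the observation that the full-measure set $G$ is dense in the open set $E$, this forces $\tilde h=h$ everywhere in $E$, and hence $h$ itself is concave. The main (mild) obstacle in the argument is precisely this passage from an a.e.\ statement to an everywhere statement, which is exactly why the continuity hypothesis on $h$ is indispensable for the converse direction; without it, (b) would at best determine $h$ up to modification on a null set and the geometric concavity conclusion would fail.
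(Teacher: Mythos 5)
Your proof is correct, but the second implication follows a genuinely different route from the paper. For (b) $\Rightarrow$ (a) the paper argues directly: given $x_0,y_0\in E$ and $t\in(0,1)$, it applies inequality (b) at the point $z_0=(1-t)x_0+ty_0$ (or, if (b) is not available at $z_0$, at nearby good points $z_k\to z_0$, simultaneously perturbing $x_0,y_0$ to $x_k,y_k$ with $z_k=(1-t)x_k+ty_k$); taking the convex combination of the two inequalities makes the gradient terms cancel, giving $(1-t)h(x_k)+th(y_k)-h(z_k)\le 0$, and continuity of $h$ finishes the argument in the limit. You instead construct the concave envelope $\tilde h=\inf_{z\in G}\bigl[h(z)+\nabla h(z)\cdot(\cdot-z)\bigr]$, verify $\tilde h=h$ on the full-measure set $G$, and upgrade to equality everywhere using continuity of $h$, the classical continuity (local Lipschitzness) of finite concave functions on open convex sets, and the density of $G$. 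Both arguments are complete; the paper's is more elementary and self-contained (only continuity of $h$ and the cancellation trick are used, with no appeal to regularity of concave functions), while yours is conceptually transparent — it exhibits an explicit concave function that $h$ must equal — at the cost of invoking the standard regularity fact for concave functions. Your closing remark on the indispensability of the continuity hypothesis is also on point, and matches the role it plays in the paper's limiting argument.
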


 \begin{proof} Although  standard, we provide the proof since we did not find it in the literature.  The implication "(a)$\implies$(b)" is trivial. For "(b)$\implies$(a)", let $E_0\subset E$ be the set where $h$ is differentiable; clearly,  $\mathcal L^n(E\setminus E_0)=0$. Let 	
 	 $x_0,y_0\in E$, $0<t<1$, and $z_0=(1-t)x_0+ty_0$.
 	If $z_0\in E_0$, then by our assumption, we have that 
 	$h(x_0)-h(z_0)\leq \nabla h(z_0)\cdot (x_0-z_0)$ and $h(y_0)-h(z_0)\leq \nabla h(z_0)\cdot (y_0-z_0)$.
 	Multiplying the first inequality by $(1-t)$, the second by $t$, and adding them up yields
 	$(1-t)\,h(x_0)+t\,h(y_0)-h(z_0)\leq 0$.
 	On the other hand, if $z_0\notin E_0$, pick a sequence $z_k\in E_0$ such that $z_k\to z_0$.
 	Since $E$ is open, we can pick sequences $x_k,y_k\in E$ such that $x_k\to x_0$, $y_k\to y_0$, with $z_k=(1-t)x_k+ty_k$. In particular, we have that
 	$h(x_k)-h(z_k)\leq \nabla h(z_k)\cdot (x_k-z_k)$ and $h(y_k)-h(z_k)\leq \nabla h(z_k)\cdot (y_k-z_k)$.
 	Multiplying the latter inequality by $t$ and the former by $(1-t)$ yields $(1-t)\,h(x_k)+t\,h(y_k)-h(z_k)\leq 0$.
 	Since $h$ is continuous, letting $k\to \infty$ we obtain the concavity of $h$.
 	\end{proof} 


We are ready to prove a rigidity result based on the validity of condition \ref{item:C_0}. 

\begin{proposition}\label{prop-1} 
Let $E\subseteq \mathbb R^n$ be an open convex cone and weights $\omega,\sigma:E\to (0,\infty)$ satisfying relation  \eqref{eq:homogeneity of weights} with $\alpha>0$, $\tau\in \mathbb R$, continuous in $E$, differentiable a.e. in $E$. Assume in addition that at least one of the weights $\omega$ or $\sigma$ is locally Lipschitz in $E.$ If $n_a<+\infty$, 
we have
	\begin{enumerate}
		\item[{\rm (i)}] 
		if condition {\rm \ref{item:C_0}} holds with $C_0>0$ and $\tau\leq \alpha$, then $C_0\geq \frac{1}{n_a-n};$
		\item[{\rm (ii)}] the following statements are equivalent: 
		\begin{itemize}
			\item[{\rm (a)}] condition {\rm \ref{item:C_0}} holds for $C_0= \frac{1}{n_a-n}$ and $\tau\leq \alpha$;  
			\item[{\rm (b)}] $\omega=c\sigma$ for some $c>0$  $($thus $\alpha=\tau$$)$ and $\sigma^{1/\alpha}$ is concave in $E$.
		\end{itemize}
		
	\end{enumerate} 
\end{proposition}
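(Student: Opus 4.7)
\textbf{Proof plan for Proposition \ref{prop-1}.}

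For (i), the key step is to set $y = x$ in condition \ref{item:C_0}. By Euler's identity for the homogeneous weights, $\nabla\omega(x)\cdot x = \tau\omega(x)$ and $\nabla\sigma(x)\cdot x = \alpha\sigma(x)$, so the LHS collapses to $1$ while the RHS becomes $C_0(\tau/p'+\alpha/p)$. I would then convert the dimensional balance \eqref{eq:dimensional balance} into the identity $n_a - n = (q\alpha - p\tau)/(q-p)$ and verify by direct algebra that
$$\frac{\tau}{p'} + \frac{\alpha}{p} \leq n_a - n \iff (\tau - \alpha)\bigl(p(q+1) - q\bigr) \leq 0.$$
Since $p(q+1)-q > 0$ in the admissible range $p\geq 1$, $q\geq p$, the hypothesis $\tau \leq \alpha$ gives $\tau/p' + \alpha/p \leq n_a - n$, so $1 \leq C_0(\tau/p' + \alpha/p) \leq C_0(n_a - n)$, yielding $C_0 \geq 1/(n_a-n)$.

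For the implication (b) $\Rightarrow$ (a) of (ii), assuming $\omega = c\sigma$ (so $\tau = \alpha$ and $n_a - n = \alpha$) and $\sigma^{1/\alpha}$ concave, the LHS of \ref{item:C_0} simplifies to $(\sigma(y)/\sigma(x))^{1/\alpha}$ and the RHS becomes $\frac{1}{\alpha}\frac{\nabla\sigma(x)}{\sigma(x)}\cdot y$. Multiplying through by $\sigma(x)^{1/\alpha}$ and using Euler's identity for the $1$-homogeneous function $\sigma^{1/\alpha}$, this recasts exactly as the tangent-inequality $\sigma^{1/\alpha}(y) - \sigma^{1/\alpha}(x) \leq \nabla\sigma^{1/\alpha}(x)\cdot(y-x)$ characterized by Lemma \ref{concave-characterization}, which is equivalent to concavity of $\sigma^{1/\alpha}$.

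For (a) $\Rightarrow$ (b), the chain of inequalities in (i) with $C_0 = 1/(n_a-n)$ forces $\tau/p' + \alpha/p = n_a - n$, which by the algebraic criterion above imposes $\tau = \alpha$. Next, for a.e.\ $x\in E$ (where both weights are differentiable), the map $y \mapsto h(x,y) - \frac{1}{\alpha}a(x)\cdot y$, with $h(x,y)$ denoting the LHS of \ref{item:C_0} and $a(x) = \frac{1}{p'}\nabla\log\omega + \frac{1}{p}\nabla\log\sigma$, is $\leq 0$ and attains the interior maximum $0$ at $y = x$. Setting $\nabla_y h(x,x) = \frac{1}{\alpha}a(x)$, expanding via the chain rule, and simplifying with the identity $\frac{n_a}{q} + \frac{1}{p'} = \frac{n_a-1}{p}$ (a direct consequence of $1/p - 1/q = 1/n_a$) produces
$$\frac{n_a - 1}{p}\,\nabla\log\sigma(x) = \frac{n_a - 1}{p}\,\nabla\log\omega(x) \quad \text{for a.e. } x \in E,$$
i.e.\ $\nabla\log(\omega/\sigma) = 0$ a.e.\ in $E$.

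The principal technical obstacle is upgrading this a.e.\ identity to the rigidity conclusion $\omega/\sigma \equiv c$, since a continuous function with vanishing a.e.\ derivative need not be constant (Cantor-type counterexamples). This is exactly where the hypothesis that one of $\omega,\sigma$ is locally Lipschitz enters: assuming $\sigma$ locally Lipschitz, the relation $\nabla\omega = (\omega/\sigma)\nabla\sigma$ a.e.\ together with local boundedness of the continuous $0$-homogeneous ratio $\omega/\sigma$ yields a locally bounded a.e.\ gradient for $\omega$, and combining this with the radial rigidity forced by the common $\alpha$-homogeneity (the ratio $\omega/\sigma$ is already literally constant along each ray) promotes $\omega$ to a locally Lipschitz function. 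Consequently $\log(\omega/\sigma) \in W^{1,\infty}_{\mathrm{loc}}(E)$ with vanishing distributional gradient, and since $E$ is convex and hence connected, $\omega/\sigma$ is a positive constant $c$. Once $\omega = c\sigma$ is established, substituting back into \ref{item:C_0} reduces the inequality precisely to the tangent-plane condition for $\sigma^{1/\alpha}$, and Lemma \ref{concave-characterization} yields concavity of $\sigma^{1/\alpha}$, completing the proof.
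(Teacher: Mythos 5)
Your part (i), your implication (b) $\Rightarrow$ (a), and the first half of (a) $\Rightarrow$ (b) are correct and essentially coincide with the paper's argument: the algebraic criterion $(\tau-\alpha)\bigl(p(q+1)-q\bigr)\leq 0$ is an equivalent repackaging of the paper's chain of inequalities for $n_a-n$, the first-order condition at the interior minimum $y=x$ is exactly the paper's computation with the function $g_x$, and the identity $\frac{n_a}{q}+\frac{1}{p'}=\frac{n_a-1}{p}$ checks out.

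There is, however, a genuine gap in your final step, and it is precisely at the point you yourself flag as the principal obstacle. You claim that the a.e.\ relation $\nabla\omega=(\omega/\sigma)\nabla\sigma$ together with local boundedness of $\omega/\sigma$ "promotes $\omega$ to a locally Lipschitz function." It does not: a continuous, a.e.\ differentiable function with a locally bounded (even identically zero) a.e.\ gradient need not be locally Lipschitz --- this is the same Cantor-type obstruction you invoke one sentence earlier, and the $0$-homogeneity of $\omega/\sigma$ only rigidifies the radial direction, leaving the angular variables uncontrolled. Bounding the pointwise a.e.\ gradient gives no control on the distributional gradient unless you already know the function is absolutely continuous on lines, which is what you are trying to prove. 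The information needed to close this gap is not contained in the first-order condition $\nabla_y g_x(x)=0$ alone; you must return to the full inequality \ref{item:C_0}. The paper does this by observing that, once $\nabla\omega/\omega=\nabla\sigma/\sigma$ a.e.\ is known, the inequality \eqref{c-0-reduced-bis} rewrites as $h(y)\leq\nabla h(x)\cdot y$ for the $1$-homogeneous function $h=(\sigma^{\alpha+n}/\omega^{\alpha+n-p})^{1/(p\alpha)}$, which by Euler's identity and Lemma \ref{concave-characterization} forces $h$ to be \emph{concave}, hence locally Lipschitz on the open convex set $E$. Since one of the two weights is locally Lipschitz by hypothesis and $h$ ties them together, the other weight and therefore $f=\omega/\sigma$ are locally Lipschitz as well; only then does $\nabla f=0$ a.e.\ legitimately yield $f\equiv c$, after which $h=c^{(p-\alpha-n)/(p\alpha)}\sigma^{1/\alpha}$ gives the concavity of $\sigma^{1/\alpha}$. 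You should replace your regularity-upgrade paragraph with this concavity argument.
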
 

\begin{proof} 
	
	(i) 
	From Euler's theorem for homogeneous functions,  
$\nabla \omega (x) \cdot x = \tau \omega(x) $ and $\nabla \sigma (x) \cdot x = \alpha \sigma(x) $ for all a.e. $x \in E$. Picking $y=x\in E$ in \ref{item:C_0} yields 
	  $1\leq C_0\(\dfrac{\tau}{p'}+\dfrac{\alpha}{p}\)$ .
Using \eqref{eq:dimensional balance} and \eqref{eq:fracdim} we get that
$n_a = \dfrac{p\,(\tau +n)}{\tau - \alpha +p},$
and 
\begin{equation*}
n_a - n = \frac{p\, \tau + n(\alpha - \tau)}{\tau - \alpha +p} \geq \tau + \frac{n}{p}(\alpha -\tau) 
\geq \tau + \frac{1}{p}(\alpha - \tau) = \frac{\tau}{p'}+\frac{\alpha}{p},
\end{equation*}  	  
where in the last estimate we used the assumption $\tau\leq \alpha$.  
The lower estimate for $C_0$ then follows.
	
	(ii) "(b)$\implies$(a)" On one hand, by Lemma \ref{concave-characterization}, we notice that the concavity of $\sigma^{1/\alpha}$  in $E$ implies that 	$$ {\sigma(y)}^{1/\alpha}-{\sigma(x)}^{1/\alpha}
	\leq
	\nabla \sigma^{1/\alpha}(x)\cdot (y-x)\ \ \ \textrm{for a.e. $x\in E$ and all}\ y\in E.$$
	By the 1-homogeneity of $\sigma^{1/\alpha}$  and  Euler's theorem, it turns out that ${\sigma(x)}^{1/\alpha}=\nabla \sigma^{1/\alpha}(x)\cdot x$ for a.e. $x\in E$, thus the last inequality is equivalent to 
	\begin{equation}\label{concave-0}
	 {\sigma(y)}^{1/\alpha}
	\leq
	\nabla \sigma^{1/\alpha}(x)\cdot y=\frac{1}{\alpha}\sigma(x)^{1/\alpha-1}\nabla \sigma(x)\cdot y\ \ \ \textrm{for a.e. $x\in E$ and all}\ y\in E.
	\end{equation}
	On the other hand, since by assumption $\omega=c\sigma$ (for some $c>0$), one has $\tau=\alpha$ and $n_a=n+\alpha$. 
	Now using \eqref{eq:explicit-c-0} we see that 
	condition \ref{item:C_0}  means
$$
	\sigma(x)\(\dfrac{\sigma(y)}{\sigma(x)}\)^{1/\alpha}
	\leq
	C_0\nabla \sigma(x)\cdot y\ \ \ \textrm{for a.e. $x\in E$ and all}\ y\in E.
$$
On account of (\ref{concave-0}), condition \ref{item:C_0} holds for $C_0=\dfrac{1}{\alpha}=\dfrac{1}{n_a-n}$.
	
"(a)$\implies$(b)"	This is the trickiest part of the proof and at the same time is the most important result to use later in the description of equality in \eqref{WSI}. 

Since by assumption, condition {\rm \ref{item:C_0}} holds with $C_0= \frac{1}{n_a-n}$, it follows from  \eqref{eq:explicit-c-0} that 
\begin{equation}\label{c-0-reduced}
\left( \left(\frac{\sigma(y)}{\sigma(x)}\right)^{\tau +n} \left( \frac{\omega(x)}{\omega(y)}\right)^{\alpha +n -p}\right )^{\frac{1}{n(\alpha-\tau) + p \tau}} \leq 
\frac{1}{n_a-n}\,\(\dfrac{1}{p'}\dfrac{\nabla \omega(x)}{\omega(x)}+\dfrac{1}{p}\dfrac{\nabla \sigma(x)}{\sigma(x)}\)\cdot y\ \ \ \textrm{for a.e. $x\in E$ and all}\ y\in E.
\end{equation}
Choosing $y=x$ in \eqref{c-0-reduced} yields
\begin{equation}\label{alpha-tau-1}
1\leq 
\frac{1}{n_a-n}\(\dfrac{\tau}{p'}+\dfrac{\alpha}{p}\).
\end{equation}
Let us recall from the proof of Part (i) that 
$$ n_a - n = \frac{p\, \tau + n(\alpha - \tau)}{\tau - \alpha +p}.$$
This inserted into \eqref{alpha-tau-1} yields
$$ \frac{p\, \tau + n(\alpha - \tau)}{\tau - \alpha +p} \leq \tau + \frac{\alpha - \tau}{p},$$
which is equivalent to
$$ \left( \frac{n+\tau}{\alpha-\tau +p} - \frac{1}{p}\right) (\alpha - \tau) \leq 0.$$
Once again from the expression of $n_a$, the last inequality is 
equivalent to $(\alpha-\tau)(n_a-1)/p\leq 0$. Since $n_a>n\geq 2$, this implies that $\alpha\leq\tau$, and since by assumption $\tau\leq \alpha$, we conclude that  $\alpha= \tau$. 
In particular, we have that $n_a=n+\alpha$ and \eqref{c-0-reduced} reduces to 
\begin{equation}\label{c-0-reduced-bis}
\left( \left(\frac{\sigma(y)}{\sigma(x)}\right)^{\alpha +n} \left( \frac{\omega(x)}{\omega(y)}\right)^{\alpha +n -p}\right )^{\frac{1}{ p \alpha}} \leq 
\frac{1}{\alpha}\,\(\dfrac{1}{p'}\dfrac{\nabla \omega(x)}{\omega(x)}+\dfrac{1}{p}\dfrac{\nabla \sigma(x)}{\sigma(x)}\)\cdot y\ \ \ \textrm{for  a.e. $x\in E$ and all $y\in E$}.
\end{equation}

Let us define the function $f:E\to (0,\infty)$ given by $f(x)=\dfrac{\omega(x)}{\sigma(x)}$, $x\in E.$ Our task is to prove that $f$ is constant on $E$. 
To do this, we first rewrite \eqref{c-0-reduced-bis} in terms of $f$ and $\sigma$ to eliminate $\omega$. In this way we obtain
\begin{equation}\label{c-0-reduced-bis-bis}
\left[ \left(\frac{f(x)}{f(y)}\right)^{\alpha +n -p} \, \left( \frac{\sigma(y)}{\sigma(x)}\right)^{p}\right]^{\frac{1}{p\alpha}} 
\leq \frac{1}{\alpha}\left[ \frac{1}{p} \frac{\nabla \sigma (x)} {\sigma(x)} + \frac{1}{p'} \frac{f (x)\nabla \sigma(x) + \nabla f (x)\sigma(x) }{f(x) \cdot \sigma(x)} \right] \cdot y
\end{equation} 
for a.e $x\in E$ and for all $y\in E$.
Motivated by this inequality,  we define for a.e. $x\in E$ the function $g_x:E\to \mathbb R$  given by
$$g_x(y)=\frac{1}{\alpha}\,\(\dfrac{1}{p'}\dfrac{\nabla f(x)}{f(x)}+\dfrac{\nabla \sigma(x)}{\sigma(x)}\)\cdot y-\(\dfrac{\sigma(y)}{\sigma(x)}\)^\frac{1}{\alpha} \,\(\dfrac{f(x)}{f(y)}\)^\frac{n+\alpha}{\alpha q},\ \ y\in E.
$$
Clearly $g_x$ is continuous in $E$,
and since $\alpha=\tau$ and \eqref{eq:dimensional balance}, (\ref{c-0-reduced-bis-bis}) is equivalent to $g_x(y)\geq 0$ for a.e $x\in E$ and all $y\in E.$ Furthermore, since $f$ is homogeneous of degree zero and differentiable a.e., one has that $\nabla f(x)\cdot x=0$, and thus $g_x(x)=0$ for a.e. $x\in E$. In particular, for a.e. $x\in E$, the function $y\mapsto g_x(y)$  has a global minimum on $E$ at $y=x$ and since 
 $y\mapsto g_x(y)$ is differentiable at $y=x$, we obtain $\nabla g_x(y)|_{y=x}=0.$ 
 This means that for a.e. $x\in E$, one has
 $$\frac{1}{\alpha}\,\(\dfrac{1}{p'}\dfrac{\nabla f(x)}{f(x)}+\dfrac{\nabla \sigma(x)}{\sigma(x)}\)-\frac{1}{\alpha}\frac{\nabla\sigma(x)}{\sigma(x)}
 +\frac{n+\alpha}{\alpha q}\dfrac{\nabla f(x)}{f(x)}=0,$$
which is equivalent to
 $$\(\frac{1}{\alpha p'}+\frac{n+\alpha}{\alpha q}\)\dfrac{\nabla f(x)}{f(x)}=0\ \ {\rm for  \ a.e.}\ x\in E.$$
Since $\dfrac{1}{ p'}+\dfrac{n+\alpha}{ q}>0,$ it follows that 
\begin{equation}\label{nablaf=0}
 \nabla f(x)=0 \ \ \ {\rm for\ a.e.}\ x\in E. 
\end{equation}

 We are going to prove that $f$ is locally Lipschitz in $E$; once we do that, by \eqref{nablaf=0} we may conclude that $f$ is constant. To see this, let $h:E\to (0,\infty)$ be the continuous, a.e. differentiable function  given by 
\begin{equation}\label{h-function-def}
h(x)=\left( \dfrac{\sigma(x)^{\alpha +n}}{\omega(x)^{\alpha +n -p}} \right )^{\frac{1}{ p \alpha}},\ \ x\in E.
\end{equation}
From \eqref{nablaf=0} it follows that $\dfrac{\nabla \omega(x)}{\omega(x)}=\dfrac{\nabla \sigma(x)}{\sigma(x)}$ for a.e. $x\in E$.
A simple computation then shows that 
$\nabla h(x)=\dfrac{1}{\alpha}h(x)\,\dfrac{\nabla \omega(x)}{\omega(x)}\ \ \ \textrm{for \ a.e.}\ x\in E.$ Therefore, 
relation \eqref{c-0-reduced-bis} reduces to 
\begin{equation}\label{g-ben-derivalt-1}
{h(y)}\leq \nabla h(x)
\cdot y\ \ \ \textrm{for a.e. $x\in E$ and all $y\in E$}.
\end{equation}
Since $h$ is homogeneous of degree one, it follows by  \eqref{g-ben-derivalt-1} that
\begin{equation}\label{g-ben-derivalt-2}
{h(y)}-h(x)\leq \nabla h(x)
\cdot (y-x)\ \ \ \textrm{for a.e. $x\in E$ and all $y\in E$}.
\end{equation}
Now,  Lemma \ref{concave-characterization} implies that $h$ is concave in $E$, thus locally Lipschitz in $E$. By assumption, one of the weights is locally Lipschitz, thus the other one is locally Lipschitz too. 
In particular, $f$ is also locally Lipschitz in $E$, and so from \eqref{nablaf=0} we conclude the proof that $f$ is constant. 
Hence $\omega=c\sigma$ in $E$ for some $c>0$, and so $h(x)=c^\frac{p-\alpha-n}{p\alpha}\sigma(x)^\frac{1}{\alpha}$ for every $x\in E$. Therefore $\sigma^\frac{1}{\alpha}$ is concave in $E$ concluding the proof. 
%
%
%
%
\end{proof}

{\it Proof of Theorem \ref{main-theorem-1-equality}.}
Let us assume that equality holds in \eqref{WSI} for some $u\in \dot W_\sigma^{1,p}(\mathbb R^n)\setminus \{0\}$, and without loss of generality, assume that $u$ is nonnegative with $$\int_E u(x)^q\,\omega(x)\,dx=1.$$ 
A similar argument as in \cite[Proposition 6]{CENV} implies that $\Delta_{\mathcal D'}\phi$ is absolutely continuous on $E_0$, where $E_0$ denotes the interior of the  set $\{x\in \mathbb R^n:\phi(x)<+\infty\}.$ We notice that $U\cap E={\rm supp}(u)\cap E\subset \overline{E_0}.$

To prove Theorem \ref{main-theorem-1-equality} we discuss separately the equality cases for $p>1$ (see Theorem \ref{main-theorem-1}) and $p=1$ (see Theorem \ref{main-theorem-2}), respectively.

\subsection{Case $p>1$} We split the proof into several cases. 

\textit{Case 1}: condition \ref{item:C_0} holds, $p>1$ and $n_a<+\infty$. \\
Since $u$ gives equality in \eqref{WSI}, we must have equality in each step in the proof of Lemma \ref{lm:condition for two weights}/(i), Case 1. 
In particular, we have equality in the AM-GM inequality  $\det D^2_A\phi(x)\leq
\(\dfrac{\Delta_A \phi(x)}{n}\)^{n}$ for $\mu$-a.e. $x\in E$   (recall that $\mu=u^q\,\omega\,dx$), thus identifying $D^2_A\phi$ with $D^2_{\mathcal D'}\phi$, it turns out that $D^2_A\phi(x)=\lambda I_n$ for a.e. $x\in E$, where $\lambda>0$ and $I_n$ is the $n\times n$-identity matrix. 
Therefore,  for some $x_0\in \mathbb R^n$, one has
\begin{equation}\label{equality-1}
\nabla \phi(x)= \lambda x+x_0\ \ {\rm for \ a.e.}\ \ x\in E\cap E_0.
\end{equation}
Since $\nabla \phi(\overline E)\subseteq \overline E$ and $0\in \overline E$, we necessarily have that $x_0\in \overline E$.

The equality in the second AM-GM inequality in the proof of Lemma \ref{lm:condition for two weights}/(i) yields 
$$
\(\dfrac{\omega\(\nabla \phi(x)\)^{-1/q}\,
	\sigma\(\nabla \phi(x)\)^{1/p}}{\omega(x)^{-n/qn_a}\,\sigma(x)^{n/pn_a}}\)^{n_a/(n_a-n)}
=\dfrac{\Delta_A \phi(x)}{n}\,\omega(x)^{-1/q}\,\sigma(x)^{1/p}\ \ {\rm for \ a.e.}\ \ x\in E\cap E_0.$$
By rearranging the last equation, combined with  $\Delta_A \phi(x)=\lambda n$ for a.e. $x\in E\cap E_0$ and \eqref{equality-1},  it follows that
\begin{equation}\label{equality-2}
{\omega\(\lambda x+x_0\)^{-1/q}\,
	\sigma\(\lambda x+x_0\)^{1/p}}
=\lambda^{(n_a-n)/n_a}\omega(x)^{-1/q}\,\sigma(x)^{1/p}\ \ {\rm for \ a.e.}\ \ x\in E\cap E_0.\end{equation} 

When we apply condition \ref{item:C_0} in the proof of Lemma \ref{lm:condition for two weights}/(i), the equality means that for a.e. $ x\in E\cap E_0$, we have
$$\omega\(\nabla \phi(x)\)^{-1/q}\,\sigma\(\nabla \phi(x)\)^{1/p}= \(C_0\(\dfrac{1}{p'}\dfrac{\nabla \omega(x)}{\omega(x)}+\dfrac{1}{p}\dfrac{\nabla \sigma(x)}{\sigma(x)}\)\cdot \nabla \phi(x)\)^{(n_a-n)/n_a}\,\omega(x)^{-1/q}\sigma(x)^{1/p}.$$
Thus, by \eqref{equality-1} and \eqref{equality-2}, it turns out that 
$$\lambda=C_0 \(\dfrac{1}{p'}\dfrac{\nabla \omega(x)}{\omega(x)}+\dfrac{1}{p}\dfrac{\nabla \sigma(x)}{\sigma(x)}\)\cdot \(\lambda x+x_0\)\ \ {\rm for \ a.e.}\ \ x\in E\cap E_0.$$
By \eqref{eq:homogeneity of weights}, the last relation is equivalent  to 
\begin{equation}\label{equality-3}
1=C_0\(\dfrac{\tau}{p'}+\dfrac{\alpha}{p}+I_0(x)\)\ \ {\rm for \ a.e.}\ \ x\in E\cap E_0,
\end{equation}
where $$I_0(x)=\lambda^{-1}\(\dfrac{1}{p'}\dfrac{\nabla \omega(x)}{\omega(x)}+\dfrac{1}{p}\dfrac{\nabla \sigma(x)}{\sigma(x)}\)\cdot x_0.$$
By using  condition \ref{item:C_0} for $y:=y_k$, where $\{y_k\}_k\subset E$  is a sequence converging to $x_0\in \overline E$, we immediately obtain that $I_0(x)\geq 0$ for a.e. $x\in E.$ Therefore, by \eqref{equality-3} we have that 
\begin{equation}\label{equality-4}
1\geq C_0\(\dfrac{\tau}{p'}+\dfrac{\alpha}{p}\).
\end{equation}
Finally, in the last estimate of the proof of Lemma \ref{lm:condition for two weights}/(i), the equality requires $$C_0\,\(1-\frac{n}{n_a}\)=\dfrac{1}{n_a},$$
i.e.,  
\begin{equation}\label{C-0-value}
C_0=\frac{1}{n_a-n}.
\end{equation}
This means that we have  $\frac{\tau}{p'} + \frac{\alpha}{p} \leq n_a-n$ that is precisely the reverse inequality to \eqref{alpha-tau-1}. 
A similar reasoning as before using  \eqref{equality-4} together with  \eqref{C-0-value} imply 
now the reverse conclusion, that is $\tau\leq \alpha.$

We also notice that $\alpha>0$. Indeed,  if we assume that $\alpha\leq 0$,  we would have  $\tau\leq \alpha\leq 0$ and by picking $y=x\in E$ in \ref{item:C_0}, it follows  
$1\leq C_0\(\frac{\tau}{p'}+\frac{\alpha}{p}\)\leq 0;$ a contradiction. 
 
 Summing up, from the above arguments one concludes that condition \ref{item:C_0} holds with $C_0=\frac{1}{n_a-n}$ and $\tau\leq \alpha$ with $\alpha>0.$ But now from  Proposition \ref{prop-1}/(ii) it follows that there exists $c>0$ such that $\omega(x)=c\sigma(x)$ for every $x\in E$ (thus $\alpha=\tau$ and $n_a=n+\alpha$) and $\sigma^{1/\alpha}$ is concave in $E$. 
 
 Now, we are precisely in the setting of \cite[Theorem A.1]{CFR}. In particular, by 
 the equality in the H\"older inequality, it follows that the extremal function satisfies 
 $|\nabla u(x)|^p\sigma(x)=c_0u(x)^q|x+x_0|^{p'}\omega(x)$ for some $c_0>0$ and every $x\in E$.
 Thus, $|\nabla u(x)|^p=c_0cu(x)^q|x+x_0|^{p'}$,
  obtaining that the extremal function in \eqref{WSI} is 
 $u(x):=u_\gamma(x)=(\gamma+|x+x_0|^{p'})^{-\frac{n+\alpha-p}{p}},$  $\gamma>0$.
We notice that  \eqref{equality-3} reduces to $I_0(x)=0$ for a.e $x\in E$, thus $x_0\in \overline E$ verifies $\nabla \omega(x)\cdot x_0=\nabla \sigma(x)\cdot x_0=0$  for a.e. $x\in E.$ In this way, \eqref{WSI} takes the more familiar form (with only one weight)
\begin{equation}\label{Sob-1}
\(\int_E |u(x)|^q\,\sigma(x) \,dx\)^{1/q}\leq  \tilde K_0\,\(\int_E |\nabla u(x)|^p\,\sigma(x)\,dx\)^{1/p}\ \ \textrm{for all}\ u\in \dot W_\sigma^{1,p}(\mathbb R^n),
\end{equation}
 where 
 \begin{equation}\label{Sob-2}
 \tilde K_0=\frac{p(n_a-1)}{n_a(n_a-p)}\dfrac{\(\int_E u_\gamma(y)^q\,|y|^{p'}\sigma(y)\,dy\)^\frac{1}{p'}\(\int_E u_\gamma(y)^q\sigma(y)\,dy\)^\frac{1}{q}}{\int_E u_\gamma(y)^{q(1-\frac{1}{n_a})}
 	\sigma\(y\)\,dy}
 \end{equation}
 is the best constant in \eqref{Sob-1} (not depending on $\gamma>0$). 

\textit{Case 2}: condition \ref{item:C_0} holds, $p>1$ and $n_a=+\infty$. \\
In order to have equality in \eqref{WSI}, we must have equality in the proof of  Lemma \ref{lm:condition for two weights}/(i), Case 3.  In particular, we have  $\Delta_A \phi(x)= 0$ for a.e. $x\in E,$ which leads us to the degenerate case $\nabla \phi(x)=x_0$ for a.e. $x\in E,$ for some $x_0\in E$, which is not compatible with the Monge-Amp\`ere equation \eqref{eq:MA equation two weights-1-0}. Therefore, no equality can be obtained in \eqref{WSI}. 

\textit{Case 3}: condition \ref{item:C_1} holds and $p>1$. \\
Equality in \eqref{WSI} requires equality in each estimate in the proof of  Lemma \ref{lm:condition for two weights}/(ii), Case 1.
First, as before, the equality in the AM-GM inequality  $\det D^2_A\phi(x)\leq
\(\dfrac{\Delta_A \phi(x)}{n}\)^{n}$ for $\mu$-a.e. $x\in E$ yields
\begin{equation}\label{equality-2-0}
\nabla \phi(x)= \lambda x+x_0\ \ {\rm for \ a.e.}\ \ x\in E\cap E_0
\end{equation}
for some $\lambda>0$ and $x_0\in \overline E$.  The equality in the second estimate, where \eqref{kozbeeso} is applied, together with the continuity of the weights $\sigma$ and $\omega$ implies that 
\begin{equation}\label{kozbeeso-2}
\omega(x)^{\frac{1}{q}}= C_1\sigma(x)^{1/p}\ \ {\rm for\ all}\ x\in E,
\end{equation}
where $C_1>0$ is the constant in condition
\ref{item:C_1}.	Furthermore, the equality when we apply condition
\ref{item:C_1}  requires
$$\(\dfrac{1}{p'}\dfrac{\nabla \omega(x)}{\omega(x)}+\dfrac{1}{p}\dfrac{\nabla \sigma(x)}{\sigma(x)}\)\cdot  (\lambda x+x_0)=0\ \ {\rm for \ a.e.}\ \ x\in E\cap E_0.$$
A similar argument as before shows that  the latter relation can be transformed equivalently into  
	\begin{equation}\label{equality-3-0}
	\dfrac{\tau}{p'}+\dfrac{\alpha}{p}+I_0(x)=0\ \ {\rm for \ a.e.}\ \ x\in E\cap E_0,
	\end{equation}
	where $$I_0(x)=\lambda^{-1}\(\dfrac{1}{p'}\dfrac{\nabla \omega(x)}{\omega(x)}+\dfrac{1}{p}\dfrac{\nabla \sigma(x)}{\sigma(x)}\)\cdot x_0.$$
	By  condition \ref{item:C_1}, it is clear that $\dfrac{\tau}{p'}+\dfrac{\alpha}{p}\geq 0$ (taking $y=x$) and $I_0(x)\geq 0$ for a.e. $x\in E$ (taking $y:=y_k$ where $\{y_k\}_k\subset E$ converges to $x_0$). Therefore, by \eqref{equality-3-0} we have that 
$
	\dfrac{\tau}{p'}+\dfrac{\alpha}{p}= 0$  and $ I_0(x)=0$   for  a.e. $x\in E\cap E_0.
$
Since $n_a=n$, it follows that $\dfrac{\tau}{q}=\dfrac{\alpha}{p}$; this relation combined with $\dfrac{\tau}{p'}+\dfrac{\alpha}{p}= 0$ gives that $\tau=\alpha=0$. 

Due to \eqref{kozbeeso-2}, condition \ref{item:C_1} implies that 
\begin{equation}\label{inequality-1-2}
\nabla \omega(x)\cdot y\geq 0,\ \ \nabla \sigma(x)\cdot y\geq 0\ \ {\rm for\ a.e}\ x\in E\ {\rm and\ all}\ y\in E.
\end{equation}
Let $x\in E$ be any differentiability point of $\omega$ and fix $\rho>0$ small enough such that $x+B_\rho\subset E.$ Applying \eqref{inequality-1-2} for $y:=x+z$ with arbitrarily $z\in B_\delta$ and using the fact that $\nabla \omega(x)\cdot x=0$  (since $\tau=0$), it follows that $\nabla \omega(x)\cdot z\geq 0$ for every $z\in B_\delta$. This holds in fact for every $z \in \R^n$,  which implies that $\nabla \omega(x)=0$. 
Since $\omega$ is locally Lipschitz (thanks to our assumption an  \eqref{kozbeeso-2}), the latter relation implies  $\omega$ is a constant, $\omega\equiv c_\omega>0$; in a similar way, $\sigma\equiv c_\sigma>0$. By \eqref{kozbeeso-2}, one has $c_\omega^\frac{1}{q}=C_1c_\sigma^\frac{1}{p}$. We also notice that $x_0$ can be arbitrarily fixed in $\overline E.$ 

A similar argument as in Case 1 shows that when we use  
H\"older inequality in the proof of Theorem \ref{main-theorem-1}/(ii), the equality case implies  that the extremal function verifies 
$|\nabla u(x)|^p=c_1u(x)^q|x+x_0|^{p'}$ for some $c_1>0$ and every $x\in E$. The rest is the same as in \eqref{Sob-1} and \eqref{Sob-2}, where we may choose without loss of generality $\sigma=1;$ in fact, \eqref{Sob-1} is a Talenti-type sharp Sobolev inequality on convex cones. 

%
%

\subsection{Case $p=1$}
We now turn our attention to analyze the equality cases in Theorem \ref{main-theorem-2}. Since the proof is similar to the case $p>1$, we outline only the differences. 

\textit{Case 1}: condition \ref{item:C_0} holds, $p=1$ and $n_a<+\infty$. \\
We follow the proof of Lemma \ref{lm:condition for two weights}/(i), Case 2. 
First,  for some $\lambda>0$ and $x_0\in \overline E$ one has that
$\nabla \phi(x)= \lambda x+x_0$   for  a.e. $x\in E\cap E_0.$ Similarly to \eqref{equality-2}, one necessarily has that 
$${\omega\(\lambda x+x_0\)^{-1/q}\,
	\sigma\(\lambda x+x_0\)}
=\lambda^{(n_a-n)/n_a}\omega(x)^{-1/q}\,\sigma(x)\ \ {\rm for \ a.e.}\ \ x\in E\cap E_0.$$
Furthermore, it follows that 
$$\lambda=C_0 \dfrac{\nabla \sigma(x)}{\sigma(x)}\cdot \(\lambda x+x_0\)\ \ {\rm for \ a.e.}\ \ x\in E\cap E_0,$$
which can be written as
$$1=C_0(\alpha+I_0(x))\ \ {\rm for \ a.e.}\ \ x\in E\cap E_0,$$
where $I_0(x)=\lambda^{-1}\dfrac{\nabla \sigma(x)}{\sigma(x)}\cdot x_0.$ Since $I_0(x)\geq 0$ for a.e. $x\in E$ (due to condition \ref{item:C_0} for $p=1$), it follows that $C_0\alpha\leq 1.$ Clearly, condition \ref{item:C_0} for $p=1$ and $y=x$ gives that $1\leq C_0\alpha.$ Thus $C_0\alpha= 1.$ On the other hand, we must also have $C_0\(1-\dfrac{n}{n_a}\)=\dfrac{1}{n_a},$
i.e.,  $
C_0=\dfrac{1}{n_a-n}.$ Consequently, we obtain $\dfrac{1}{n_a-n}= \dfrac{1}{\alpha}$, which is equivalent to $(\alpha-\tau)(n+\alpha-p)= 0$. Due to \eqref{eq:p-range}, it follows that $\alpha= \tau$. We can apply again Proposition \ref{prop-1}/(ii) to obtain the existence of $c>0$ such that $\omega(x)=c\sigma(x)$ for every $x\in E$,  and the $\sigma^{1/\alpha}$ is concave in $E$. In this way, \eqref{WSI} reduces to 
\begin{equation}\label{Sob-1-0}
\(\int_E |u(x)|^\frac{n_a}{n_a-1}\,\sigma(x) \,dx\)^{1-\frac{1}{n_a}}\leq  \tilde K_0\,\int_E |\nabla u(x)|\,\sigma(x)\,dx\ \ \textrm{for all}\ u\in \dot W_\sigma^{1,1}(\mathbb R^n),
\end{equation}
where 
\begin{equation}\label{Sob-2-0}
\tilde K_0=\dfrac{1}{n_a}
{\(\int_{B\cap E} \sigma(y)\,dy\)^{-\frac{1}{n_a}}}.
\end{equation}
The constant $\tilde K_0$ in \eqref{Sob-1-0} is sharp. Indeed, according to \cite[rel. (1.14), p. 2977]{Cabre-Ros-Oton-Serra}, one has  
 $P_\sigma(B,E)=(n+\alpha)\int_{B\cap E} \sigma(x)dx$. 
  Since $n_a=n+\alpha,$ by considering  $u(x):=\mathbbm{1}_{B\cap E}(x)$, it yields 
\begin{eqnarray*}
\int_E |\nabla u(x)|\,\sigma(x)\,dx&=&P_\sigma(B,E)=n_a\int_{B\cap E} \sigma(x)dx=\tilde K_0^{-1}\(\int_{B\cap E} \sigma(x) \,dx\)^{1-\frac{1}{n_a}}\\&=&\tilde K_0^{-1}\(\int_E |u(x)|^\frac{n_a}{n_a-1}\sigma(x) \,dx\)^{1-\frac{1}{n_a}},
\end{eqnarray*}
which gives equality in \eqref{Sob-1-0}. 

\textit{Case 2}: condition \ref{item:C_0} holds, $p=1$ and $n_a=+\infty$. \\
 We must have equality in the proof of  Lemma \ref{lm:condition for two weights}/(i), Case 4. Thus we have  $\Delta_A \phi(x)= 0$ for a.e. $x\in E,$ which contradicts again the Monge-Amp\`ere equation \eqref{eq:MA equation two weights-1-0}. Thus, no equality can be obtained in \eqref{WSI}.

\textit{Case 3}: condition \ref{item:C_1} holds and $p=1$. \\
The discussion is similar to Case 3 with $p>1$, obtaining that equality in \eqref{WSI} implies that both $\omega$ and $\sigma$ are constant, $\omega\equiv c_\omega>0$,  $\sigma\equiv c_\sigma>0$, and  $c_\omega^\frac{1}{q}=C_1c_\sigma$, where $C_1>0$ is the constant in condition \ref{item:C_1}. Therefore, \eqref{WSI} becomes the (usual) sharp isoperimetric inequality on the cone $E$. This concludes the proof of Theorem \ref{main-theorem-1-equality}. 
 \hfill $\square$
 

\section{Examples and applications}\label{sec:examples and applications}

In this section we illustrate the application of Theorems \ref{main-theorem-1}-\ref{main-theorem-1-equality} to various examples. 

\subsection{Weights satisfying conditions \ref{item:C_0} and \ref{item:C_1}}\label{subsection5-1}
	\subsubsection{Monomial weights}
	We first discuss  the validity of condition \ref{item:C_0} for \textit{monomial} weights to recover from our statements the results of  \cite{Castro}, \cite{Cabre-Ros-Oton}
	and \cite{Nguyen}. 
More precisely, let  $\tau_i\in \mathbb R$ and $\alpha_i\geq 0$, $i=1,...,n$; $\tau=\tau_1+...+\tau_n$ and $\alpha=\alpha_1+...+\alpha_n$ be such that
\begin{equation}\label{assumptions-2-0}
\gamma_i:=\frac{\tau_i}{p'}+\frac{\alpha_i}{p}\geq 0\ \ {\rm and}\ \  \beta_i:=\frac{\alpha_i}{p}- \frac{\tau_i}{q}\geq 0,\ \ i=1,...,n,
\end{equation}
where $q=\frac{p(\tau+n)}{\alpha+n-p}$ with  the property that if $\gamma_i= 0$ for some $i\in \{1,...,n\}$ then $\tau_i=\alpha_i=0$. 

We consider the convex cone  
	\begin{equation}\label{E-set-0}
	E=\left\{x=(x_1,...,x_n)\in \mathbb R^n: x_i>0 \ {\rm whenever}\  \frac{\tau_i}{p'}+\frac{\alpha_i}{p}> 0\right\},
	\end{equation} and the weights $\omega(x)=x_1^{\tau_1}\cdots x_n^{\tau_n}\ \ {\rm  and}\ \ \sigma(x)=x_1^{\alpha_1}\cdots x_n^{\alpha_n}, \ x=(x_1,...,x_n)\in E.$ 


\begin{proposition}\label{prop-0} Assume that $n_a>n.$ Let  $E\subseteq \mathbb R^n$ be the convex cone given in \eqref{E-set-0} and  $\omega(x)=x_1^{\tau_1}\cdots x_n^{\tau_n}$ and $ \sigma(x)=x_1^{\alpha_1}\cdots x_n^{\alpha_n}$ for every $x=(x_1,...,x_n)\in E$. Then condition 
{\rm \ref{item:C_0}} holds with the constant 
\begin{equation}\label{c-0-monomial}
C_0= \frac{n_a}{n_a-n}\(\(\frac{\beta_1}{\gamma_1}\)^{\beta_1}\cdots \(\frac{\beta_n}{\gamma_n}\)^{\beta_n}\)^\frac{n_a}{n_a-n}.
\end{equation}
Here we use the convention $0^0=1.$
\end{proposition}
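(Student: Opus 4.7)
The plan is to reduce condition \ref{item:C_0} for monomial weights to a weighted arithmetic--geometric mean inequality. First, I would compute both sides of \eqref{c-0-condition} explicitly. Since $\omega(x)=\prod x_i^{\tau_i}$ and $\sigma(x)=\prod x_i^{\alpha_i}$ on $E$, the logarithmic derivatives are $\nabla\omega(x)/\omega(x)=(\tau_i/x_i)_i$ and $\nabla\sigma(x)/\sigma(x)=(\alpha_i/x_i)_i$, so with $z_i:=y_i/x_i$ for $i$ with $\gamma_i>0$, the right-hand side of \eqref{c-0-condition} equals $C_0\sum_{i=1}^n \gamma_i z_i$. Setting $\delta_i:=\beta_i\,n_a/(n_a-n)$, the left-hand side equals $\prod_{i} z_i^{\delta_i}$ (using $0^0=1$ when $\tau_i=\alpha_i=0$, which is the only way $\gamma_i$ can vanish under the standing assumption). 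Thus the whole statement reduces to proving
$$
\prod_{i=1}^n z_i^{\delta_i}\ \leq\ C_0\,\sum_{i=1}^n \gamma_i z_i,\qquad z_i\geq 0,
$$
for the claimed $C_0$.

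Next I would verify the normalization $\sum_i\delta_i=1$. Using \eqref{eq:dimensional balance} and \eqref{eq:fracdim} one has $\sum_i\beta_i=\alpha/p-\tau/q=1-n/n_a=(n_a-n)/n_a$, so indeed $\sum_i\delta_i=1$. Note also that $\beta_i>0$ forces $\gamma_i>0$ (otherwise $\tau_i=\alpha_i=0$ and $\beta_i=0$), so the ratio $\gamma_i/\delta_i$ is well-defined whenever $\delta_i>0$. The weighted AM--GM inequality applied to the quantities $\gamma_i z_i/\delta_i$ with weights $\delta_i$ (restricted to indices with $\delta_i>0$) then yields
$$
\prod_{i}\left(\frac{\gamma_i z_i}{\delta_i}\right)^{\delta_i}\ \leq\ \sum_{i}\delta_i\cdot \frac{\gamma_i z_i}{\delta_i}\ =\ \sum_{i}\gamma_i z_i,
$$
which after rearranging is exactly the desired inequality with
$$
C_0=\prod_{i}\left(\frac{\delta_i}{\gamma_i}\right)^{\delta_i}.
$$

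Finally, substituting $\delta_i=\beta_i\,n_a/(n_a-n)$ and factoring out $(n_a/(n_a-n))^{\sum_i\delta_i}=n_a/(n_a-n)$ gives the formula \eqref{c-0-monomial}. The main bookkeeping obstacle is dealing with the possibly vanishing coefficients: indices $i$ with $\gamma_i=0$ are harmless because by hypothesis $\tau_i=\alpha_i=0$, so these variables do not appear in $\omega,\sigma$ and the corresponding factors collapse to $1$ under the $0^0=1$ convention; indices with $\gamma_i>0$ but $\beta_i=0$ contribute the trivial factor $z_i^0=1$ on the left and are simply absent from the AM--GM step. Everything else is a direct computation, and no additional regularity on the weights is needed beyond what is assumed in the statement.
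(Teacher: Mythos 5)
Your proof is correct and follows essentially the same route as the paper's: both reduce condition \ref{item:C_0} for monomial weights to a weighted AM--GM inequality in the ratios $z_i=y_i/x_i$, using that $\sum_i\beta_i=\frac{\alpha}{p}-\frac{\tau}{q}=1-\frac{n}{n_a}$ so the exponents sum to one, and both handle vanishing $\gamma_i$ via the hypothesis $\gamma_i=0\Rightarrow\tau_i=\alpha_i=0$. The only point you leave implicit is the case $n_a=+\infty$ (also allowed under $n_a>n$, when $p=q$), which the paper treats separately: there condition \ref{item:C_0} must be read in the reduced form \eqref{c-0-reduced-0} of Remark \ref{remark-1}(ii), and the same AM--GM argument applies with $\sum_i\beta_i=1$, giving $C_0=\prod_i\left(\beta_i/\gamma_i\right)^{\beta_i}$, consistent with \eqref{c-0-monomial} in the limit.
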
 

\begin{proof} We first assume that  $n_a<\infty.$ Let $x=(x_1,...,x_n)\in E$ and $y=(y_1,...,y_n)\in E$  be fixed. By the scaling invariance relation (\ref{eq:dimensional balance}) and the form of $\beta_i$, we have that 
	$$\beta_1+...+\beta_n+\frac{n}{n_a}=1.$$
	Then, by using the weighted AM-GM inequality, it follows that 
\begin{eqnarray*}
\(\(\dfrac{\sigma(y)}{\sigma(x)}\)^{1/p} \,\(\dfrac{\omega(x)}{\omega(y)}\)^{1/q}\)^{n_a/(n_a-n)}&=& \(\(\frac{y_1}{x_1}\)^{\beta_1}\cdots \(\frac{y_n}{x_n}\)^{\beta_n}\)^{\frac{1}{\beta_1+...+\beta_n}}\\&=&
\(\(\frac{\beta_1}{\gamma_1}\)^{\beta_1}\cdots \(\frac{\beta_n}{\gamma_n}\)^{\beta_n}\)^\frac{1}{\beta_1+...+\beta_n}
\(\frac{\gamma_1}{\beta_1}\frac{y_1}{x_1}\)^{\frac{\beta_1}{\beta_1+...+\beta_n}}\cdots \(\frac{\gamma_n}{\beta_n}\frac{y_n}{x_n}\)^{\frac{\beta_n}{\beta_1+...+\beta_n}}
\\&
\leq&\frac{1}{\beta_1+...+\beta_n}\(\(\frac{\beta_1}{\gamma_1}\)^{\beta_1}\cdots \(\frac{\beta_n}{\gamma_n}\)^{\beta_n}\)^\frac{1}{\beta_1+...+\beta_n}\(\gamma_1\frac{y_1}{x_1}+...+\gamma_n\frac{y_n}{x_n}\)\\&=&
C_0\,\(\dfrac{1}{p'}\dfrac{\nabla \omega(x)}{\omega(x)}+\dfrac{1}{p}\dfrac{\nabla \sigma(x)}{\sigma(x)}\)\cdot y,
\end{eqnarray*}	
	where $$C_0=\frac{1}{\beta_1+...+\beta_n}\(\(\frac{\beta_1}{\gamma_1}\)^{\beta_1}\cdots \(\frac{\beta_n}{\gamma_n}\)^{\beta_n}\)^\frac{1}{\beta_1+...+\beta_n},$$
	which ends the proof. 
	
	When $n_a=+\infty$ (i.e., $p=q$, which is equivalent to $\alpha=p+\tau$), we have that 
$\beta_1 + \ldots + \beta_n =1$. The same proof as before using the AM-GM inequality shows that condition 
	{\rm \ref{item:C_0}} holds (see (\ref{c-0-reduced-0}) in Remark \ref{remark-1}/(ii)) with the constant 
	$$C_0=\(\frac{\beta_1}{\gamma_1}\)^{\beta_1}\cdots \(\frac{\beta_n}{\gamma_n}\)^{\beta_n},$$ which agrees  with \eqref{c-0-monomial} whenever $n_a\to\infty.$
\end{proof}

From the last proposition we have the following corollary of our main theorems. 

\begin{corollary} \label{cor:monomial case}
Let  $\tau_i\in \mathbb R$ and $\alpha_i\geq 0$, $i=1,...,n,$ and $\tau=\tau_1+...+\tau_n$ and $\alpha=\alpha_1+...+\alpha_n$. Consider the convex cone  given  in \eqref{E-set-0}
	 and the weights $\omega(x)=x_1^{\tau_1}\cdots x_n^{\tau_n}\ \ {\rm  and}\ \ \sigma(x)=x_1^{\alpha_1}\cdots x_n^{\alpha_n}, \ x=(x_1,...,x_n)\in E.$ 
	 If conditions \eqref{eq:p-range}, \eqref{eq:p-range-bis} and  \eqref{assumptions-2-0} hold and $q=\frac{p(\tau+n)}{\alpha+n-p}$, then \eqref{WSI} holds.  
In addition, if $\omega = \sigma$ then the constant $K_0$ arising in \eqref{WSI} is optimal.

\end{corollary}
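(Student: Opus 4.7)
The plan is to reduce the corollary to Theorems \ref{main-theorem-1}, \ref{main-theorem-2} and \ref{main-theorem-1-equality} via Proposition \ref{prop-0}, so almost all of the work is already done. First I would verify that the monomial pair $(\omega,\sigma)$ fits the general framework: the homogeneity conditions \eqref{eq:homogeneity of weights} hold by construction with exponents $\tau = \sum \tau_i$ and $\alpha = \sum \alpha_i$; local integrability on $\overline{E}$ follows from $\alpha_i \geq 0$ and from the choice of $E$ which excludes those coordinate hyperplanes along which $\omega$ (or $\sigma$) could fail to be integrable; continuity and a.e.\ differentiability are obvious in the interior of $E$; and the dimensional balance \eqref{eq:dimensional balance} is automatic from the definition $q = p(\tau+n)/(\alpha+n-p)$. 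The hypotheses \eqref{eq:p-range}, \eqref{eq:p-range-bis} are assumed directly.

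Next I would check the structural condition. In the generic case $n_a > n$, Proposition \ref{prop-0} supplies condition \ref{item:C_0} with the explicit constant from \eqref{c-0-monomial}, so Theorem \ref{main-theorem-1}(i) (for $p>1$) or Theorem \ref{main-theorem-2}(i) (for $p=1$) yields \eqref{WSI}. In the boundary case $n_a = n$ (equivalently $\beta_i = 0$ for each $i$, i.e.\ $\alpha_i/p = \tau_i/q$), one has $\omega^{1/q}/\sigma^{1/p} \equiv 1$ on $E$, so condition \ref{item:C_1} is satisfied with $C_1 = 1$ (the inequality \eqref{c-1-feltetel} reduces to $\sum \gamma_i y_i/x_i \geq 0$, which holds on $E$ by the sign assumption on the $\gamma_i$), and Theorem \ref{main-theorem-1}(ii) or \ref{main-theorem-2}(ii) applies. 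This disposes of the first assertion.

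For optimality when $\omega = \sigma$, I would invoke Theorem \ref{main-theorem-1-equality}(i). Three items must be verified at $\omega=\sigma$: first, that the $C_0$ produced by Proposition \ref{prop-0} coincides with $1/(n_a - n)$; second, that $\sigma^{1/\alpha}$ is concave on $E$; and third, that a genuine extremal in $\dot W_\sigma^{1,p}(\mathbb R^n)$ exists. The first is a short computation, since $\omega=\sigma$ forces $\tau_i = \alpha_i$, hence $\gamma_i = \alpha_i$ and $\beta_i = \alpha_i/n_a$, so $\beta_i/\gamma_i = 1/n_a$, giving
\[
\Bigl(\tfrac{\beta_1}{\gamma_1}\Bigr)^{\beta_1}\cdots \Bigl(\tfrac{\beta_n}{\gamma_n}\Bigr)^{\beta_n} = n_a^{-(1-n/n_a)},
\]
so that the expression in \eqref{c-0-monomial} collapses to $C_0 = 1/(n_a-n)$. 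The second is the classical concavity of the weighted geometric mean $\prod x_i^{\alpha_i/\alpha}$ on the positive orthant. For the third, one exhibits the Talenti-type extremal $u_\gamma(x) = (\gamma + |x|^{p'})^{-(n+\alpha-p)/p}$ identified in Case 1 of the proof of Theorem \ref{main-theorem-1-equality} (or the indicator $\mathbbm{1}_{B \cap E}$ when $p=1$); these belong to $\dot W_\sigma^{1,p}(\mathbb R^n)$ and realize equality by the argument already carried out there.

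The only mildly delicate point I anticipate is the case analysis $n_a = n$ versus $n_a > n$, together with the degenerate monomials for which some $\tau_i$ or $\alpha_i$ vanishes: one must confirm that the convention $0^0 = 1$ in Proposition \ref{prop-0} is used consistently and that the excluded coordinate hyperplanes (those $i$ with $\gamma_i = 0$) carry $\tau_i = \alpha_i = 0$, so that those factors drop out of both weights and of $C_0$. Once this bookkeeping is settled, the corollary follows directly from the three main theorems with no further analysis.
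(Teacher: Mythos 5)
Your proposal is correct and takes essentially the same route as the paper: the inequality is reduced to Theorems \ref{main-theorem-1} and \ref{main-theorem-2} via Proposition \ref{prop-0}, and optimality for $\omega=\sigma$ follows from Theorem \ref{main-theorem-1-equality}/(i) after checking that the constant \eqref{c-0-monomial} collapses to $C_0=\frac{1}{n_a-n}$ and that $\sigma^{1/\alpha}$ is concave. If anything you are slightly more thorough than the paper's brief argument, since you explicitly handle the boundary case $n_a=n$ (not covered by Proposition \ref{prop-0}, which assumes $n_a>n$) through condition \ref{item:C_1} with $C_1=1$, and you spell out the extremal functions guaranteeing sharpness.
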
 

\begin{proof} 
The first conclusion follows directly from Theorems \ref{main-theorem-1} and  \ref{main-theorem-2} taking into account Proposition \ref{prop-0}. 

To obtain the second conclusion we use Theorem \ref{main-theorem-1-equality}/(i).  Notice that 
	when $\tau_i=\alpha_i$, $i=1,...,n$, one has that $n_a=n+\alpha_1+...+\alpha_n$, $\beta_i=\frac{\alpha_i}{n_a}$ and $\gamma_i=\alpha_i$, $i=1,...,n$, while the convex cone introduced in (\ref{E-set-0}) becomes 
	$$E=\left\{x=(x_1,...,x_n)\in \mathbb R^n: x_i>0 \ {\rm whenever}\  \alpha_i> 0\right\}.$$
	In this case the constant in Proposition \ref{prop-0} reduces to $C_0=\frac{1}{n_a-n}.$ 
\end{proof}

\begin{remark}\rm
The first conclusion of Corollary \ref{cor:monomial case} covers the main result in  \cite[Theorem 1]{Castro} with a slightly different notation.  The second conclusion shows that the main results \cite[Theorem 1.3]{Cabre-Ros-Oton} and \cite[Theorem 4.2, $\theta=1$]{Nguyen} are also particular cases of our results. 
\end{remark}

\begin{remark}\rm Let $E=(0,\infty)^n$ for any $n\geq 2$. 
	
	(a)  If $\omega(x_1,...,x_n)=\sqrt[n]{x_1\cdots x_n}$ and $\sigma(x_1,...,x_n)=x_1+...+x_n$,   $(x_1,...,x_n)\in E,$  the pair $(\omega,\sigma)$ does not satisfy condition \ref{item:C_0}. However, since $\omega\leq \sigma/n$,  Proposition \ref{prop-0} provides (a non-optimal)  \eqref{WSI} for the weights $\omega$ and $\sigma$; the corresponding constant $K_0>0$ in \eqref{WSI} can be obtained by using the monomial setting, see \cite{Cabre-Ros-Oton, Nguyen}. 
	
	(b) Conversely, if $\omega(x_1,...,x_n)=x_1+...+x_n$ and $\sigma(x_1,...,x_n)=\sqrt[n]{x_1\cdots x_n}$,   $(x_1,...,x_n)\in E,$
	it turns out that the pair $(\omega,\sigma)$ satisfies condition \ref{item:C_0} if and only if $n=2$.
\end{remark}

\subsubsection{Radial weights}

Using Theorems \ref{main-theorem-1} and \ref{main-theorem-2} as building blocks, we obtain further consequences that are suitable for other applications. The first consequence is the following domain additivity property of \eqref{WSI}.

\begin{corollary} \label{cor:domain additivity}
 
Let $M \in \mathbb{N}$ be a positive integer and assume that $E$ is an open set in $\mathbb{R}^n$ of the form $E = (\cup_{i=1}^M E_i) \cup E_0$, where $E_i$ are pairwise disjoint convex cones for $i= 1, \ldots, M$ and $E_0$ is a set of measure zero. Let $\omega, \sigma : \cup_i E_i \to (0, \infty)$ be two homogeneous weights such that their restrictions $(\omega_{|_{E_i}}, \sigma_{|_{E_i}})$ satisfy the conditions of Theorem \ref{main-theorem-1} or Theorem \ref{main-theorem-2} for all $i = 1, \ldots, M$. Then \eqref{WSI} holds on the set $E$. 

\end{corollary}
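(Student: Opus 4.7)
The plan is to apply \eqref{WSI} separately on each convex cone $E_i$, where the hypotheses of Theorem \ref{main-theorem-1} or Theorem \ref{main-theorem-2} are verified by assumption, and then to glue the resulting inequalities together using an elementary $\ell^r$-norm comparison. Since $E_0$ has Lebesgue measure zero, for any $u \in C_0^\infty(\mathbb R^n)$ both integrals in \eqref{WSI} over $E$ decompose as finite sums of integrals over the $E_i$'s, so the problem is truly a combinatorial matter of assembling $M$ known inequalities.

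In detail: for each $i \in \{1,\dots,M\}$ the cited theorems produce a constant $K_i = K_0(n,p,q,\omega|_{E_i},\sigma|_{E_i}) > 0$ such that
\[
\Bigl(\int_{E_i} |u(x)|^q\,\omega(x)\,dx\Bigr)^{1/q} \leq K_i \Bigl(\int_{E_i} |\nabla u(x)|^p\,\sigma(x)\,dx\Bigr)^{1/p}, \qquad u \in C_0^\infty(\mathbb R^n).
\]
Setting $K := \max_{1 \leq i \leq M} K_i$ and raising to the $q$-th power we obtain
\[
\int_{E_i} |u|^q\,\omega\,dx \;\leq\; K^q \Bigl(\int_{E_i} |\nabla u|^p\,\sigma\,dx\Bigr)^{q/p}.
\]
Summing over $i$ and recalling that $q \geq p$ (so that $q/p \geq 1$), I invoke the elementary inequality
\[
\sum_{i=1}^M a_i^{q/p} \;\leq\; \Bigl(\sum_{i=1}^M a_i\Bigr)^{q/p}, \qquad a_i \geq 0,
\]
which is a standard consequence of the monotonicity of $\ell^r$-norms. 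Applying it to $a_i = \int_{E_i} |\nabla u|^p\,\sigma\,dx$ yields
\[
\int_E |u|^q\,\omega\,dx = \sum_i \int_{E_i}|u|^q\,\omega\,dx \leq K^q \Bigl(\sum_i \int_{E_i}|\nabla u|^p\,\sigma\,dx\Bigr)^{q/p} = K^q \Bigl(\int_E |\nabla u|^p\,\sigma\,dx\Bigr)^{q/p},
\]
and extracting the $q$-th root gives \eqref{WSI} on $E$ with constant $K$.

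There is essentially no technical obstacle in this argument; the only point that deserves a brief verification is the step $\sum a_i^{q/p} \le (\sum a_i)^{q/p}$, which relies precisely on the inequality $q \geq p$ guaranteed by \eqref{eq:dimensional balance} and \eqref{eq:p-range}. It is worth emphasizing in the final write-up that the constant $K = \max_i K_i$ produced this way is generally not optimal, so no statement about sharpness or about extremal functions (in the spirit of Theorem \ref{main-theorem-1-equality}) is claimed; the corollary is purely an existence-of-constant result, obtained by superposition of the single-cone inequalities.
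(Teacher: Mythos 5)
Your proof is correct, and while it follows the same overall strategy as the paper (apply Theorem \ref{main-theorem-1} or \ref{main-theorem-2} on each cone $E_i$, decompose the integrals over $E$ using that the $E_i$ are pairwise disjoint and $E_0$ is null, then glue), the aggregation step is genuinely different and in fact yields a better constant. The paper first takes $q$-th roots, uses the subadditivity $\(\sum_i a_i\)^{1/q}\leq \sum_i a_i^{1/q}$ (its ``Minkowski'' step), and then needs H\"older's inequality on the finite sum $\sum_i \(\int_{E_i}|\nabla u|^p\sigma\,dx\)^{1/p}\leq M^{1/p'}\(\int_E|\nabla u|^p\sigma\,dx\)^{1/p}$, ending with $K_0=M^{1/p'}\max_i K_i$. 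You instead stay at the level of $q$-th powers and invoke $\sum_i a_i^{q/p}\leq\(\sum_i a_i\)^{q/p}$, valid because $q\geq p$ (which indeed follows from \eqref{eq:p-range} and \eqref{eq:dimensional balance}); this removes the factor $M^{1/p'}$ and gives the constant $\max_i K_i$, which is sharper (and coincides with the paper's constant only when $p=1$, where $1/p'=0$). All steps check out: the decomposition $\int_E=\sum_i\int_{E_i}$ is legitimate for the nonnegative integrands, and your closing caveat that no optimality of the constant is claimed is appropriate, since the corollary is only an existence-of-constant statement.
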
 

\begin{proof}

Let $u \in C^{\infty}_0(\mathbb{R}^n)$. Applying Theorem \ref{main-theorem-1} or Theorem \ref{main-theorem-2} on the domain $E_i$ we obtain
\begin{equation*}
\(\int_{E_i} |u(x)|^q\,\omega(x) \,dx\)^{1/q}\leq  K_i\,\(\int_{E_i} |\nabla u(x)|^p\,\sigma(x)\,dx\)^{1/p}\ \ \textrm{for all}\ u\in C_0^\infty(\R^n), 
\end{equation*}
for all $i= 1, \ldots M$. 

Since $E_i$ are pairwise disjoint and $E_0$ has measure zero, it follows from Minkowski's inequality that
\begin{align*}
\(\int_{E} |u(x)|^q\,\omega(x) \,dx\)^{1/q}   & =  \(\sum_{i=1}^M\int_{E_i} |u(x)|^q\,\omega(x) \,dx\)^{1/q}  \leq 
\sum_{i=1}^M \(\int_{E_i} |u(x)|^q\,\omega(x) \,dx\)^{1/q} \\  \leq & \sum_{i=1}^M K_i\,\(\int_{E_i} |\nabla u(x)|^p\,\sigma(x)\,dx\)^{1/p}  \leq K_0\,\(\int_{E} |\nabla u(x)|^p\,\sigma(x)\,dx\)^{1/p}, 
\end{align*}
where $K_0 = M^\frac{1}{p'}\max_{i=1,...,M} K_i>0$. 
\end{proof}

With the domain additivity property of \eqref{WSI}, we now consider radial weights and deduce a particular case of the inequality of Caffarelli, Kohn and Nirenberg \cite[Inequality (1.4)]{CKN}, a case also called Hardy-Littlewood-Sobolev's inequality. 
To do this we first prove the following. 

 \begin{corollary}\label{corollary-CKN-1} Let us assume that the parameters $p, q, \alpha, \tau$ satisfy conditions 
 \eqref{eq:p-range}--\eqref{eq:dimensional balance} and $ \frac{\tau}{p'}+\frac{\alpha}{p} >0$. Then there exists $K_0 >0$ such that for all $u \in C^{\infty}_0(\mathbb{R}^n)$ one has 	 	
	\begin{equation} \label{wsi:radial-0}
	\(\int_{\mathbb{R}^n} |u(x)|^q\, |x|^{\tau}  \,dx\)^{1/q}  \leq K_0 \(\int_{\mathbb{R}^n} |\nabla u(x)|^p\,|x|^{\alpha}\,dx\)^{1/p}.
	\end{equation}
	
\end{corollary}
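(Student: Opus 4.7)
The strategy is to decompose $\mathbb{R}^{n}$ (up to a null set) into finitely many narrow convex cones on each of which the radial pair $(\omega,\sigma)=(|x|^{\tau},|x|^{\alpha})$ satisfies one of the structural conditions \ref{item:C_0} or \ref{item:C_1}, and then to invoke the domain additivity provided by Corollary \ref{cor:domain additivity}. The main obstruction is that on a generic convex cone the radial pair does \emph{not} verify \eqref{c-0-condition}: the right-hand side involves the scalar product $(x\cdot y)/|x|^{2}$, whereas the left-hand side depends only on the ratio $|y|/|x|$, so the inequality must fail whenever $y$ is allowed to be nearly orthogonal to $x$. To rule this out, the cones used in the decomposition will be taken narrow enough that any pair $(x,y)$ inside the cone satisfies $x\cdot y\geq c|x||y|$ for a uniform constant $c>0$.

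First I would verify that on any open convex cone $E\subset\mathbb{R}^{n}$ enjoying the uniform narrowness property $x\cdot y\geq c|x||y|$ for all $x,y\in E$, the pair $(|x|^{\tau},|x|^{\alpha})$ satisfies condition \ref{item:C_0} when $n_{a}>n$ and condition \ref{item:C_1} when $n_{a}=n$. Indeed, a direct computation using $\nabla\omega(x)/\omega(x)=\tau\, x/|x|^{2}$, $\nabla\sigma(x)/\sigma(x)=\alpha\, x/|x|^{2}$, together with the simplification produced by \eqref{eq:dimensional balance} (which makes the exponent $(\alpha/p-\tau/q)\cdot n_{a}/(n_{a}-n)$ equal to $1$), reduces the left-hand side of \eqref{c-0-condition} to $|y|/|x|$ and the right-hand side to $C_{0}\gamma\,(x\cdot y)/|x|^{2}$, where $\gamma:=\tau/p'+\alpha/p>0$ by hypothesis. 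The choice $C_{0}=1/(c\gamma)$ then works. When $n_{a}=n$, the homogeneity forces $\omega^{1/q}/\sigma^{1/p}\equiv 1$, and the same arithmetic verifies \ref{item:C_1} with $C_{1}=1$ (the positivity hypothesis in \ref{item:C_1} is a consequence of $\gamma>0$ together with $x\cdot y\geq 0$). The case $p=1$ is handled identically through the degenerate form \eqref{degenerate-C-0}.

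Second, I would construct the decomposition by fixing $\theta>0$ small enough that $\cos(2\theta)>0$ and triangulating the unit sphere $S^{n-1}$ into finitely many closed spherical simplices $\Delta_{1},\dots,\Delta_{M}$ of geodesic diameter at most $\theta$, overlapping only along their boundaries (a set of spherical measure zero). The open cones $E_{i}$ generated by the relative interiors of the $\Delta_{i}$ are pairwise disjoint convex cones whose union exhausts $\mathbb{R}^{n}$ up to a Lebesgue null set, and each $E_{i}$ satisfies $x\cdot y\geq\cos(2\theta)|x||y|$ for $x,y\in E_{i}$ by the angular diameter bound. Applying the first step and then Theorem \ref{main-theorem-1} (respectively Theorem \ref{main-theorem-2} when $p=1$) on each $E_{i}$ yields the weighted Sobolev inequality \eqref{WSI} with an explicit constant on each piece, and Corollary \ref{cor:domain additivity} patches these into the desired \eqref{wsi:radial-0} on all of $\mathbb{R}^{n}$, with a finite (and presumably far from optimal) $K_{0}>0$.
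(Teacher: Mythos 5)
Your proposal is correct and follows essentially the same route as the paper: partition $\mathbb{R}^n$ (up to a null set) into finitely many convex cones narrow enough that $x\cdot y\geq c\,|x|\,|y|$ inside each, observe that for the radial pair $(|x|^{\tau},|x|^{\alpha})$ the balance condition \eqref{eq:dimensional balance} collapses the left side of \ref{item:C_0} to $|y|/|x|$ while the right side is $C_0\bigl(\tfrac{\tau}{p'}+\tfrac{\alpha}{p}\bigr)\tfrac{x\cdot y}{|x|^2}$ (with the analogous check for \ref{item:C_1} when $n_a=n$), and then conclude by Theorems \ref{main-theorem-1}--\ref{main-theorem-2} together with the domain additivity of Corollary \ref{cor:domain additivity}. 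Your write-up merely adds details the paper leaves to ``standard arguments'' (the spherical triangulation and the explicit constant $C_0=1/(c\gamma)$), so there is nothing to correct.
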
 

\begin{proof} 
	By standard arguments we can find $M \in \mathbb{N}$ and pairwise disjoint convex cones $E_i$ 
	$i= 1, \ldots , M$  such that $\mathbb{R}^n = (\cup_{i\in M} E_i)\cup E_0$ where $E_0$ is the union of the boundaries of $E_i$ (and therefore a null measure set). 
	Moreover, we can choose $E_i$ so small that for all $x, y\in E_i$ we have that 
	$x\cdot y  \geq \frac{1}{2} |x| \cdot |y|$. 
	
	Let us assume first that $n_a >n$. Using that $\nabla (|x|^{\alpha}) = \alpha\, x \, |x|^{\alpha-2}$ and 
	$\nabla (|x|^{\tau}) = \tau\, x \, |x|^{\tau-2},$  condition \ref{item:C_0} on $E_i$, $i= 1, \ldots , M,$ can be written as
	\begin{equation} \label{C0-radial}
	\frac{|y|}{|x|} \leq C_0 \, \left(\frac{\tau}{p'} + \frac{\alpha}{p}\right) \, \frac{x\cdot y}{|x|^2}, \ x,y\in E_i.
	\end{equation}
	Using the estimate $x\cdot y  \geq \frac{1}{2} |x| \cdot |y|$, $x,y\in E_i,$ we see that the above relation is satisfied for 
	$x,y \in E_i$ with a properly chosen constant $C_0>0$. The conclusion now follows by Corollary \ref{cor:domain additivity}.
	In the case $n_a =n$ we can argue in a similar way.
\end{proof} 

We notice that the condition $ \frac{\tau}{p'}+\frac{\alpha}{p} >0$ in Corollary \ref{corollary-CKN-1}  is not assumed in \cite{CKN}. However, it turns out that by applying Corollary \ref{corollary-CKN-1} with appropriate values of $\tau,\alpha$ and $q$, we will obtain 
\cite[Inequality (1.4) with $a=1$]{CKN}
for the full range of exponents.
In fact,  with the notation from \cite{CKN}, let $p\geq 1$, $r>0$, $\beta,\gamma\in \mathbb R$ be such that
\begin{equation}\label{CKN-00-1}
\frac{1}{r}+\frac{\gamma}{n}>0,
\end{equation}
\begin{equation}\label{CKN-00-2}
0\leq \beta-\gamma\leq 1
\end{equation}
and 
\begin{equation}\label{CKN-00-3}
\frac{1}{r}+\frac{\gamma}{n}=\frac{1}{p}+\frac{\beta-1}{n}.
\end{equation}
We shall then prove the following desired inequality.

\begin{corollary}\label{corollary-CKN-full} 
	Under assumptions \eqref{CKN-00-1}-\eqref{CKN-00-3}, there exists $K_0=K_0(p,\beta,\gamma)>0$ such that for all $u \in C^{\infty}_0(\mathbb{R}^n)$, one has
	\begin{equation} \label{CKN-full}
	\(\int_{\mathbb{R}^n} |u(x)|^{ r}\, |x|^{\gamma r}  \,dx\)^{1/r}  \leq K_0 \(\int_{\mathbb{R}^n} |\nabla u(x)|^p\,|x|^{\beta p}\,dx\)^{1/p}.
	\end{equation}
	
\end{corollary}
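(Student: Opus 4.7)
The plan is to recast \eqref{CKN-full} as an instance of \eqref{WSI} with radial weights, invoke Corollary \ref{corollary-CKN-1} where possible, and bridge the remaining regime by a shift combined with the classical weighted Hardy inequality. First I would set $\tau := \gamma r$, $\alpha := \beta p$, and $q := r$, so that \eqref{CKN-full} becomes exactly \eqref{WSI} with $\omega(x) = |x|^{\tau}$ and $\sigma(x) = |x|^{\alpha}$. A short bookkeeping, using the identity $\beta - \gamma = 1 - n/n_a$ derived from the dimensional balance, shows that \eqref{CKN-00-1}--\eqref{CKN-00-3} are equivalent to \eqref{eq:p-range}--\eqref{eq:dimensional balance}: \eqref{CKN-00-1} is the middle strict inequality $\alpha + n > p$ of \eqref{eq:p-range}, $\beta - \gamma \le 1$ is the right inequality $\alpha \le \tau + p$, $\beta - \gamma \ge 0$ is \eqref{eq:p-range-bis}, and \eqref{CKN-00-3} is \eqref{eq:dimensional balance}.

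If in addition the auxiliary positivity $\tau/p' + \alpha/p > 0$ of Corollary \ref{corollary-CKN-1} holds, that corollary directly produces \eqref{CKN-full}. Otherwise, I would invoke it for shifted parameters $\tau_s := \tau + sr$ and $\alpha_s := \alpha + sp$, with $s > 0$ to be chosen. A direct check using \eqref{CKN-00-3} shows that these preserve $q = r$ and the full list of conditions \eqref{eq:p-range}--\eqref{eq:dimensional balance}, while the map $s \mapsto \tau_s/p' + \alpha_s/p$ is affine in $s$ with positive slope $(r(p-1)+p)/p$. Hence picking $s$ large makes the shifted inequality accessible via Corollary \ref{corollary-CKN-1}:
\[
\left(\int_{\mathbb R^n}|w|^r|x|^{\tau_s}\,dx\right)^{\!1/r} \le K_s \left(\int_{\mathbb R^n}|\nabla w|^p|x|^{\alpha_s}\,dx\right)^{\!1/p},\quad w \in C_0^\infty(\mathbb R^n).
\]

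I would then apply this inequality to the auxiliary function $w(x) := |x|^{-s} u(x)$. The left-hand side collapses back to $(\int_{\mathbb R^n}|u|^r|x|^{\gamma r}\,dx)^{1/r}$. For the right-hand side, the pointwise bound $|\nabla w| \le s|x|^{-s-1}|u| + |x|^{-s}|\nabla u|$ combined with Minkowski's inequality in $L^p(|x|^{\alpha_s}\,dx)$ gives
\[
\left(\int_{\mathbb R^n}|u|^r|x|^{\gamma r}\,dx\right)^{\!1/r} \le K_s\!\left[s\!\left(\int_{\mathbb R^n}|u|^p|x|^{(\beta-1)p}\,dx\right)^{\!1/p} + \left(\int_{\mathbb R^n}|\nabla u|^p|x|^{\beta p}\,dx\right)^{\!1/p}\right].
\]
The extra $|u|^p$ integral would then be absorbed by the classical weighted Hardy inequality
\[
\left(\int_{\mathbb R^n}|u|^p|x|^{(\beta-1)p}\,dx\right)^{\!1/p} \le \frac{p}{|n-p+\beta p|}\left(\int_{\mathbb R^n}|\nabla u|^p|x|^{\beta p}\,dx\right)^{\!1/p},
\]
valid for $u \in C_0^\infty(\mathbb R^n)$ precisely when $\beta \ne 1 - n/p$, which is guaranteed by the strict inequality in \eqref{CKN-00-1}. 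Combining these estimates produces \eqref{CKN-full} with the explicit constant $K_0 = K_s\bigl(sp/|n-p+\beta p| + 1\bigr)$.

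The main obstacle is the borderline regime in which $\tau/p' + \alpha/p$ fails to be positive: the shift parameter $s$ must be coordinated so that Corollary \ref{corollary-CKN-1} applies to the shifted problem while simultaneously the Hardy inequality remains available for the original $\beta$, and the Hardy constant $p/|n-p+\beta p|$ degenerates exactly at the forbidden value $\beta = 1 - n/p$. A minor technicality is that $w = |x|^{-s} u$ need not be compactly supported away from the origin, but the extension of \eqref{WSI} to $\dot W_\sigma^{1,p}(\mathbb R^n)$ noted after Theorem \ref{main-theorem-2} accommodates this via a routine approximation argument.
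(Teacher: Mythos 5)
Your proposal is correct, but it handles the delicate regime $\frac{\tau}{p'}+\frac{\alpha}{p}\le 0$ by a genuinely different device than the paper. Both arguments reduce \eqref{CKN-full} to Corollary \ref{corollary-CKN-1}: the paper applies that corollary to the modified exponents $\tau=n(d-1)+\gamma r d$ and $\alpha=(n-p)(d-1)+\beta p d$ (which make $\frac{\tau}{p'}+\frac{\alpha}{p}$ positive for large $d$) and then pulls the inequality back through the radial substitution $T(x)=|x|^{d-1}x$ of Lam and Lu \cite{LL}, a transformation that converts both integrals exactly and therefore costs nothing in the constant. You instead shift the weights by $|x|^{sr}$ and $|x|^{sp}$, apply the accessible case to $|x|^{-s}u$, and absorb the commutator term $s|x|^{-s-1}|u|$ by the weighted Hardy inequality; your check that the shift preserves \eqref{eq:p-range}--\eqref{eq:dimensional balance} while making $\frac{\tau_s}{p'}+\frac{\alpha_s}{p}$ eventually positive is sound. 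Two small points. First, Hardy's inequality on $C_0^\infty(\mathbb R^n)$ for functions not vanishing at the origin requires the \emph{strict} inequality $n-p+\beta p=n+(\beta-1)p>0$, not merely $\beta\neq 1-n/p$ (otherwise the left-hand side is infinite); fortunately this strict positivity is exactly what \eqref{CKN-00-1} and \eqref{CKN-00-3} provide, so nothing breaks. Second, rather than invoking the $\dot W^{1,p}_\sigma$ extension for the singular function $|x|^{-s}u$, it is cleaner to apply the shifted inequality to $(\epsilon^2+|x|^2)^{-s/2}u\in C_0^\infty(\mathbb R^n)$ and let $\epsilon\to 0$, using the uniform bound $|\nabla[(\epsilon^2+|x|^2)^{-s/2}u]|\le s|x|^{-s-1}|u|+|x|^{-s}|\nabla u|$ on the right and monotone convergence on the left. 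The trade-off is clear: the Lam--Lu substitution preserves constants and needs only one application of Corollary \ref{corollary-CKN-1}, while your route is more elementary (no change-of-variables lemma) at the price of a triangle inequality and a Hardy constant --- harmless here since only the existence of $K_0$ is claimed.
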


\begin{proof} 
Let $d>1$ be fixed that will be specified later, and let 
	$$\tau:=n(d-1)+\gamma r d,\ \ \alpha:=(n-p)(d-1) +\beta p d\ \ \ {\rm and}\ \ q:=r.$$
	We  claim the parameters $p, q, \alpha, \tau$ satisfy conditions \eqref{eq:p-range}, \eqref{eq:p-range-bis} and \eqref{eq:dimensional balance}.
	 First, a straightforward computation shows that the balance condition \eqref{eq:dimensional balance} is equivalent to condition \eqref{CKN-00-3} which determines the value of $r$ in terms of $p$, $\beta$ and $\gamma$. 
	  
	  Inequality $p<\alpha+n$  in \eqref{eq:p-range} is equivalent to $n+p(\beta-1)>0$ which holds true due to \eqref{CKN-00-1} and \eqref{CKN-00-3}. The second inequality in \eqref{eq:p-range}, i.e., $\alpha\leq\tau+p$, is equivalent to $(\beta-1)p\leq \gamma r$. Adding $n$ to both sides to the last inequality, it follows from \eqref{CKN-00-3} that the resulting inequality is equivalent to $p\leq r$. 
	  Again by  \eqref{CKN-00-3}, $r=\frac{pn}{(\beta-1)p+n-\gamma p}$. Hence $p\leq r$ is equivalent to $\beta-1\leq \gamma$ which holds from  \eqref{CKN-00-2}. Thus \eqref{eq:p-range} holds.  
	  
	  To show \eqref{eq:p-range-bis}, we observe that from  \eqref{CKN-00-3},  
	  $\alpha \geq \(1-\frac{p}{n}\)\tau$ is equivalent to
	  $\frac{\beta}{r}\geq \frac{\gamma}{p}\(1-\frac{p}{n}\)$, which 
	  again by \eqref{CKN-00-3} is equivalent to 
	   $(\beta-\gamma)(\frac{1}{r}+\frac{\gamma}{n})\geq 0$, which in turn holds true from  
	   \eqref{CKN-00-1} and \eqref{CKN-00-2}. 
	
	To apply Corollary \ref{corollary-CKN-1}, it remains to check the inequality $ \frac{\tau}{p'}+\frac{\alpha}{p} >0$, which for the chosen exponents can be written equivalently as $d\(n-1+\beta+\frac{\gamma r}{p'}\)-n+1>0.$ 
From  \eqref{CKN-00-1} and \eqref{CKN-00-3}, it follows that $n-1+\beta+\frac{\gamma r}{p'}=n\(\frac{1}{r}+\frac{\gamma}{n}\)\(1+\frac{r}{p'}\)>0$. So choosing $d>1$ large enough we obtain that $d\(n-1+\beta+\frac{\gamma r}{p'}\)-n+1>0$ as desired. 
	
	Therefore, from Corollary \ref{corollary-CKN-1} there exists $K_0>0$ such that  for every $v \in C^{\infty}_0(\mathbb{R}^n)$ one has
	\begin{equation} \label{wsi:radial-1}
	\(\int_{\mathbb{R}^n} |v(x)|^r\, |x|^{n(d-1)+\gamma r d}  \,dx\)^{1/r}  \leq K_0 \(\int_{\mathbb{R}^n} |\nabla v(x)|^p\,|x|^{(n-p)(d-1) +\beta p d}\,dx\)^{1/p}.
	\end{equation}
	In addition, by an approximation argument, the last inequality is also valid for every $v \in C^1_0(\mathbb{R}^n)$.

	On the other hand, 
for any fixed $d>1$,  if $T:\mathbb R^n\to \mathbb R^n$ is the map defined by $T(x)=|x|^{d-1}x$, then the determinant of its Jacobian is 
	$${\rm det}J_T(x)=d|x|^{n(d-1)},\ x\neq 0,$$
	see Lam and Lu \cite{LL}. For any $u\in C_0^\infty(\mathbb R^n)$ we introduce $Ru(x)=d^{-\frac{1}{p'}}u(T(x))$ (with the usual convention that $\frac{1}{p'}=0$ when $p=1$). 
	Thanks to \cite[Lemma 2.2]{LL}, a change of variable gives that for every $t,\mu\in \mathbb R$ and every continuous function $f:\mathbb R\to \mathbb R$ one has
	\begin{equation}\label{LL-1}
	\int_{\mathbb R^n} \dfrac{f\(d^{-\frac{1}{p'}}u(x)\)}{|x|^t}dx=d\int_{\mathbb R^n} \frac{f\(Ru(x)\)}{|x|^{n+td-nd}}dx
	\end{equation}  
	and
	\begin{equation}\label{LL-2}
	\int_{\mathbb R^n} \frac{|\nabla Ru(x)|^p}{|x|^{d(p+\mu-n)+n-p}}dx\leq \int_{\mathbb R^n} \frac{|\nabla u(x)|^p}{|x|^\mu}dx.
	\end{equation}  
	If we apply \eqref{LL-1} and \eqref{LL-2} with $t:=-\gamma r$, $\mu:=-\beta p$ and $f(s)=|s|^r$, then using  \eqref{wsi:radial-1} with $v:=Ru\in  C^1_0(\mathbb{R}^n)$, we obtain precisely \eqref{CKN-full}.  	
\end{proof} 

\subsubsection{Further examples of weights}
In this section we further illustrate Conditions \ref{item:C_0} and \ref{item:C_1}.
A sufficient condition for \ref{item:C_0} to hold is the following.
  
  \begin{proposition}\label{prop-c-0}
  	Let $E\subseteq \mathbb R^n$ be an open convex cone and let $\omega,\sigma:E\to (0,\infty)$ be differentiable weights satisfying \eqref{eq:homogeneity of weights}-\eqref{eq:dimensional balance}, and $n_a>n$.  If  $$F(x)=\omega(x)^\delta\,\sigma(x)^\gamma$$ is concave in $E$ with $\delta=-\dfrac{1}{q}\dfrac{n_a}{n_a-n}$, $\gamma=\dfrac{1}{p}\dfrac{n_a}{n_a-n}$ and $\nabla \omega(x)\cdot y\geq 0$ for every $x,y\in E$, then the pair $(\omega,\sigma)$ satisfies condition {\rm \ref{item:C_0}} with constant $C_0=\dfrac{n_a}{n_a-n}$.   
  \end{proposition}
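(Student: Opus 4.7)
The plan is to read condition \ref{item:C_0} as the \emph{tangent inequality} for a one-homogeneous concave function. Define $F = \omega^\delta \sigma^\gamma$ as in the statement. Using the homogeneity degrees $\tau$ and $\alpha$ of $\omega$ and $\sigma$, the function $F$ is homogeneous of degree $\delta\tau + \gamma\alpha$, and the crucial algebraic check is that the prescribed exponents $\delta=-\frac{n_a}{q(n_a-n)}$ and $\gamma=\frac{n_a}{p(n_a-n)}$, combined with the dimensional balance \eqref{eq:dimensional balance} (which yields $\frac{\alpha}{p}-\frac{\tau}{q}=\frac{n_a-n}{n_a}$), force $\delta\tau+\gamma\alpha=1$. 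Hence $F$ is $1$-homogeneous and concave, so Lemma \ref{concave-characterization} together with Euler's identity $\nabla F(x)\cdot x = F(x)$ delivers
\[
F(y) \leq \nabla F(x)\cdot y \qquad\text{for a.e.\ } x\in E \text{ and every } y\in E.
\]

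Next I would translate this inequality into the form of \eqref{c-0-condition}. Substituting the chosen values of $\delta,\gamma$ gives on the left
\[
\frac{F(y)}{F(x)} = \left(\frac{\omega(y)}{\omega(x)}\right)^{\delta}\left(\frac{\sigma(y)}{\sigma(x)}\right)^{\gamma} = \left[\left(\frac{\sigma(y)}{\sigma(x)}\right)^{1/p}\left(\frac{\omega(x)}{\omega(y)}\right)^{1/q}\right]^{n_a/(n_a-n)},
\]
which is exactly the LHS of \eqref{c-0-condition}. On the right, the identity $\nabla F / F = \delta\,\nabla\omega/\omega + \gamma\,\nabla\sigma/\sigma$, upon dividing the tangent inequality by $F(x)>0$, yields
\[
\frac{F(y)}{F(x)} \leq \frac{n_a}{n_a-n}\left(-\frac{1}{q}\frac{\nabla\omega(x)}{\omega(x)}+\frac{1}{p}\frac{\nabla\sigma(x)}{\sigma(x)}\right)\cdot y.
\]
To convert the coefficient $-1/q$ into the desired $+1/p'$, I would invoke the sign assumption $\nabla\omega(x)\cdot y\geq 0$: since $\omega>0$ and $\frac{1}{p'}+\frac{1}{q}\geq 0$, we obtain at once $-\frac{1}{q}\frac{\nabla\omega(x)}{\omega(x)}\cdot y\leq \frac{1}{p'}\frac{\nabla\omega(x)}{\omega(x)}\cdot y$, upgrading the previous estimate to condition \ref{item:C_0} with $C_0=\frac{n_a}{n_a-n}$.

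The substantive step is really the homogeneity check $\delta\tau+\gamma\alpha=1$, which identifies $(\delta,\gamma)$ as precisely the exponents making the tangent inequality of $F$ coincide with \eqref{c-0-condition}; once this is observed, the remaining manipulations are routine bookkeeping. The only place where the extra hypothesis $\nabla\omega(x)\cdot y \geq 0$ actually bites is in the final passage from $-1/q$ to $+1/p'$ on the $\nabla\omega/\omega$ term, which would otherwise be obstructed since $\frac{1}{p'}\neq -\frac{1}{q}$ in general; this explains why the sign hypothesis on $\nabla\omega$ is stated separately from the concavity of $F$.
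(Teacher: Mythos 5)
Your proof is correct and follows essentially the same route as the paper's: both exploit that the balance condition \eqref{eq:dimensional balance} makes $F$ one-homogeneous (so concavity plus Euler's identity gives $F(y)\leq \nabla F(x)\cdot y$), identify $F(y)/F(x)$ with the left-hand side of \eqref{c-0-condition}, and then use $\nabla\omega(x)\cdot y\geq 0$ to upgrade the coefficient $-\tfrac{1}{q}$ to $\tfrac{1}{p'}$ on the $\nabla\omega/\omega$ term. The only cosmetic difference is that the paper records the intermediate consequence $\nabla\sigma(x)\cdot y\geq 0$, which your streamlined final step shows is not actually needed.
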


		
	\begin{proof}	
	From the form of $F$, the pair $(\omega,\sigma)$ satisfies \ref{item:C_0} if and only if
		$$ \frac{F(y)}{F(x)} \leq C_0 \left (\dfrac{1}{p'} \,\frac{ \nabla \omega(x)}{\omega(x)} +\dfrac{1}{p}\, \frac{\nabla \sigma (x)}{\sigma (x) } \right)\cdot y, \ \ \forall x,y\in E.$$
		To prove the last inequality, we see that by  \eqref{eq:homogeneity of weights}, 
	$F$ is homogenous of degree $\delta\tau+\gamma\alpha$. Hence $\nabla F(x)\cdot x=\(\delta\tau+\gamma\alpha\)\,F(x)$ for every $x\in E.$
	By the concavity of $F$ in $E$, we have that
	\begin{equation}\label{eq:convexity of F}
	F(y)-F(x)\leq \nabla F(x)\cdot (y-x)\ \  \ \textrm{for all}\ x,y\in E.
	\end{equation}
	Since from the balance condition \eqref{eq:dimensional balance} $\delta\tau+\gamma\alpha-1=0$, it follows from \eqref{eq:convexity of F} that
	\begin{equation}\label{eq:consequence of concavity of F}
	\dfrac{F(y)}{F(x)}\leq \(\delta\dfrac{\nabla \omega(x)}{\omega(x)}
	+\gamma\dfrac{\nabla \sigma(x)}{\sigma(x)}\)\cdot y\ \ \ \textrm{for all}\ x,y\in E.
	\end{equation}
	On the other hand, by assumption $\nabla \omega(x)\cdot y\geq 0$ and $\delta<0$, so we get  
	\[
	\dfrac{F(y)}{F(x)}\leq 
	\(\dfrac{1}{p}\dfrac{n_a}{n_a-n}\dfrac{\nabla \sigma(x)}{\sigma(x)}\)\cdot y\ \ \ \textrm{for all}\ x,y\in E.
	\]
	Thus, $\nabla \sigma(x)\cdot y\geq 0$ for every $x,y\in E$. 
	Using again that $\nabla \omega(x)\cdot y\geq 0$ for every $x,y\in E$, we obtain from \eqref{eq:consequence of concavity of F} that \ref{item:C_0} holds with  $C_0=\dfrac{n_a}{n_a-n}.$
	\end{proof}

To illustrate Proposition \ref{prop-c-0} we show the following example. 
Let $E=(0,\infty)^n$ with $n\geq 2$, $0<\alpha<p$ and $1\leq  p<\alpha+n$.
If $\omega\equiv 1$, $\sigma(x)=\(\dfrac{x_1\cdots x_n}{x_1+...+x_n}\)^{\alpha/(n-1)}$, then $n_a=\dfrac{pn}{p-\alpha}$,
$$F(x)=\omega(x)^\delta\,\sigma(x)^\gamma=\(\frac{x_1\cdots x_n}{x_1+...+x_n}\)^\frac{1}{n-1}$$ 
in Proposition \ref{prop-c-0}, which is concave in $E$, see Marcus and Lopes \cite{MM}. 
Therefore, the pair $(\omega,\sigma)$ satisfies \ref{item:C_0} with $C_0=p/\alpha$ and from Theorem \ref{main-theorem-1}, \eqref{WSI} holds for these weights with $q=\dfrac{p\,n}{\alpha+n-p}$.

We conclude this part by giving an example of weights for which condition \ref{item:C_1} holds.
Let $E=(0,\infty)^n$, $n\geq 2$,  $\tau\geq 0$, and $1\leq p<n$.
If $\omega(x)=(x_1+...+x_n)^\tau$ and $\sigma(x)=|x|^{\tau(1-p/n)}$, then $n_a=n$ and $q=\frac{np}{n-p}$. 
Since $$\sup_{x\in E}\dfrac{\omega(x)^{1/q}}{\sigma(x)^{1/p}}=n^\frac{\tau}{2q}\in (0,\infty),$$
condition \ref{item:C_1} holds, and from Theorem \ref{main-theorem-1}/(ii)  we get that \eqref{WSI} holds for these weights. 
In particular, if $\tau=0,$ then \eqref{WSI} reduces to the sharp Sobolev inequality of Talenti \cite{Talenti} on the cone $E.$

\newpage

 \subsection{Weighted PDEs}\label{eigenvalue-1}

\subsubsection{Spectral gap} In this subsection we provide an estimate of the spectral gap for a weighted eigenvalue problem. More precisely, we have the following.
 
\begin{proposition}\label{spectral-gap}
	Let  $E\subseteq \mathbb R^n$ be an open convex cone and let $\Omega\subset \mathbb R^n$ be an open bounded set such that $\Omega\cap E\neq \emptyset$. Let  $\omega,\sigma:\overline E\to [0,\infty)$ be two continuous nonzero weights  which are differentiable in $E$, satisfying \eqref{eq:homogeneity of weights} with $\alpha=\tau+2$, condition {\rm \ref{item:C_0}} for some $C_0>0$, and $\sigma|_{\partial E}=0$. Then any eigenvalue $\lambda>0$ of the problem 
	\[ \   \left\{ \begin{array}{lll}
	-{\rm div}(\sigma \nabla u)=\lambda \omega u& {\rm in} &   \Omega\cap E, \\
	u=0  & \text{on}& \partial\Omega\cap \overline E. 
	\end{array}\right. \eqno{({P})}\]
verifies
	$$\lambda\geq \frac{1}{4C_0^2}\sup_{v\in C_0^\infty(\Omega)\setminus \{0\},v\geq 0}\dfrac{\(\int_E v(y)\omega\(y\)^{-\frac{1}{2}}\,
		\sigma\(y\)^{\frac{1}{2}}\,dy\)^2}{\int_E v(y)\,|y|^2dy \int_E v(y)dy}>0.$$ 
\end{proposition}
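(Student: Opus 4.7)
The plan is to specialize Theorem~\ref{main-theorem-1}/(i) to the degenerate regime $n_a=+\infty$ and combine the resulting inequality with the Rayleigh identity coming from $(P)$. Since $\omega$ is $\tau$-homogeneous and $\sigma$ is $\alpha$-homogeneous with $\alpha=\tau+2$, the choice $p=2$ forces $\alpha=p+\tau$, which by Remark~\ref{remark-1}/(ii) corresponds exactly to $n_a=+\infty$ and hence $q=p=2$. All constraints \eqref{eq:p-range}--\eqref{eq:dimensional balance} are then automatically satisfied (the local integrability of $\omega$ gives $\tau+n>0$), so Theorem~\ref{main-theorem-1}/(i) is applicable with constant $K_0$.

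With $p=p'=q=2$ and $1/n_a=0$, the explicit formula for the Sobolev constant collapses to
\[
K_0 \;=\; 2\,C_0\,\inf_{\substack{v\in C_0^\infty(\R^n),\,v\geq 0\\ \int_E v\,dy=1}}
\frac{\bigl(\int_E v(y)\,|y|^2\,dy\bigr)^{1/2}}{\int_E v(y)\,\omega(y)^{-1/2}\,\sigma(y)^{1/2}\,dy}.
\]
Homogenizing any nonzero $v\geq 0$ in $C_0^\infty(\R^n)$ by its total mass $\int_E v\,dy$, this is equivalent to the pointwise bound
\[
K_0^{\,2}\;\leq\;4\,C_0^{\,2}\,
\frac{\int_E v(y)\,|y|^2\,dy\;\int_E v(y)\,dy}{\bigl(\int_E v(y)\,\omega(y)^{-1/2}\sigma(y)^{1/2}\,dy\bigr)^{2}},
\]
which will hold in particular for every $v\in C_0^\infty(\Omega)\setminus\{0\}$ with $v\geq 0$.

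Now let $u$ be an eigenfunction corresponding to $\lambda$. Multiplying $(P)$ by $u$ and integrating by parts on $\Omega\cap E$, the boundary contribution on $\partial\Omega\cap\overline E$ vanishes because $u=0$ there, while the contribution on $\overline\Omega\cap\partial E$ vanishes thanks to the hypothesis $\sigma|_{\partial E}=0$; this yields the Rayleigh identity $\lambda\int_E\omega u^2\,dx=\int_E\sigma|\nabla u|^2\,dx$. Extending $u$ by zero outside $\Omega$ produces a function compactly supported in $\overline\Omega\cap\overline E$, to which \eqref{WSI} applies via the extension to $\dot W_\sigma^{1,2}(\R^n)$ recalled at the beginning of Section~\ref{sec:discussion of conditions}. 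Therefore $\int_E\omega u^2\,dx\leq K_0^{\,2}\int_E\sigma|\nabla u|^2\,dx$, and combining with the Rayleigh identity gives $\lambda\geq 1/K_0^{\,2}$. Substituting the bound on $K_0^{\,2}$ and taking the supremum over $v\in C_0^\infty(\Omega)\setminus\{0\}$, $v\geq 0$, yields the inequality in the statement. Positivity of the supremum follows by choosing any such $v$ with $v>0$ on some ball inside $\Omega\cap E$: then $\int_E v\,\omega^{-1/2}\sigma^{1/2}\,dy>0$ since both weights are strictly positive in $E$, while the other integrals are finite.

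The only delicate point is the legitimacy of applying \eqref{WSI} to an eigenfunction that may not vanish on $\partial E$; both the clean integration by parts and the density argument extending \eqref{WSI} from $C_0^\infty(\R^n)$ to functions merely supported in $\overline\Omega\cap\overline E$ rely crucially on the hypothesis $\sigma|_{\partial E}=0$, so no further obstacle arises.
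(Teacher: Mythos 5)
Your proposal is correct and follows essentially the same route as the paper: verify that $p=q=2$, $\alpha=\tau+2$ give $n_a=+\infty$ and satisfy \eqref{eq:p-range}--\eqref{eq:dimensional balance}, derive the Rayleigh identity by integration by parts with the boundary term killed by $u=0$ on $\partial\Omega\cap\overline E$ and $\sigma|_{\partial E}=0$, and then bound $\lambda\geq K_0^{-2}$ using Theorem \ref{main-theorem-1}/(i), whose constant collapses to $2C_0$ times the normalized infimum. Your extra care in spelling out the de-normalization of $v$ and the extension of \eqref{WSI} to $\dot W_\sigma^{1,2}(\mathbb R^n)$ for the eigenfunction only makes explicit what the paper leaves as ``a simple computation.''
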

 
 	\begin{proof}
 	Let us multiply the first equation in $(P)$ by $u\neq 0$; an integration and the divergence theorem gives that 
\begin{equation}\label{eigenvalue-0}
 -\int_{\partial(\Omega\cap E)}\sigma(x) \dfrac{\partial u}{\partial \textbf{n}}(x)u(x)ds(x)+	\int_{\Omega}|\nabla u(x)|^2\sigma(x)dx =\lambda \int_{\Omega}u(x)^2\omega(x)dx.
	\end{equation}
 	Since $\partial(\Omega\cap E)\subseteq \partial\Omega \cup \partial E$, the first integral in the left hand side vanishes either for $\sigma|_{\partial E}=0$ or for the Dirichlet boundary condition $u=0$ on $\partial\Omega\cap \overline E$. Therefore, equation \eqref{eigenvalue-0} reduces to 
 		\begin{equation}\label{eigenvalue-2}
	\int_{\Omega}|\nabla u(x)|^2\sigma(x)d =\lambda \int_{\Omega}u(x)^2\omega(x)dx.
 	\end{equation}
 	 Since  $\tau+n>0$ (by the locally integrability of $\omega$) and $\alpha=\tau+2$, assumptions (\ref{eq:p-range})-(\ref{eq:dimensional balance}) are immediately verified with the choices $p=q=2.$ In particular, $n_a=+\infty$ and we can apply Theorem \ref{main-theorem-1}/(i), obtaining 
 	 $$\lambda=\dfrac{\displaystyle\int_{\Omega}|\nabla u(x)|^2\sigma(x)dx}{\displaystyle\int_{\Omega}u(x)^2\omega(x)dx}\geq K_0^{-2},$$
 	 where the constant $K_0>0$ appears in the statement of  Theorem \ref{main-theorem-1}/(i). The rest is a simple computation. 
\end{proof}

\begin{remark}\rm 
	Due to \eqref{eigenvalue-0}, a similar spectral gap estimate can be obtained in the same way also for the Neumann boundary value condition. Moreover, the case $p\neq 2$ can be also handled using the operator ${\rm div}(\sigma |\nabla u|^{p-2}\nabla u)$ in problem ({\it P}). 
\end{remark}

\subsubsection{A variational problem}

Applying a variational method we prove the following result.

\begin{proposition}\label{alkalmazas}  

Let  $E\subseteq \mathbb R^n$ be an open convex cone and let $\Omega\subset \mathbb R^N$ be an open bounded set such that $\Omega\cap E\neq \emptyset$. 
Let $\omega,\sigma:\overline E\to [0,\infty)$ be two nonzero weights continuous in $\overline E$, differentiable a.e. in $E$, and satisfying \eqref{eq:homogeneity of weights}-\eqref{eq:dimensional balance} with $\alpha<\tau+2$,  condition {\rm \ref{item:C_0}}, and $\sigma|_{\partial E}=0$. Then for every $r\in (2,q)$ the problem 
	\[ \   \left\{ \begin{array}{lll}
	 -{\rm div}(\sigma \nabla u)+\sigma u= \omega u^{r-1}& {\rm in} &   \Omega\cap E, \\
	u\geq 0& {\rm in} &   \Omega\cap E,\\
	u= 0& {\rm on} &   \partial\Omega\cap \overline E,\\
	\end{array}\right. \eqno{({\mathcal P})}\]
has a nonzero weak solution in the weighted Sobolev space $W_\sigma^{1,2}(\Omega)$. 	
\end{proposition}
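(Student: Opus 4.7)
The plan is to obtain a nonzero weak solution of $(\mathcal P)$ via the mountain pass theorem applied to an energy functional on the weighted Sobolev space $W^{1,2}_\sigma(\Omega)$, which is defined as the completion of $C^\infty_0(\Omega)$ with respect to the norm $\|u\|^2=\int_\Omega(|\nabla u|^2+u^2)\sigma\,dx$. Weak solutions of $(\mathcal P)$ are precisely the critical points of
\[
J(u)=\frac{1}{2}\int_\Omega(|\nabla u|^2+u^2)\sigma\,dx-\frac{1}{r}\int_\Omega (u^+)^r\omega\,dx,\qquad u\in W^{1,2}_\sigma(\Omega),
\]
where $u^+=\max\{u,0\}$. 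Since $\alpha<\tau+2$ implies $n_a<+\infty$ and $q>p=2$, Theorem \ref{main-theorem-1}/(i) provides the continuous embedding $W^{1,2}_\sigma(\Omega)\hookrightarrow L^q_\omega(\Omega\cap E)$ with constant $K_0$. In particular, by interpolation with the trivial embedding into $L^2_\omega$ (which follows from $\omega/\sigma$ being bounded on $\overline{\Omega\cap E}$, or by a similar WSI argument), we obtain $W^{1,2}_\sigma(\Omega)\hookrightarrow L^r_\omega(\Omega\cap E)$ continuously for every $r\in [2,q]$.

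The mountain pass geometry of $J$ is then verified as follows. First, $J(0)=0$, and from $\int_\Omega (u^+)^r\omega\,dx\le C\|u\|^r$ together with $r>2$, there exist $\rho,\alpha_0>0$ such that $J(u)\ge\alpha_0$ whenever $\|u\|=\rho$. Second, choosing any nonzero, nonnegative $u_0\in C_0^\infty(\Omega\cap E)$, one has $J(tu_0)=\tfrac{t^2}{2}\|u_0\|^2-\tfrac{t^r}{r}\int_\Omega u_0^r\omega\,dx\to-\infty$ as $t\to\infty$, since $r>2$. Hence the mountain pass minimax level
\[
c=\inf_{\gamma\in\Gamma}\max_{t\in[0,1]}J(\gamma(t)),\qquad \Gamma=\{\gamma\in C([0,1],W^{1,2}_\sigma(\Omega)):\gamma(0)=0,\ J(\gamma(1))<0\},
\]
satisfies $c\ge\alpha_0>0$.

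To conclude via the mountain pass theorem we need the Palais--Smale condition. Coercivity on PS sequences is standard: if $(u_k)$ is a PS sequence at level $c$, then combining $J(u_k)\to c$ with $\langle J'(u_k),u_k\rangle\to 0$ using the Ambrosetti--Rabinowitz-type inequality with constant $r>2$ yields $\|u_k\|$ bounded. Passing to a weakly convergent subsequence $u_k\rightharpoonup u$ in $W^{1,2}_\sigma(\Omega)$, one then needs strong convergence in $L^r_\omega$ to pass to the limit in the nonlinear term. Once the PS condition is established, the mountain pass theorem produces a critical point $u$ with $J(u)=c>0$, hence $u\not\equiv 0$. Finally, testing $J'(u)=0$ against $u^-$ gives $\int_\Omega(|\nabla u^-|^2+(u^-)^2)\sigma\,dx=0$, so $u^-\equiv 0$ and $u\ge 0$.

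The main obstacle will be the compactness step, that is, showing that the embedding $W^{1,2}_\sigma(\Omega)\hookrightarrow L^r_\omega(\Omega\cap E)$ is compact for every $r\in(2,q)$. The natural route is to combine the continuous critical embedding into $L^q_\omega$ (from Theorem \ref{main-theorem-1}/(i)) with an interpolation inequality and a local compact embedding into $L^r_{\mathrm{loc}}$ away from the degeneracy set $\{\sigma=0\}\cup\partial E$, where a classical Rellich--Kondrachov argument applies on subdomains compactly contained in $\Omega\cap E$; the contribution near $\partial E$ (where $\sigma$ vanishes) is then controlled by the homogeneity of $\sigma$ and the assumption $\sigma|_{\partial E}=0$, using a tubular-neighborhood estimate together with the subcritical gap $r<q$. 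Modulo this compactness argument, all other verifications are routine.
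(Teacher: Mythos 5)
Your proposal follows essentially the same route as the paper: a mountain-pass argument for the functional with the positive part $(u^+)^r$ in the nonlinearity, boundedness of Palais--Smale sequences via the Ambrosetti--Rabinowitz inequality with $r>2$, passage to the limit in the nonlinear term via compactness of the embedding into the subcritical $L^r_\omega$, and the $u^-$ test to get nonnegativity, so a solution of $(\mathcal P_+)$ solves $(\mathcal P)$. Two remarks. First, your parenthetical justification of the embedding into $L^2_\omega$ via ``$\omega/\sigma$ bounded on $\overline{\Omega\cap E}$'' is not valid in general, since $\sigma$ vanishes on $\partial E$ while $\omega$ need not; but this step is unnecessary: for any $r\in(2,q]$, H\"older's inequality gives
\[
\int_{\Omega\cap E}|u|^r\omega\,dx\leq\Bigl(\int_{\Omega\cap E}|u|^q\omega\,dx\Bigr)^{r/q}\Bigl(\int_{\Omega\cap E}\omega\,dx\Bigr)^{1-r/q},
\]
which together with the critical embedding from Theorem \ref{main-theorem-1} and the local integrability of $\omega$ on the bounded set $\Omega$ yields the continuous embedding directly. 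Second, the compactness of $W^{1,2}_\sigma(\Omega)\hookrightarrow L^r_\omega(\Omega\cap E)$, which you flag as the main obstacle and only sketch, is treated with the same brevity in the paper (it is asserted to follow from the boundedness of $\Omega$); your sketched route --- local Rellich--Kondrachov away from $\{\sigma=0\}\cup\partial E$ combined with the H\"older estimate above to make the contribution of a thin neighborhood of the degeneracy set uniformly small thanks to $r<q$ --- is the standard way to make that assertion rigorous, so no divergence of approach and no step that would fail.
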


\begin{proof}
	 We first recall that the weighted Sobolev space $W_\sigma^{1,2}(\Omega)$ is the set of all measurable functions such that $u\in L^2_\sigma(\Omega\cap E)$ and $|\nabla u|\in L^2_\sigma(\Omega\cap E)$ with the norm
	 $$\|u\|_{W_\sigma^{1,2}(\Omega)}=\(\int_{\Omega\cap E}|\nabla u(x)|^2\sigma(x) dx+\int_{\Omega\cap E} u(x)^2\sigma(x)dx\)^{1/2}.$$ By our assumptions,  Theorem \ref{main-theorem-1} implies that the space $W_\sigma^{1,2}(\Omega)$  is continuously embedded into $L^q_\omega(\Omega\cap E)$, where $q=\dfrac{2(\tau+n)}{\alpha+n-2}$ is the critical exponent. 
	 We also notice that 	 
	 $2<q$ since $\alpha<\tau+2$. Thus,  it follows from the boundedness of $\Omega$ that $W_\sigma^{1,2}(\Omega)$  is compactly embedded into $L^r_\omega(\Omega\cap E)$ for any $r\in (2,q)$.

	
Fix $r\in (2,q)$. Instead of   $({\mathcal P})$ we consider first the problem
%
	\[ \   \left\{ \begin{array}{lll}
-{\rm div}(\sigma \nabla u)+\sigma u= \omega u_+^{r-1}& {\rm in} &   \Omega\cap E, \\
u= 0& {\rm on} &   \partial\Omega\cap \overline E,\\
\end{array}\right. \eqno{({\mathcal P_+})}\]
where we used the notation $u_+ = \max \{ u, 0\}$. 

The energy functional $\mathcal E:W_\sigma^{1,2}(\Omega)\to \mathbb R$ associated with  problem $({\mathcal P_+})$ is  defined by
	$$\mathcal E(u)=\frac{1}{2}\|u\|^2_{W_\sigma^{1,2}(\Omega)}-\frac{1}{r}\int_{\Omega\cap E}(u(x))_+^r\omega(x)dx.$$
	Standard arguments imply that $\mathcal E$ is well-defined (since $W_\sigma^{1,2}(\Omega)$  is continuously embedded into $L^r_\omega(\Omega\cap E)$)  and $\mathcal E\in C^1(W_\sigma^{1,2}(\Omega); \mathbb R)$; moreover,  its differential  is given by 
	$$\mathcal E'(u)(v)=\int_{ \Omega\cap E}(\nabla u(x)\cdot \nabla v(x)+u(x)v(x))\sigma(x)dx-\int_{\Omega\cap E}(u(x))_+^{r-1}\omega(x)v(x)dx, $$ 
	 for all $u,v\in W_\sigma^{1,2}(\Omega).$ In fact, using the divergence theorem together with the Dirichlet boundary condition $u=0$ on $\partial\Omega\cap \overline E$ and $\sigma|_{\partial E}=0$, it follows that 
	 $$\mathcal E'(u)(v)=\int_{ \Omega\cap E}(-{\rm div}(\sigma(x) \nabla u(x))+\sigma(x)u(x))v(x)dx-\int_{\Omega\cap E}\omega(x)(u(x))_+^{r-1}v(x)dx.$$ 
	 In particular, $u\in W_\sigma^{1,2}(\Omega)$ is a critical point of $\mathcal E$ if and only if $u$ is a weak solution of problem $({\mathcal P_+})$.

	We are going to prove that $\mathcal E$ satisfies the Palais-Smale condition on $W_\sigma^{1,2}(\Omega)$. In order to complete this, we consider a sequence  $\{u_k\}_k\subset W_\sigma^{1,2}(\Omega)$ such that  $\mathcal E'(u_k)\to 0$ as $k\to \infty$ and $|\mathcal E(u_k)|\leq C$ ($k\in \mathbb N$) for some $C>0,$ and our aim is to prove that there exists a subsequence of $\{u_k\}_k$ which converges strongly in $W_\sigma^{1,2}(\Omega)$ to some element $u\in W_\sigma^{1,2}(\Omega)$. 
	We notice that 
	$$r\mathcal E(u_k)- \mathcal E'(u_k)(u_k)= \left(\frac{r}{2}-1\right)\|u_k\|^2_{W_\sigma^{1,2}(\Omega)} ,\ \ k\in \mathbb N. $$
Since $\mathcal E'(u_k) \to 0$, we have $|\mathcal E'(u_k)(u_k)| \leq 1$ for large enough values of $k$. 
Therefore, for large $k\geq 1$ one has that $|r\mathcal E(u_k)- \mathcal E'(u_k)(u_k)|\leq rC+\|u_k\|_{W_\sigma^{1,2}(\Omega)}$. Because $r>2$, the latter relation implies that 
	 $\{u_k\}_k$ is bounded in $W_\sigma^{1,2}(\Omega)$. In particular,  we may extract a  subsequence of $\{u_k\}_k$ (denoted in the same way) which  converges weakly to an element $u\in W_\sigma^{1,2}(\Omega)$, and strongly to $u$ in $L^r_\omega(\Omega\cap E)$. The latter follows from the fact that $W_\sigma^{1,2}(\Omega)$  is compactly  embedded into $L^r_\omega(\Omega\cap E)$.  A simple computation shows that 
	$$\|u_k-u\|^2_{W_\sigma^{1,2}(\Omega)}=\mathcal E'(u_k)(u_k-u)-\mathcal E'(u)(u_k-u)+\int_{\Omega\cap E}((u_k)_+^{r-1}-u_+^{r-1})(u_k-u)\omega dx,\ \ k\in \mathbb N.$$
    Since $\mathcal E'(u_k)\to 0$ as $k\to \infty$ and $\{u_k\}_k$ is bounded in $W_\sigma^{1,2}(\Omega)$, one has that $\mathcal E'(u_k)(u_k-u)\to 0$ as $k\to \infty$. Since $\{u_k\}_k$ converges weakly to  $u$, one has that $\mathcal E'(u)(u_k-u)\to 0$ as $k\to \infty$. Moreover, since $\{u_k\}_k$ converges  strongly to $u$ in $L^r_\omega(\Omega\cap E)$,  H\"older's inequality implies that 
	$$\left|\displaystyle\int_{\Omega\cap E}((u_k)_+^{r-1}-u_+^{r-1})(u_k-u)\omega dx\right|\leq \(\|u_k\|^{r-1}_{L^r_\omega(\Omega\cap E)}+\|u\|^{r-1}_{L^r_\omega(\Omega\cap E)}\) \|u_k-u\|_{L^r_\omega(\Omega\cap E)} \to 0$$ as $k\to \infty$. Summing up, it follows that $\|u_k-u\|^2_{W_\sigma^{1,2}(\Omega)}\to 0$ as $k\to \infty,$ which means that  $\{u_k\}_k$ strongly converges to $u$ in $W_\sigma^{1,2}(\Omega)$.

	We shall prove that  $\mathcal E$ satisfies the mountain pass geometry, i.e., there exist $w_0\in W_\sigma^{1,2}(\Omega)$ and $\rho>0$ such that $\|w_0\|_{W_\sigma^{1,2}(\Omega)}>\rho$ and 
	\begin{equation}\label{MPG-1}
		\inf_{\|u\|_{W_\sigma^{1,2}(\Omega)}=\rho}\mathcal E(u)>\mathcal E(0)\geq \mathcal E(w_0).
	\end{equation}
	 To see this, let $c_{\omega,\sigma}>0$ be the constant in the Sobolev embedding  
	$W_\sigma^{1,2}(\Omega)$  into $L^r_\omega(\Omega\cap E)$, i.e., $\|u\|_{L^r_\omega(\Omega\cap E)}\leq c_{\omega,\sigma} \|u\|_{W_\sigma^{1,2}(\Omega)}$ for every $u\in W_\sigma^{1,2}(\Omega).$ Therefore, since $\|u_+\|_{L^r_\omega(\Omega\cap E)}\leq \|u\|_{L^r_\omega(\Omega\cap E)}$, it follows that
	\begin{eqnarray}\label{MPG-2}
		\mathcal E(u)&=&\frac{1}{2}\|u\|^2_{W_\sigma^{1,2}(\Omega)}-\frac{1}{r}\int_{\Omega\cap E}(u(x))_+^r\omega(x)dx=\frac{1}{2}\|u\|^2_{W_\sigma^{1,2}(\Omega)}-\frac{1}{r}\|u_+\|_{L^r_\omega(\Omega\cap E)}^r\nonumber \\
		&\geq &\frac{1}{2}\|u\|^2_{W_\sigma^{1,2}(\Omega)}-\frac{1}{r}\|u\|_{L^r_\omega(\Omega\cap E)}^r\nonumber\\
		&\geq & \frac{1}{2}\|u\|^2_{W_\sigma^{1,2}(\Omega)}-\frac{1}{r}c_{\omega,\sigma}^r \|u\|_{W_\sigma^{1,2}(\Omega)}^r=\(\frac{1}{2}-\frac{1}{r}c_{\omega,\sigma}^r \|u\|_{W_\sigma^{1,2}(\Omega)}^{r-2}\) \|u\|^2_{W_\sigma^{1,2}(\Omega)}.
	\end{eqnarray}
Since $r>2$, the number 
$\rho:=\(\frac{r}{4c_{\omega,\sigma}^r}\)^\frac{1}{r-2}$ is well defined and $\rho>0$. 
Thus, for any $u\in W_\sigma^{1,2}(\Omega)$ with $\|u\|_{W_\sigma^{1,2}(\Omega)}=\rho$, the  estimate \eqref{MPG-2} gives that 
$$\mathcal E(u)\geq \(\frac{1}{2}-\frac{1}{r}c_{\omega,\sigma}^r \rho^{r-2}\) \rho^2=\frac{\rho^2}{4}.$$
Therefore, since $\mathcal E(0)=0$, the left hand side of \eqref{MPG-1} immediately holds. 

On the other hand, let $w\in W_\sigma^{1,2}(\Omega)$ be any nonnegative, nonzero function. 
	Since $r>2$, we may fix $t_0>0$ large enough such that $$t_0>\max\left\{\dfrac{\rho}{\|w\|_{W_\sigma^{1,2}(\Omega)}},\(\dfrac{r\|w\|^2_{W_\sigma^{1,2}(\Omega)}}{2\|w\|_{L^r_\omega(\Omega\cap E)}^r}\)^\frac{1}{r-2}\right\}.$$
Accordingly, the function  $w_0:=t_0w\in  W_\sigma^{1,2}(\Omega)$ verifies  $\|w_0\|_{W_\sigma^{1,2}(\Omega)}>\rho$ and 
	$$\mathcal E(w_0)=\mathcal E(t_0w)=\frac{t_0^2}{2}\|w\|^2_{W_\sigma^{1,2}(\Omega)}-\frac{t_0^r}{r}\|w\|_{L^r_\omega(\Omega\cap E)}^r<0,$$
	which is the right hand side of \eqref{MPG-1}.

	We are now in a position to apply the Mountain Pass Theorem, see e.g. Rabinowitz \cite{Rabinowitz},  which implies the existence of a critical point $u\in W_\sigma^{1,2}(\Omega)$ of $\mathcal E$ with the property that $\mathcal E(u)>0$ (thus $u\neq 0$), which is a weak solution to the problem $({\mathcal P_+})$.
	
It remains to prove that $u$ is nonnegative and weakly solves the original problem $({\mathcal P})$.  By multiplying the first equation of $({\mathcal P_+})$   by $u_-=\min(u,0)$, an integration on $\Omega\cap E$ implies that 
	$\|u_-\|_{W_\sigma^{1,2}(\Omega)}=0$, i.e. $u_-=0$. Accordingly, $u\geq 0$   is a nonzero weak solution to the original problem $({\mathcal P})$ as well, which completes the proof.
\end{proof}

{\section{Final comments and open questions}\label{sect-open-question} 

\subsection{Necessity of conditions \eqref{eq:p-range}, \eqref{eq:p-range-bis} and \eqref{eq:dimensional balance}}\label{sec:necessity of the conditions for wsi} 
 
We start this section showing that by choosing appropriate test functions in  \eqref{WSI},  conditions \eqref{eq:p-range}, \eqref{eq:p-range-bis}, and \eqref{eq:dimensional balance} on the parameters 
are \textit{necessary} for the validity of \eqref{WSI}.
  
Condition \eqref{eq:dimensional balance} follows by scaling: if $u$ verifies \eqref{WSI}, then $u_\lambda(x)=u(\lambda x)$ also satisfies \eqref{WSI} for each $\lambda>0$. 
Also, since $q>0$, the left hand side inequality in \eqref{eq:p-range} follows immediately from \eqref{eq:dimensional balance} because $\tau+n>0$ from the local integrability of $\omega$.   

Let us next prove the right hand inequality in \eqref{eq:p-range}.
Let $\varphi$ be a smooth function defined for $t\geq 0$ satisfying $\p(t)=0$ for $0\leq t<1$, $\p(t)=1$ for $t\geq 2$, and $0\leq \p(t)\leq 1$ for all $t>0$. Also choose $h(t)$ smooth for $t\in \R$ with $h(t)=1$ for $|t|\leq 1$, $h(t)=0$ for $|t|\geq 2$ and $0\leq h\leq 1$. 
Given $\epsilon>0$, the function $u_\epsilon(x)=|x|^{-\beta}\,\log|x|\,\p(|x|/\epsilon)\,h(|x|)$ belongs to $C_0^\infty(\R^n)$ with support in the ring $\{\epsilon\leq |x|\leq 2\}$ for each $\beta\in \R$ and so $u_\epsilon$ satisfies \eqref{WSI}.
If $\beta=(\tau+n)/q$, we have for $\epsilon<1/2$ that
 \begin{align*}
 \int_E |u_\epsilon(x)|^q\,\omega(x) \,dx
 &\geq
 \int_{E\cap \{2\epsilon\leq |x|\leq 1\}} |x|^{-\beta q}\(\log \frac{1}{|x|}\)^q\,\omega(x) \,dx\\
 &=
 \int_{2\epsilon}^1t^{-\beta q+\tau+n-1}\(\log \frac{1}{t}\)^q\,dt \,\int_{E\cap S^{n-1}}\omega(x)\,dx\\
  &=
  \dfrac{1}{q+1}\,\(\log \dfrac{1}{2\epsilon}\)^{q+1}\,\int_{E\cap \mathbb S^{n-1}}\omega(x)\,dx.
  \end{align*}
 Let us now estimate $\int_E |\nabla u_\epsilon(x)|^p\,\sigma(x) \,dx$ from above.
 We have
 \begin{align*}
 \nabla u_\epsilon (x)&=(-\beta)|x|^{-\beta-1}\dfrac{x}{|x|}\,\log|x|\,\p(|x|/\epsilon)\,h(|x|)
 +
 |x|^{-\beta}\dfrac{1}{|x|}\dfrac{x}{|x|}\p(|x|/\epsilon)\,h(|x|)\\
 &\qquad 
 +|x|^{-\beta}\log|x|\, \p'(|x|/\epsilon)\dfrac{1}{\epsilon}\dfrac{x}{|x|}h(|x|)
 +|x|^{-\beta}\log|x|\, \p\(|x|/\epsilon\)\, h'(|x|)\dfrac{x}{|x|}.
 \end{align*}
 Hence
 \begin{align*}
 |\nabla u_\epsilon (x)|
 &\leq |\beta|\,|x|^{-\beta-1}\,|\log|x||
 \,\chi_{\epsilon\leq |x|\leq 2}(x)
 +
 |x|^{-\beta-1}\,\chi_{\epsilon\leq |x|\leq 2}(x)\\
 &\qquad 
 +\|\p'\|_\infty\,|x|^{-\beta}|\log|x||\, \dfrac{1}{\epsilon}\,\chi_{\epsilon\leq |x|\leq 2\epsilon}(x)
 +|x|^{-\beta}\,|\log|x||\, \|h'\|_\infty\,\,\chi_{1\leq |x|\leq 2}(x)\\
 &\leq
 C_1\,|x|^{-\beta-1}\(1+|\log |x||\)\,\chi_{\epsilon\leq |x|\leq 2}(x),
  \end{align*}
with $C_1>0$ a constant depending only on $\beta,\|h'\|_\infty$, and $\|\p'\|_\infty$.
Therefore 
\begin{align*}
\int_E |\nabla u_\epsilon(x)|^p\,\sigma(x)\,dx
&=
\int_{E\cap \{\epsilon\leq |x|\leq 2\}} |\nabla u_\epsilon(x)|^p\,\sigma(x)\,dx\\
&\leq 
C_1^p\,\int_{E\cap \{\epsilon\leq |x|\leq 2\}}
|x|^{-(\beta+1)p}\(1+|\log |x||\)^p\,\sigma(x)\,dx
:=C_1^p\,I.
\end{align*}
Integrating in polar coordinates
\[
I=\int_\epsilon^2 t^{-(\beta+1)p+n-1+\alpha}\(1+|\log t|\)^p\,dt\,\int_{E\cap S^{n-1}}\sigma(x)\,dx,
\]
and from \eqref{eq:dimensional balance} and the choice of $\beta$, the exponent $-(\beta+1)p+n-1+\alpha=-1$. So
\begin{align*}
I&=C\,\int_\epsilon^2 t^{-1}\(1+|\log t|\)^p\,dt
\leq 2^p\,C\,\int_\epsilon^2 t^{-1}\(1+|\log t|^p\)\,dt\\
&=2^p\,C\,\(\int_\epsilon^2 t^{-1}\,dt +\int_\epsilon^2 t^{-1} |\log t|^p\,dt\)=2^p\,C\,\(I_1+I_2\).
\end{align*}
Now $I_1=\log(2/\epsilon)$ and
\begin{align*}
I_2&=\int_\epsilon^1 t^{-1} |\log t|^p\,dt+\int_1^2 t^{-1} |\log t|^p\,dt\\
&=
\int_\epsilon^1 t^{-1} \(\log \dfrac{1}{t}\)^p\,dt+c_p
=
\dfrac{1}{p+1} \(\log \dfrac{1}{\epsilon}\)^{p+1}+c_p.
\end{align*}
We then obtain the estimate
\[
\int_E |\nabla u_\epsilon(x)|^p\,\sigma(x)\,dx
\leq
C_p\,\(\(\log \dfrac{1}{\epsilon}\)^{p+1}+\log \dfrac{2}{\epsilon}+1\)
\]
and since $u_\epsilon$ satisfies \eqref{WSI} it then follows from the estimate of the $L^q$-norm of $u_\epsilon$ that 
\[
\(\log \dfrac{1}{2\epsilon}\)^{1+\frac{1}{q}}
\leq 
C\, \(\(\log \dfrac{1}{\epsilon}\)^{p+1}+\log \dfrac{2}{\epsilon}+1\)^{1/p},
\]
for all $\epsilon$ small with $C$ independent of $\epsilon$.
Since the dominant term, as $\epsilon \to 0$, on the right hand side of the last inequality is $\(\log \dfrac{1}{\epsilon}\)^{1+\frac{1}{p}}$, we then get that $p\leq q$ which together with \eqref{eq:dimensional balance} yields the inequality on right hand side of \eqref{eq:p-range}.

It remains to prove that \eqref{eq:p-range-bis} is necessary for \eqref{WSI}. 
Fix $y_0\in E\cap \mathbb S^{n-1}$. The idea is to construct a test function supported on a small ball whose center is along the direction $y_0$ that tends to infinity. Since $E$ is open, we may pick $r_0>0$ small enough with $\overline B_{r_0}(y_0)\subset E$. 
Let $$m_0:=\min_{\overline B_{r_0}(y_0)}\omega>0,\ \ M_0:=\max_{\overline B_{r_0}(y_0)}\sigma>0,$$ fix a function $v\in C_0^\infty(B_1)\setminus\{0\}$, and define $u_\delta(x)=v(x-\delta y_0)$ for  $\delta>0$.  Note that $u_{\delta}  \in C_0^{\infty}\(B_1(\delta y_0)\)$.  
Observe also, that if $\delta\,r_0>1$, then $B_1\(\delta y_0\)\subset B_{\delta r_0}\(\delta y_0\)\subset \delta(\overline B_{r_0}(y_0))\subset E$, since $E$ is a cone.
Therefore, by (\ref{eq:homogeneity of weights}) and the definitions of $m_0,M_0$ it follows that
 \begin{eqnarray*}
 \int_E |u_\delta(x)|^q\,\omega(x) \,dx&=&\int_E |v(x-\delta y_0)|^q\,\omega(x) \,dx=\int_{B_1(\delta y_0)} |v(x-\delta y_0)|^q\,\omega(x) \,dx\\&=&\int_{B_1} |v(y)|^q\,\omega(y+\delta y_0) \,dy=\delta^\tau\int_{B_1} |v(y)|^q\,\omega\(\frac{y}{\delta}+ y_0\) \,dy\\&\geq & \delta^\tau m_0\int_{B_1} |v(y)|^q \,dy.
 \end{eqnarray*}
In a similar way we obtain
  \begin{eqnarray*}
 	\int_E |\nabla u_\delta(x)|^p\,\sigma(x) \,dx&=&\int_E |\nabla v(x-\delta y_0)|^p\,\sigma(x) \,dx=\int_{B_1(\delta y_0)} |\nabla v(x-\delta y_0)|^p\,\sigma(x) \,dx\\&=&\int_{B_1} |\nabla v(y)|^p\,\sigma(y+\delta y_0) \,dy=\delta^\alpha\int_{B_1} |\nabla v(y)|^p\,\sigma\(\frac{y}{\delta}+ y_0\) \,dy\\&\leq & \delta^\alpha M_0\int_{B_1} |\nabla v(y)|^p \,dy.
 \end{eqnarray*}
 Accordingly, if we plug in the function $u_\delta$ into \eqref{WSI} with $\delta>1/r_0$, and use the last two estimates it follows that
$$\(\delta^\tau m_0\int_{B_1} |v(y)|^q \,dy\)^\frac{1}{q}\leq K_0\(\delta^\alpha M_0\int_{B_1} |\nabla v(y)|^p \,dy\)^\frac{1}{p}.$$
Letting $\delta\to \infty$, we obtain that $\dfrac{\tau}{q}\leq \dfrac{\alpha}{p}$. 
Now, using once again the dimensional balance condition \eqref{eq:dimensional balance}, we see that  the last inequality is equivalent to  
 \eqref{eq:p-range-bis}.

\subsection{Sobolev inequalities in the Heisenberg group}\label{sec:final comments and open questions}

In this part we consider the connection between weighted Sobolev inequalities in Euclidean cones and Sobolev inequalities in Heisenberg groups. Our original purpose was in fact to prove Sobolev inequalities in the Heisenberg group with sharp constants. 

For simplicity, we consider the first Heisenberg  group $\mathbb H^1$. Let us recall that $\mathbb H^1 = \R^3$ is endowed with its group operation given by
$$ (x,y,z) \ast (x',y',z') := \(x+x', y +y', z+z'+ \frac{1}{2}(xy'-yx')\).$$
In this setting one considers the {\it left invariant horizontal vector fields}  given by 
$X= \partial_x -\frac{1}{2} y \partial_z$ and $Y= \partial_y + \frac{1}{2} x \partial_z$ and the associated horizontal gradient  $\nabla_H u = X(u) X + Y(u) Y$.  For $p\in [1,4)$ we consider the Sobolev inequality
\begin{equation}\label{Sobolev-Heisenberg}
\biggl(\int_{\mathbb H^1}|u|^q\biggr)^{1/q}\leq C_p\biggl(\int_{\mathbb H^1}|\nabla_H u|^p\biggr)^{1/p},\ u\in C_0^\infty(\mathbb H^1),
\end{equation}
where $C_p>0$ and $q=\frac{4p}{4-p}$ is the Sobolev exponent given by scaling with Heisenberg dilations, where we have used the norm of the horizontal gradient for a function $u\in C_0^\infty(\mathbb H^1)$ given by 
$
|\nabla_Hu|=\sqrt{(Xu)^2+(Yu)^2}. 
$
In the following let us consider the class of functions $u$ that are  \textit{axially symmetric}: 
$$
u(x,y,z)=w(z,x^2+y^2).
$$
%
Then, by changing variables,  the Heisenberg Sobolev inequality (\ref{Sobolev-Heisenberg}) becomes equivalent to the Euclidean weighted Sobolev inequality 
\begin{equation} \label{eq:heisenberg-sobolev}
\biggl(
 \int_\R \int_0^\infty   w^q(x_1,x_2)dx_1 dx_2\biggr)^{1/q}\leq C_p\biggl( \int_\R  \int_0^\infty  \,|\nabla w|^p(x_1, x_2)  x_2^{p/2}dx_1 dx_2 \biggr)^{1/p}.
\end{equation} 
This problem fits well into the framework of this paper.  In fact, with our setup,  the open convex cone we are working with is $E=\mathbb R\times (0,\infty)$,  the weights being $\omega=1$ and $\sigma(x_1,x_2)=x_2^{p/2}$ for $(x_1,x_2)\in E$; accordingly, $\tau=0$ while $\alpha=p/2,$ and the fractional dimension is $n_a=4.$  Applying Theorem {\rm \ref{main-theorem-1}/(i) and Theorem \ref{main-theorem-2}/(i)} we obtain that \eqref{eq:heisenberg-sobolev} holds with constant
	\[  C_p=  \left\{ \begin{array}{lll}
	\displaystyle \frac{3p}{4-p} \, \inf_{\int_E v(y)\,dy=1, v\in C_0^\infty(\mathbb R^2), v\geq 0}
	\dfrac{\(\int_E v(y)\,|y|^{\frac{p}{p-1}}\,dy\)^{\frac{p-1}{p}}}{\int_E v(y)^{\frac{3}{4}}(y_2)^{\frac{1}{2}}\,dy}& {\rm if} &   p\in (1,4), \\
\frac{5\pi^\frac{5}{4}}{2^\frac{13}{4}\Gamma^2\(\frac{3}{4}\)}	& {\rm if} &   p=1.\\
	\end{array}\right. \]

We do not know how to compute the explicit value of the constant $C_p$ for $p>1$. On the other hand, it is clear that this constant is not the sharp one for the inequality \eqref{eq:heisenberg-sobolev}, see Theorem \ref{main-theorem-1-equality}. It is in fact  still an open question to determine the sharp constant in both inequalities \eqref{Sobolev-Heisenberg} and  \eqref{eq:heisenberg-sobolev} for general values of $p$. 
When $p=2$, a sharp Sobolev inequality in the Heisenberg setting is due to Jerison and Lee \cite{JL} and it was proved also by a different method by Frank and Lieb \cite{FL}. Inequality (\ref{Sobolev-Heisenberg}) for $p=1$ is equivalent with Pansu's isoperimetric inequality; the Pansu's optimal constant is claimed to be 
$C_{\rm opt}=\frac{3^\frac{3}{4}}{4\sqrt{\pi}}<C_1$. There are several partial results related to Pansu's conjecture; we refer to the monograph of Capogna, Danielli, Pauls and Tyson \cite{CDPT} for a detailed account on this subject. 

\subsection{Open questions} We list here a few open problems related to results of this paper.  

\subsubsection{Sharp Sobolev inequalities with different weights.}
While the explicit computation of the constant $K_0$ in the statement of Theorem \ref{main-theorem-2} can be done by a direct calculation of the integrals in the expression of $K_0$, the computation of the value of $K_0$ in the statement of Theorem \ref{main-theorem-1}, even in case of simple weights, is a non-trivial matter. 

Motivated mainly by the Heisenberg setting from Section \ref{sec:final comments and open questions}, it would be interesting to further investigate whether the method of optimal transport can be used to obtain sharp constants in weighted Sobolev inequalities with different weights.

Another challenging question is to obtain Gagliardo-Nirenberg type inequalities with different weights. We notice that sharp Gagliardo-Nirenberg inequalities have been established by Del Pino and Dolbeault \cite{delPinoDolbeault-1, delPinoDolbeault-2} in the unweighted form, and by Lam \cite{Lam-1, Lam-2}  for identical homogeneous weights. 

 \subsubsection{Condition {\rm \ref{item:C_0}} and Bakry-\'Emery curvature-dimension condition.}
When $\omega= \sigma$, condition {\rm \ref{item:C_0}} is equivalent to the concavity of 
$\omega^{\frac{1}{\alpha}}$ that in turn characterizes the fact that the metric-measure space $(\R^n, d_E, \omega dx)$ satisfies the Bakry-\'Emery curvature-dimension condition $CD(0, n +\alpha)$ (see \cite[Remark 1.4]{Cabre-Ros-Oton-Serra} for details). 
Here, $d_E$ and $\omega dx$ are the usual Euclidean metric and the measure whose density with respect to the Lebesgue measure is $\omega$, respectively. It would be an interesting problem to find a geometric interpretation of condition {\rm \ref{item:C_0}} in terms of generalized curvature conditions of metric-measure spaces in the spirit of \cite{BGL, LV, Milman, S1, S2}. 

\subsubsection{On Muckenhoupt-Wheeden's weighted inequality}  To give a broader view, we close the paper mentioning earlier Sobolev inequalities for two weights in all space proved by  
Muckenhoupt and Wheeden \cite{MW} via fractional integration. They proved the following result: if $0<\gamma<n$, $1<p<n/\gamma$, and $\dfrac{1}{q}=\dfrac{1}{p}-\dfrac{\gamma}{n}$, then 
\begin{equation}\label{eq:weighted fractional integral inequality}
\|T_\gamma f \,V\|_{L^q(\mathbb R^n)}\leq C\, \|f\, V\|_{L^p(\mathbb R^n)}
\end{equation}
for all functions $f$ if and only if there exists $K>0$ such that 
\begin{equation}\label{eq:necessary and sufficient condition for V}
\(\fint_Q V(x)^q\,dx\)^{1/q}\(\fint_Q V(x)^{-p'}\,dx\)^{1/p'}\leq K
\end{equation}
for all cubes $Q\subset \R^n$. 
This condition is equivalent to $V^q$ belongs to the Muckenhoupt class $A_r$, with $r=1+q/p'$.
Here $T_\gamma$ stands for the fractional integral of order $\gamma$ given by 
\[
T_\gamma f(x)=\int_{\R^n}\dfrac{f(y)}{|x-y|^{n-\gamma}}\,dy.
\]
Using a representation formula of functions in terms of the fractional integral of order one of its derivatives, Muckenhoupt and Wheeden \cite[Theorem 9]{MW} deduced from \eqref{eq:weighted fractional integral inequality} when $\gamma=1$ a weighted Sobolev inequality of the form
\[
\|f\,V\|_{L^q(\mathbb R^n)}\leq C\,\(\|f\,V\|_{L^p(\mathbb R^n)}+\||\nabla f|\,V\|_{L^p(\mathbb R^n)}\).
\]

We notice that Muckenhoupt-Wheeden's condition and our condition {\rm \ref{item:C_1}} are rather  independent from each other. Indeed, if $V:\mathbb R^n\to (0,\infty)$ is any differentiable, homogeneous function of degree $\alpha\in \mathbb R$ and
$\omega(x)=V(x)^q$, $\sigma(x)=V(x)^p$ for every $x\in E=\R^n$, then $n_a=n$ and  $\sup_{x\in \mathbb R^n}\dfrac{\omega(x)^{1/q}}{\sigma(x)^{1/p}}=1.$ We observe that the pair $(\omega,\sigma)$ satisfies inequality \eqref{c-1-feltetel} in condition {\rm \ref{item:C_1}}
if and only if $V\equiv c$ for some $c>0$. Hence with this choice of the weights,  conditions  \eqref{eq:necessary and sufficient condition for V} and {\rm \ref{item:C_1}} are simultaneously satisfied in the whole $\mathbb R^n$ if and only if both weights are constant, i.e., $\omega(x)=c^q$, $\sigma(x)=c^p$, $x\in \mathbb R^n$.

Since our results are on cones, they are not in general comparable to these but nevertheless they raise the following methodological question:
is it possible to prove inequality \eqref{eq:weighted fractional integral inequality}, for example when $V=1$, by using optimal transport? 
This would be the analogue of the problem solved in \cite{CENV} for fractional integrals. In particular, it may give optimal constants and extremal functions for the fractional integral inequality as in Lieb \cite[Theorem 2.3]{L}.

\end{document}